\begin{document}

\newtheorem{theorem}{Theorem}
\newtheorem{proposition}{Proposition}
\newtheorem{lemma}{Lemma}
\newtheorem{corollary}{Corollary}
\newtheorem{definition}{Definition}
\newtheorem{remark}{Remark}
\newcommand{\tex}{\textstyle}
\numberwithin{equation}{section} \numberwithin{theorem}{section}
\numberwithin{proposition}{section} \numberwithin{lemma}{section}
\numberwithin{corollary}{section}
\numberwithin{definition}{section} \numberwithin{remark}{section}
\newcommand{\ren}{\mathbb{R}^N}
\newcommand{\re}{\mathbb{R}}
\newcommand{\n}{\nabla}
\newcommand{\p}{\partial}
\newcommand{\iy}{\infty}
\newcommand{\pa}{\partial}
\newcommand{\fp}{\noindent}
\newcommand{\ms}{\medskip\vskip-.1cm}
\newcommand{\mpb}{\medskip}
\newcommand{\AAA}{{\bf A}}
\newcommand{\BB}{{\bf B}}
\newcommand{\CC}{{\bf C}}
\newcommand{\DD}{{\bf D}}
\newcommand{\EE}{{\bf E}}
\newcommand{\FF}{{\bf F}}
\newcommand{\GG}{{\bf G}}
\newcommand{\oo}{{\mathbf \omega}}
\newcommand{\Am}{{\bf A}_{2m}}
\newcommand{\CCC}{{\mathbf  C}}
\newcommand{\II}{{\mathrm{Im}}\,}
\newcommand{\RR}{{\mathrm{Re}}\,}
\newcommand{\eee}{{\mathrm  e}}
\newcommand{\LL}{L^2_\rho(\ren)}
\newcommand{\LLL}{L^2_{\rho^*}(\ren)}
\renewcommand{\a}{\alpha}
\renewcommand{\b}{\beta}
\newcommand{\g}{\gamma}
\newcommand{\G}{\Gamma}
\renewcommand{\d}{\delta}
\newcommand{\D}{\Delta}
\newcommand{\e}{\varepsilon}
\newcommand{\var}{\varphi}
\newcommand{\lll}{\l}
\renewcommand{\l}{\lambda}
\renewcommand{\o}{\omega}
\renewcommand{\O}{\Omega}
\newcommand{\s}{\sigma}
\renewcommand{\t}{\tau}
\renewcommand{\th}{\theta}
\newcommand{\z}{\zeta}
\newcommand{\wx}{\widetilde x}
\newcommand{\wt}{\widetilde t}
\newcommand{\noi}{\noindent}
\newcommand{\uu}{{\bf u}}
\newcommand{\xx}{{\bf x}}
\newcommand{\yy}{{\bf y}}
\newcommand{\zz}{{\bf z}}
\newcommand{\aaa}{{\bf a}}
\newcommand{\cc}{{\bf c}}
\newcommand{\jj}{{\bf j}}
\newcommand{\ggg}{{\bf g}}
\newcommand{\UU}{{\bf U}}
\newcommand{\YY}{{\bf Y}}
\newcommand{\HH}{{\bf H}}
\newcommand{\GGG}{{\bf G}}
\newcommand{\VV}{{\bf V}}
\newcommand{\ww}{{\bf w}}
\newcommand{\vv}{{\bf v}}
\newcommand{\hh}{{\bf h}}
\newcommand{\di}{{\rm div}\,}
\newcommand{\ii}{{\rm i}\,}
\newcommand{\inA}{\quad \mbox{in} \quad \ren \times \re_+}
\newcommand{\inB}{\quad \mbox{in} \quad}
\newcommand{\inC}{\quad \mbox{in} \quad \re \times \re_+}
\newcommand{\inD}{\quad \mbox{in} \quad \re}
\newcommand{\forA}{\quad \mbox{for} \quad}
\newcommand{\whereA}{,\quad \mbox{where} \quad}
\newcommand{\asA}{\quad \mbox{as} \quad}
\newcommand{\andA}{\quad \mbox{and} \quad}
\newcommand{\withA}{,\quad \mbox{with} \quad}
\newcommand{\orA}{,\quad \mbox{or} \quad}
\newcommand{\atA}{\quad \mbox{at} \quad}
\newcommand{\onA}{\quad \mbox{on} \quad}
\newcommand{\ef}{\eqref}
\newcommand{\mc}{\mathcal}
\newcommand{\mf}{\mathfrak}

\newcommand{\ssk}{\smallskip}
\newcommand{\LongA}{\quad \Longrightarrow \quad}
\def\com#1{\fbox{\parbox{6in}{\texttt{#1}}}}
\def\N{{\mathbb N}}
\def\A{{\cal A}}
\newcommand{\de}{\,d}
\newcommand{\eps}{\varepsilon}
\newcommand{\be}{\begin{equation}}
\newcommand{\ee}{\end{equation}}
\newcommand{\spt}{{\mbox spt}}
\newcommand{\ind}{{\mbox ind}}
\newcommand{\supp}{{\mbox supp}}
\newcommand{\dip}{\displaystyle}
\newcommand{\prt}{\partial}
\renewcommand{\theequation}{\thesection.\arabic{equation}}
\renewcommand{\baselinestretch}{1.1}
\newcommand{\Dm}{(-\D)^m}

\title
{\bf Countable families of solutions of a limit stationary
semilinear fourth-order  Cahn--Hilliard
 equation I. Mountain pass and Lusternik--Schnirel'man patterns in $\ren$}

\author{P.~\'Alvarez-Caudevilla, J.D.~Evans, and V.A.~Galaktionov}

\address{Universidad Carlos III de Madrid,
Av. Universidad 30, 28911-Legan\'es, Spain -- Work phone number:
+34-916249099} \email{pacaudev@math.uc3m.es}

\address{Department of Mathematical Sciences, University of Bath,
 Bath BA2 7AY, UK -- Work phone number: +44 (0)1225 386994 }
\email{masjde@bath.ac.uk}

\address{Department of Mathematical Sciences, University of Bath,
 Bath BA2 7AY, UK -- Work phone number: +44 (0)1225826988}
\email{masvg@bath.ac.uk}

\keywords{stationary Cahn--Hilliard  equation, variational
setting, non-unique oscillatory solutions,  countable family of
critical points}

\thanks{This work has been partially supported by the Ministry of Economy and Competitiveness of
Spain under research project MTM2012-33258.}

\subjclass{35G20, 35K52}

\date{\today}





\begin{abstract}
Solutions of the stationary semilinear Cahn--Hilliard equation
 $$
 -\Delta^2 u  - u -\Delta(|u|^{p-1}u)=0 \inB \ren, \quad \mbox{with} \quad p>1,
 $$
which are exponentially decaying at infinity, are studied. Using
the Mounting Pass Lemma allows us the determination of two different solutions. On
the other hand, the application of Lusternik--Schnirel'man (L--S) Category
Theory
shows the existence of, at least, a countable family of solutions.

However, through numerical methods it is shown that  the
whole set of solutions, even in 1D, is much wider. This suggests that,
actually,  there exists, at least, a countable set of countable
families of solutions, in which only the first one can be
obtained by the L--S min-max approach.

\end{abstract}

\maketitle




\section{Introduction and motivation for main problems}
 \label{S1}

\subsection{Models and preliminaries}

\noindent
This paper studies some multiplicity properties of the steady-states of
the following {\em fourth-order parabolic equation of the
Cahn--Hilliard (C--H) type}:
\begin{equation}
\label{11}
    \tex{u_{t} =-\Delta^2 u  - u -\Delta(|u|^{p-1}u)\, \quad
    \hbox{in}\quad \re^N\times \re_+ \whereA p>1.}
\end{equation}
The problem is completed with bounded
smooth initial data,
\be
\label{13}
    \tex{
    u(x,0)=u_0(x) \quad \hbox{in} \quad \ren
    .}
\ee
 Assuming that data $u_0(x)$ are sufficiently fast exponentially
 decaying at infinity, the same behaviour holds for the unique classic solution
of \ef{11}, at least locally in time, since, for $p>1$, $u(x,t)$
may blow-up in finite time; see key references and results in
\cite{PV} and \cite{EGW1}.

\ssk

The classic \emph{Cahn--Hilliard equation}  describes
the dynamics of pattern formation in phase transition in alloys,
glasses, and polymer solutions.
When a binary solution is cooled
sufficiently, phase separation may occur and then proceed in two
ways: either nucleation, in which nuclei of the second phase
appear randomly and grow, or, in the so-called spinodal
decomposition, the whole solution appears to nucleate at once and
then periodic or semi-periodic structures appear. Pattern
formation resulting from phase transition has been observed in
alloys, glasses, and polymer solutions. Parallelly, these types of equations
possesses a great interest in biology and, after certain transformations (see below)
as prototype for nerve conduction in the form of a FitzHugh--Nagumo  system
 (cf. \cite{FiguMit,KlassMit}).

\emph{Cahn--Hilliard equations-type} such as \eqref{11} have been studied by many authors during the last
decades. Among some of the works where several aspects related to these equations are \cite{AP, NC, UnCh, NC08, ANC1}.
Additionally, one can check the surveys in \cite{EGW1,GVSur02}, where necessary aspects
 of global existence and blow-up of solutions for \ef{11} are
 discussed in sufficient detail.

From the mathematical point of view, the non-stationary equation
\ef{11} involves a fourth order elliptic operator and it contains
a negative viscosity term. In a more general (and applied)
setting, the unknown function is a scalar $u=u(x,t)$, $x\in \ren$,
$t\in\re_+$, and the equation reads
\begin{equation}
\label{ch1}
    u_{t} - \D K(u)=0\quad \hbox{in} \quad \ren \times \re_+\,
    \whereA
    K(u):= -\nu \D u + f(u), \quad \nu >0.
\end{equation}
Then, in general, the function $f(u)$ is a polynomial of the order
$2p-1$,
\begin{equation*}
 \tex{
    f(u):= \sum_{j=1}^{2p-1} a_j u^j, \quad p\in \N,\;\; p\geq 2.
    }
\end{equation*}
In particular, the classic Cahn--Hilliard equation corresponds to
the case $p=2$ and
\begin{equation*}
    f(u):= -\eta u+ \mu u^3, \quad \eta,\mu >0.
\end{equation*}

This equation has been extensively
studied in the past years but many questions still remain
unanswered, especially in relation to multiplicity problems.


\subsection{Variational approach and main results}


Thus,  we concentrate on the analysis of multiplicity results
for the fourth-order elliptic stationary \emph{Cahn--Hilliard}
equation
\begin{equation}
\label{maineq}
    \tex{-\Delta^2 u - u - \Delta (|u|^{p-1}u)=0\, \quad
    \hbox{in}\quad \re^N \whereA p>1.}
\end{equation}
Note that \eqref{maineq} is not variational in $L^2(\re^N)$,
though it is variational in $H^{-1}(\re^N)$. Hence, multiplying
\eqref{maineq} by $(-\D)^{-1}$,
 we obtain an elliptic equation with a non-local
 operator of the form
  \be
 \label{nonloc}
    -\D u+(-\D)^{-1} u -|u|^{p-1} u=0.
\ee
 Here, as customary,
$(-\D)^{-1}u=v$, if
 $$
 -\D v=u \,\,\, \mbox{in} \,\,\, \re^N,
  \quad v(x) \to 0 \asA
 x \to \iy.
 $$
This yields the following $C^1$-functional associated with
\eqref{nonloc}:
\begin{equation}
\label{funn}
    \tex{ \mc{F}(u):=\frac{1}{2} \int\limits_{\re^N} |\nabla u|^2 + \frac{1}{2} \int\limits_{\re^N} |(-\D)^{-\frac 1 2} u|^2 -
     \frac{1}{p+1} \int\limits_{\re^N}
    |u|^{p+1},
    }
\end{equation}
Solutions of the equation \eqref{nonloc} are then obtained as
critical points of the functional \eqref{funn}.

The non-local operator $(-\D)^{-1}$ is  a positive linear integral
operator from $L^q(\re^N)$ to $L^2(\re^N)$, within the Sobolev's range, i.e., $1\leq q\leq \frac{2N}{N-2}$. 
Moreover, the operator $(-\D)^{-\frac 1 2}$
can be correctly  defined as the square root of the operator
$(-\D)^{-1}$ and it will also be referred to as a non-local linear operator.

Also, since the problem is set in $\re^N$ we are defining this
operator in a class of exponentially decaying functions; see below
the details and conditions to have the
 weak expression of the problem with these exponentially decay solutions.

A similar fourth-order
 problem was studied in
  \cite{PV}, where further references can be found.
 However, in \cite{PV} the stationary equation \eqref{maineq} was considered
 in a bounded
 smooth
 domain $\O \in \ren$, with  homogeneous Navier-type boundary conditions 
 \begin{equation}
\label{12}
   \tex{u=\D u=0 \quad \mbox{on} \quad \partial \O.}
\end{equation}
In particular, it was shown that this
 problem \eqref{maineq}, \ef{12}  admits a countable  family of solutions (critical points/values).
  Moreover, in \cite[\S~6]{GMPSob2} for a different variational
  fourth-order problem in $\re$, it was shown that a wider
  countable set
  of countable families of
 solutions can be expected, where only the first infinite family is the Lusternik--Schnirel'man one.

  Hence, due to the analysis
 performed in \cite{GMPSob,GMPSob2} we need to study, in a general multidimensional
 geometry, existence and multiplicity for the elliptic equation \eqref{maineq}
 in a class of functions properly decaying at infinity (in fact exponentially),
\be
\label{infcon}
    \tex{\lim_{|x| \rightarrow \infty} u(x)=0.}
\ee
Consequently, to get the results obtained here one can proceed following two different approaches.

Firstly, bearing in mind solutions satisfying \ef{infcon}, with a
fast decay at infinity, one can choose a sufficiently large radius $R>0$
for the ball $B_R$ and consider the variational problem for the
functional $\mc{F}(u)$ denoted by \eqref{funn} in $W_0^{1,2}$ and
assuming Dirichlet boundary conditions on $S_R=\p B_R$. Thus, both
spaces $L^2(B_R)$ and $L^{p+1}(B_R)$ are compactly imbedded into
$W_0^{1,2}(B_R)$ in the subcritical Sobolev's range
 \be
  \label{sub1}
   \tex{
    1<p \le p_*= 1 + \frac {N+2}{N-2} \big(\equiv \frac {2N}{N-2}=p_{\rm S}\big), \quad N \ge 3
     \quad (p_*=+\iy\,\,\,\mbox{for}\,\,\,N=1,\,2).
     }
 \ee
In other words,
\be
\label{comem}
     \tex{W_0^{1,2}(B_R) =H_0^1(B_R)  \hookrightarrow L^{p+1}(B_R) \Leftrightarrow 1<p < p_*.}
\ee Note that, here, for the fourth-order elliptic operator in
\ef{maineq}, $p_{\rm S} =\frac {2N}{N-2}$, with $N\geq 3$,
because this operator has the representation
 $$
  \tex{-\D^2
u-\D(|u|^{p-1}u)=-\D(\D u+ |u|^{p-1}u),}
 $$
 so that, the necessary embedding features are governed by a standard second-order one
 $$\D u+|u|^{p-1}u.$$
The next step is passing to the limit as $R \to +\iy$, by using some
uniform (in $R$) bounds on such families of solutions in $B_R$.
Since the category of the functional subset increases without
bound in such a limit, eventually, we then expect to arrive at, at
least, a countable family of various critical values/points.

However, here we perform a variational study
directly in $\ren$, which was done previously for many
fourth-order ODEs and elliptic equations; see \cite{KKV, KKVV, PT}
as key examples (though those equations, mainly, contain coercive
operators, with ``non-oscillatory" behaviour at infinity). 

Namely,
by a linearized analysis we first check that equation
\ef{maineq} provides us with a sufficient ``amount" of exponentially
decaying solutions at infinity. Obviously, in any bounded class of
such functions, and in a natural functional viewing, since, loosely
speaking, nothing happens at infinity (effectively, the solutions
vanish there), the variational problem can be treated as the one
in a bounded domain. So that the embedding in \ef{comem} comes in
charge in some sense.

\ssk

\begin{remark}\label{rem:compact}
Throughout this paper we shall consider radial solutions.
Hence, we take into consideration a well known result  about continuous  Sobolev's embedding (see for instance \cite{adams}),
\be
\label{compact}
\tex{E \hookrightarrow L^{p}(\re^N),\quad\hbox{with}\quad
1<p \le p_*,}
\ee
which are compact replacing $E$ by the radial subspace $W_{\rm rad}^{2,2}(\re^N)$ and  if in addition $2\le N$ and $p<p^*$ (see \cite{Lions-JFA82})
where
\be
\label{sobo12}
\tex{p_*=1 + \frac {N+4}{N-4} = \frac {2N}{N-4} =p_{\rm S}, \quad N \ge 5 \quad \hbox{and}\quad
     \quad p_*=+\iy\,\,\,\mbox{for}\,\,\,N=1,\, 2,\, 3,\, 4.}
     \ee
     Hence, there is a constant $S_N>0$ such that
$$\|u\|_{L^{p+1}(\re^N)} \leq S_N \|u\|_{W_{\rm rad}^{2,2}(\re^N)}.$$
Note that for the second order case one has the continuous embedding \eqref{compact} in the subcritical Sobolev's range \eqref{sub1}.
\end{remark}

\vspace{0.2cm}

Let us briefly summarize what we obtain here.
 First, we perform an analysis based on the application of the Mountain
Pass Theorem in order to ascertain the existence of one solution and, additionally, the existence of more than one 
for the equation \eqref{maineq}. To ascertain the existence of a second solution we use the
auxiliary equation
\be
 \label{nonlocMin}
    -\D u+(-\D)^{-1} u - |u+u^*|^{p-1} |u+u^*| +|u^*|^{p-1} u^*=0,
\ee
 where $u^*$ represents a solution of the problem  \eqref{nonloc}. Note that, since $u^*$ is a solution of \eqref{nonloc} if 
 $u$ is a solution of \eqref{nonlocMin} then, $u+u^*$ will be also a solution of \eqref{nonloc}.

  Thus, by construction and proving the existence of a solution for the equation
  \eqref{nonlocMin},
 we find the existence of a second solution for \eqref{nonloc} applying again a Mountain Pass argument.
Indeed, performing this analysis and thanks to the equivalence between
\eqref{maineq} and \eqref{nonloc}, we finally obtain the existence
of at least a second solution for our main equation
\eqref{maineq}.

Thus, we state (proved in Section\;\ref{SM}) the following.

\begin{theorem}
Suppose $N\geq 2$ and radial solutions in $W_{\rm rad}^{1,2}(\re^N)$ with exponential decay. 
Then, the non-local equation \eqref{nonloc} possesses at least two solutions in $W_{\rm rad}^{1,2}(\re^N)$.
\end{theorem}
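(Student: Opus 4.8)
The plan is to realise both solutions as critical points of the $C^1$ energy $\mc{F}$ in \eqref{funn}, working on the radial energy space $E:=W^{1,2}_{\rm rad}(\ren)$ with the norm $\|u\|^2:=\int_{\ren}|\n u|^2+\int_{\ren}|(-\D)^{-\frac12}u|^2$. First I would record that $(-\D)^{-\frac12}$ is a positive, bounded, self-adjoint operator on the class of exponentially decaying functions described in the introduction, so that the quadratic part of $\mc{F}$ is exactly $\frac12\|u\|^2$ and $\mc{F}\in C^1(E)$; the term $\frac{1}{p+1}\int_{\ren}|u|^{p+1}$ is well defined and weakly continuous thanks to the \emph{compact} radial Sobolev embedding $E\hookrightarrow\hookrightarrow L^{p+1}(\ren)$ recalled in Remark \ref{rem:compact} in the subcritical range \eqref{sub1}.

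For the first solution I would verify the Mountain Pass geometry for $\mc{F}$. One has $\mc{F}(0)=0$; since $p>1$ the term $\frac{1}{p+1}\int_{\ren}|u|^{p+1}$ is of higher order, so on a small sphere $\|u\|=\rho$ one gets $\mc{F}(u)\ge\alpha>0$, while along any fixed ray $t\mapsto tu_0$ the superlinearity forces $\mc{F}(tu_0)\to-\iy$, producing an end point $e$ with $\|e\|>\rho$ and $\mc{F}(e)<0$. The decisive point is the Palais--Smale condition: a (PS) sequence is bounded because $p+1>2$ (an Ambrosetti--Rabinowitz-type inequality comparing $\mc{F}(u)$ and $\langle\mc{F}'(u),u\rangle$), and then the compact embedding of Remark \ref{rem:compact} upgrades the weak limit to a strong $L^{p+1}$ limit, closing (PS). The Mountain Pass Theorem then yields $u^*\in E$ with $\mc{F}'(u^*)=0$ and $\mc{F}(u^*)=c_1>0$, so $u^*\not\equiv0$ solves \eqref{nonloc}.

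For the second solution I would exploit the algebraic identity $\mc{G}(u)=\mc{F}(u+u^*)-\mc{F}(u^*)$, where $\mc{G}$ is the functional whose Euler--Lagrange equation is exactly \eqref{nonlocMin}; this follows on expanding $\mc{F}(u+u^*)$ and using $\mc{F}'(u^*)=0$ to cancel the linear terms against $\int_{\ren}|u^*|^{p-1}u^*\,u$. Critical points $u$ of $\mc{G}$ therefore correspond to solutions $v=u^*+u$ of \eqref{nonloc}, with $u=0$ the trivial one. Running the Mountain Pass scheme for $\mc{G}$ — the same (PS) and compactness analysis transfers almost verbatim — would produce a critical point $\bar u$ at a strictly positive level $\mc{G}(\bar u)>0$, whence $\bar u\not\equiv0$ and $v=u^*+\bar u$ is a genuinely distinct second solution of \eqref{nonloc}, and hence of \eqref{maineq}.

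The hard part will be the Mountain Pass geometry for $\mc{G}$ at the base point $u=0$. Since $u^*$ is itself a Mountain Pass (saddle-type) critical point of $\mc{F}$, its Hessian, given by $\langle\mc{G}''(0)v,v\rangle=\langle\mc{F}''(u^*)v,v\rangle=\|v\|^2-p\int_{\ren}|u^*|^{p-1}v^2$, need not be positive definite, so the separation $\inf_{\|u\|=\rho}\mc{G}(u)>0=\mc{G}(0)$ cannot be read off from the quadratic part alone. Establishing it — together with the strict positivity of the second critical level that guarantees $\bar u\not\equiv0$, and hence $u^*+\bar u\neq u^*$ — is the real crux, and is where the precise variational characterisation of $u^*$ (for instance as a least-energy, Nehari-constrained solution) and the strict superlinearity $p>1$ must be brought to bear.
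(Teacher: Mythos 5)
Your route coincides with the paper's (Section~\ref{SM}): a first Mountain Pass critical point $u^*$ of $\mc{F}$, then a second solution $u^*+\bar u$ obtained as a nonzero critical point of the translated functional --- your $\mc{G}(u)=\mc{F}(u+u^*)-\mc{F}(u^*)$ is precisely the paper's $\mc{J}$ in \eqref{funnM}, whose Euler--Lagrange equation is \eqref{nonlocM}. Your first half (geometry and (PS) for $\mc{F}$ via an Ambrosetti--Rabinowitz comparison with $2<\mu<p+1$, plus the compact radial embedding of Remark~\ref{rem:compact}) matches Lemmas~\ref{lezero}, \ref{leMPT12} and \ref{leMPT22} essentially step for step.

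However, the two steps you defer are genuine gaps, and they are exactly where the paper does its real work. First, your claim that the (PS) analysis ``transfers almost verbatim'' to $\mc{G}$ is not correct: the translated nonlinearity is no longer homogeneous at $0$, so the comparison $(2+\mu)\mc{G}(u_n)-\langle\mc{G}'(u_n),u_n\rangle$ leaves the indefinite quadratic term $-\frac{\mu}{2}\int p|u^*|^{p-1}u_n^2$. The paper controls it by the pointwise inequality $T(s)\ge 0$ of Lemma~\ref{leauxz}(i) (with the specific choice $\mu=\min\{1,p-1\}$) and then absorbs the weighted quadratic term using the first eigenvalue $\l_1^*$ of the weighted problem \eqref{nonlspe} with weight $p|u^*|^{p-1}$, via $\int|\n v|^2+\int|(-\D)^{-\frac12}v|^2\ge \l_1^*\int p|u^*|^{p-1}v^2$ together with the bound $\l_1^*>2$ asserted from Proposition~\ref{Pr.NNN} (Lemma~\ref{leMPT1}). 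Second, the crux you correctly name --- $\inf_{\|u\|=\rho}\mc{G}(u)>0$ despite the possibly indefinite Hessian at $0$ --- is resolved in the paper \emph{not} through a least-energy or Nehari characterisation of $u^*$ (which is never established, and which a mountain-pass critical point need not enjoy), but by the same spectral estimate: Lemma~\ref{leauxz}(ii) gives $H(u^*,u)\le \frac{p}{2}|u^*|^{p-1}u^2+\e|u^*|^{p-1}u^2+C_\e|u|^{p+1}$, whence $\mc{J}(u)\ge \frac12\big(1-\frac{1}{\l_1^*}\big)\|u\|^2-\e C\|u\|^2-C_\e\|u\|_{L^{p+1}(\re^N)}^{p+1}$, and choosing $\e$ small yields the strict local minimum at $0$ and a positive mountain-pass level, hence $\bar u\neq 0$ (Lemma~\ref{leMPT2}). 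So the single missing idea in your proposal is this spectral lower bound $\l_1^*>1$ for the linearization at $u^*$, which substitutes for the Hessian positivity you rightly observed cannot be assumed; without it, neither the geometry nor the (PS) boundedness for $\mc{G}$ closes.
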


 As far as we know, to get the existence of more than one solution, this seems to be the best
  available approach, since,
  for this type of
 higher-order PDEs we have a big lack of classical methodology and PDE theory.

In addition, secondly, we apply a L--S-fibering approach to get
a countable family of solutions (critical points), though without
any detailed information about how they look like. 

Therefore we state, and prove in Section\;\ref{S4}, the following.

\begin{theorem}
Suppose $N\geq 2$ and radial solutions in $W_{\rm rad}^{1,2}(\re^N)$ with exponential decay. Then, 
there is a countable family of solutions for  the non-local equation \eqref{nonloc} of the L--S type.
\end{theorem}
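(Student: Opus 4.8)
The plan is to combine Pohozaev's fibering method with the Lusternik--Schnirel'man (L--S) genus min-max principle on the radial energy space, the compactness being furnished by Remark~\ref{rem:compact}.

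First I would fix the functional setting. Writing $L:=-\D+(-\D)^{-1}$, the quadratic part of $\mc{F}$ is $\frac12\langle Lv,v\rangle=\frac12\int_{\ren}\big(|\n v|^2+|(-\D)^{-1/2}v|^2\big)$. The elementary interpolation bound $\|v\|_{L^2}^2\le\|(-\D)^{-1/2}v\|_{L^2}\,\|\n v\|_{L^2}$ shows that $\langle Lv,v\rangle^{1/2}$ is equivalent to the $H^1$-norm, so the natural energy space is $E:=W^{1,2}_{\rm rad}(\ren)$ (intersected with the exponentially decaying class), on which $\mc{F}$ is an even $C^1$-functional whose critical points solve \ef{nonloc}. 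By Remark~\ref{rem:compact} the embedding $E\hookrightarrow L^{p+1}(\ren)$ is compact in the subcritical range.

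Second I would perform the fibering. Setting $u=rv$ with $r>0$ gives $\mc{F}(rv)=\frac{r^2}{2}\langle Lv,v\rangle-\frac{r^{p+1}}{p+1}\int_{\ren}|v|^{p+1}$, and the stationarity condition $\partial_r\mc{F}(rv)=0$ has the unique positive root $r(v)^{p-1}=\langle Lv,v\rangle/\int_{\ren}|v|^{p+1}$. Substituting back reduces the problem, on the sphere $\mathcal{S}:=\{v\in E:\int_{\ren}|v|^{p+1}=1\}$, to the even functional $v\mapsto\langle Lv,v\rangle$ (up to the monotone power $(p+1)/(p-1)$); critical points of this reduced functional on $\mathcal{S}$ correspond, through $u=r(v)v$, to nontrivial solutions of \ef{nonloc}.

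Third I would run the L--S scheme. With $\mathcal{A}_k$ the family of compact symmetric subsets of $\mathcal{S}$ of Krasnoselskii genus at least $k$, I set $c_k:=\inf_{A\in\mathcal{A}_k}\sup_{v\in A}\langle Lv,v\rangle$ for $k=1,2,\dots$. Since $E$ is infinite-dimensional the genus of $\mathcal{S}$ is infinite, so each $\mathcal{A}_k$ is nonempty and $c_1\le c_2\le\cdots$ is a well-defined nondecreasing sequence. The standard L--S deformation lemma certifies that each $c_k$ is a critical value; if the $c_k$ are distinct they already form a countable family of critical values, while a repetition $c_k=c_{k+1}$ forces the corresponding critical set to have genus $\ge 2$ and hence to be infinite. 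In either case \ef{nonloc} possesses a countable family of antipodal pairs of solutions of the L--S type.

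The hard part is the Palais--Smale condition for the reduced functional. Boundedness of a PS sequence in $E$ is immediate from the norm-equivalence above together with the fixed constraint $\int_{\ren}|v|^{p+1}=1$. For convergence, one extracts a weak limit in $E$ and a strong limit in $L^{p+1}$ via the compact radial embedding of Remark~\ref{rem:compact}; this both keeps the limit on $\mathcal{S}$ and handles the nonlinear term. The one point requiring care is the non-local quadratic form: because $(-\D)^{-1}$ is a positive smoothing operator, the term $\|(-\D)^{-1/2}v\|_{L^2}^2$ is weakly continuous along the sequence, so no compactness is lost there and the form passes to the limit. Once PS is in hand, the L--S machinery applies unchanged and the theorem follows.
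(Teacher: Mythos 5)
Your proposal is correct in outline and reaches the theorem by a route that is recognizably the same L--S philosophy as the paper's, but dual in its implementation. The paper works on the set $\mc{R}_0$ in \eqref{R0}, i.e.\ it fixes the \emph{quadratic} part ${\bf H}(u)=1$ and min-maxes $\mc{F}$ from \eqref{funn} over symmetric compact subsets of genus $\geq\b$ (the values $c_\b$ in \eqref{cat}); the countability of the family is then obtained by estimating the category of $\mc{R}_0$ through the linearized spectral problem \eqref{rho2}, via Lemma \ref{le37} together with Proposition \ref{marce} (infinitely many positive eigenvalues, hence infinite category). You instead fix the \emph{nonlinear} term, constraining to the sphere $\int|v|^{p+1}=1$ and min-maxing the quadratic form $\langle Lv,v\rangle$, and you get infinite genus directly from the infinite-dimensionality of the radial energy space. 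Your version buys something: it is the more classical Krasnosel'skii normalization, the fibering map $r(v)$ is explicitly computable and unique because the functional is a difference of a degree-$2$ and a degree-$(p+1)$ homogeneous term, and it bypasses the paper's Lemma \ref{le37}, whose proof (substituting eigenfunction expansions into the equation) is the least rigorous step of Section \ref{S4}. You also correctly treat $\mc{F}$ as even for all $p>1$ (the term $\int|u|^{p+1}$ is even regardless of the parity of $p$), whereas the paper's Theorem \ref{confam} needlessly assumes $p$ odd. Both routes rest on the same two pillars: compactness of the radial embedding (Remark \ref{rem:compact}, \eqref{compact}) and a PS-type argument (the paper's Lemma \ref{leMPT12}).

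Two of your technical claims are overstated, though both are repairable. First, the asserted \emph{equivalence} of $\langle Lv,v\rangle^{1/2}$ with the $H^1$-norm holds only in one direction: the interpolation bound $\|v\|_{L^2}^2\le\|(-\D)^{-1/2}v\|_{L^2}\|\n v\|_{L^2}$ gives coercivity, $\langle Lv,v\rangle\gtrsim\|v\|_{H^1}^2$, but the reverse fails, since $\|(-\D)^{-1/2}v\|_{L^2}^2=\int|\xi|^{-2}|\hat v|^2$ is not controlled by the $H^1$-norm (for $N=2$ it is infinite whenever $\int v\neq0$, even for exponentially decaying $v$). The correct energy space is the form domain $H^1_{\rm rad}\cap\dot H^{-1}$, which still embeds (compactly) into $L^{p+1}$ and still contains spheres of every finite dimension, so your genus argument survives; the paper sidesteps the same issue by invoking the Hardy--Littlewood--Sobolev bound \eqref{hls} and restricting to exponentially decaying radial functions. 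Second, your claim that $\|(-\D)^{-1/2}v_n\|_{L^2}^2$ is \emph{weakly continuous} along the PS sequence is unjustified: weak continuity of a quadratic form amounts to compactness of the operator on the energy space, which you have not shown and which is not obvious for $(-\D)^{-1/2}$ even in the radial class. Fortunately you do not need it: nonnegative quadratic forms are weakly lower semicontinuous, and strong convergence follows by the standard device of testing the vanishing derivative against $v_n-v$, which yields $\langle L(v_n-v),v_n-v\rangle\to0$ once the nonlinear term has converged via the compact embedding. With these two corrections your argument closes, and it is in fact tighter than the paper's at the category-estimation step.
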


Finally, we apply
advanced numerical methods to describe general ``geometric"
structure of various solutions assuming symmetric for even profiles and anti-symmetric conditions for odd profiles (see below). 
In particular, we introduce some chaotic patterns for equation 
\eqref{maineq} for $p=3$ and $p=2$, showing some profiles that become very chaotic away from the point of symmetry.

These numerical experiments suggest that  the
whole set of solutions, even in 1D, is much wider.  However, these numerical experiments, together with
some analytical approaches and estimates, will be extended and analysed with more detail in \cite{AEGnegII}. Indeed, 
it will be proved there that there exists, at least, a countable set of countable
families of critical points, in which only the first one can be
obtained by the L--S min-max approach.

Also, we observe that performing numerical experiments, 
shooting smoothly from $x=0$ with $u'(0)=u''(0)=u'''(0)=0$ and varying $u(0)$, 
those 
chaotic patterns become more periodic when $u(0)$ increases. Indeed, this fact appears to be sooner for 
$p=3$ than for $p=2$.

Moreover, it should be mentioned that this kind of transition behaviour is seen in similar phase solidification fourth-order equations, 
such as the Kuromoto-Sivashinsky and Swift-Hohenberg equations \cite{PE}, \cite{CG}, as a critical order parameter increases.

\vspace{0.2cm}


\subsection{Previous related results}


Recall again that, in \cite{PV},   existence and multiplicity
results were obtained for the steady-states of the unstable C--H
equation of the form
\be
\label{gameq}
\tex{ -\Delta^2 u + \g u - \Delta (|u|^{p-1}u)=0 \quad
    \hbox{in}\quad \O \quad (p>1),}
 \ee
 for a real parameter $\g$, where the multiplicity essentially
 depended on this parameter, which affected the category of the
 functional subset associated with the principle linear operator.
The analysis was based on variational methods such as the {\em
fibering method}, potential operator theory and
Lusternik--Schnirel'man category-genus theory, and others, such as
homotopy approaches or perturbation theory existence.
Specifically, it was obtained that, depending on
the parameter $\g$, there exists a different number of stationary
solutions, i.e.,
\begin{itemize}
\item If the parameter $\g\leq  K \l_1$,  with $K>0$ a positive constant and $\l_1>0$ the first eigenvalue
of the bi-harmonic operator, i.e., $\D^2 \varphi_1=\l_1
\varphi_1$ with Navier boundary conditions \eqref{12}, then there exists at least one  solution for the
equation \eqref{gameq}, and;

\item When the parameter is greater than the  first eigenvalue of the bi-harmonic
equation $\l_1$, multiplied by the positive constant $K$, then
there will not be any solution at all, if one assumes only
positive solutions. However,
 for oscillatory solutions of changing sign the number of possible solutions increases with the value of the parameter $\g$. In fact, when the parameter
 $\g$ goes to infinity, one has an arbitrarily large number of distinct solutions. 
\end{itemize}

\begin{remark}
Note that the previous distinction in terms of the number of solutions is related to pattern formation of problems such as the Kuramoto-Sivashinsky equation; 
see \cite{Hoy,PE} for further references and details.
\end{remark}

Furthermore, the so-called Mountain Pass Theorem
\cite{AmRa} has been previously applied to find a second solution. Indeed, in
\cite{SatWat} the authors analyzed the existence of a second
solution for the fourth-order elliptic problem
\be
\label{sinpro}
    \left\{\begin{array}{c}
    \tex{ -\Delta^2 u -c_1\D u +c_2 u= u^{p} + k \sum_{i=1}^{m} \a_i \d_{a_i} \quad
    \hbox{in}\quad \mc{D}'(\re^N),}
 \ssk
    \\
    \tex{u(x)>0,\quad u(x)\to 0\quad \hbox{as}\quad |x|\to 0.}
    \end{array}\right.
 \ee
 where $\d_{a_i} $ represents the delta function supported at $a_i\in\re^N$ and
 $$\tex{1<p<\frac{N}{N-4}, \quad c_1^2-4c_2\geq 0,}$$
 and with $N\geq 5$, $k>0$, $m\in\N$, $\a_i>0$, $c_1,c_2>0$.

 Other methods utilized to solve similar problems to \eqref{maineq} such as
 \be
 \label{smp}
 \tex{\D^2 u+\b^2 \D u+u= f(u),}
 \ee
 might be Hamiltonian Methods or the Strong Maximum Principle. The previous equation can be written by
 $$\tex{(\D-\mu_1)(\D-\mu_1)u= f(u),}$$
 where $\mu_1$ and $\mu_2$ are the squares of the roots of the characteristic polynomial
 $$
 \tex{\mu^2+\b^2\mu+1=0,\quad \hbox{given by}\quad \mu_i=\frac{1}{2}\big(-\b^2\pm \sqrt{\b^4-4}\big),\; i=1,2.}
   $$
 Then, if $\b\geq \sqrt{2}$ both roots are real and positive and we can write the equation \eqref{smp} as the system
 \be
\label{sysmp}
    \left\{\begin{array}{l}
    \tex{ \Delta u -\mu_1 u = v,}\\
    \tex{\D v -\mu_2 v =f(u),}
    \end{array}\right.
 \ee
 and apply the Strong Maximum Principle having a positive solution
 $$u(x)>0 \quad \hbox{for}\quad x\in\re^N,$$
 if we assume that $u$ is a homoclinic orbit (see \cite{PT,SanjWei} for any further details with $N=1$).
 Moreover, in this case one can ascertain certain  qualitative properties for that homoclinic orbit, such as the existence of a
 precisely critical point for $u$, a priori
 estimates and the symmetry of $u$ with respect to that critical
 point. However, if $0<\b<\sqrt{2}$, one cannot obtain such a decoupling.

 Also, similar fourth-order equations to \eqref{smp} have been analyzed in \cite{KKV,KKVV}  in the dimensional space
 $\re^4$ and via Hamiltonian methods obtaining the connection between the critical points of the Hamiltonian.

\ssk

 On the other hand, note that problem \eqref{maineq} can be written as the following elliptic
 system:
 \be
 \label{noncosys}
    \left\{\begin{array}{l}
    \tex{ -\Delta u = -v-|u|^{p-1}u,}\\
    \tex{-\D v =u,}
    \end{array}\right.\quad \hbox{in}\quad \re^N,
 \ee
 where
 $\tex{v:=(-\D)^{-1}u,}$
 which gives a different perspective to the problem in hand.
 The solutions of the system \eqref{noncosys} represent the steady-states of a reaction-diffusion system of the form
 \be
 \label{parncsys}
    \left\{\begin{array}{l}
    \tex{u_t -\Delta u = -v-|u|^{p-1}u,}\\
    \tex{v_t-\D v =u,}
    \end{array}\right.\quad \hbox{in}\quad \re^N,
 \ee
 with a great interest in biology and as prototype for nerve conduction in the form of a FitzHugh--Nagumo
  system
 (cf. \cite{FiguMit,KlassMit}). Mathematically, we note
 that system \eqref{noncosys} is weakly coupled.
  However, this coupling is non-cooperative, in other words, the coupled terms have different signs.
 This means that the Maximum Principle is not valid for the system \eqref{noncosys}. For a cooperative
 system where the Maximum Principle can be applied to a very similar system to \eqref{noncosys}
 see \cite{PaNon}.
 
 Basically, so far, one can see that,
 due to the lack of comparison methods,
  Maximum Principle, and several others classical methods in the analysis of higher-order PDEs. Therefore, in general the methodology
 is very limited and restricted to very specific examples.


\subsection{Further extensions}


Our results can be applied to 
other C--H models. For example note
that, for the ``true" fourth-order semilinear operator, the
critical Sobolev's exponent is different:
 \be
 \label{4.1}
 \tex{
 -\D^2 u + |u|^{p-1}u=0 \LongA p_S= \frac {N+4}{N-4} \quad
 \mbox{for} \quad N \ge 5.
 }
 \ee
Obviously, a critical Sobolev's range as in \ef{4.1} occurs for a
different sixth-order C--H equation
 \be
 \label{4.2}
 \D^3 u \pm u - \D(|u|^{p-1}u)=0 \inB \ren.
  \ee
  However, if the unstable nonlinear diffusion operator is of fourth
  order, as in
\be
 \label{4.3}
 \D^3 u \pm u + \D^2(|u|^{p-1}u)=0 \inB \ren,
  \ee
we again arrive at the ``second-order" Sobolev range as in
\ef{sub1}.

Equations \ef{4.2} and \ef{4.3} can be studied in similar lines,
but some aspects become more technical, though not affecting the
principal conclusions and results.



\section{Preliminary results: exponentially decaying patterns in $\ren$}
\label{S2.Exp}



\subsection{Exponentially decaying patterns in $\ren$}


 The preliminary conclusions  presented here formally allow us
to consider our equations \ef{maineq} in the whole $\ren$, unlike
as in \cite{PV} where the problem was assumed to be
in a bounded domain $\O \subset \ren$.

Indeed, for the functional
\ef{funn} we deal with the integrals over $\ren$ and, actually, with the
functional setting over a certain weighted Sobolev
space\footnote{This is just a characteristic of a functional
class: surely, we cannot use any weighted metric, where any
potential approaches are lost.}, instead of $W_0^{2,2}(\O)$
as assumed in \cite{PV}. Such a functional setting of the problem in
$\ren$ is  key in what follows. In fact, a proper functional
setting assumes certain admissible asymptotic decay of solutions
at infinity, which, for \ef{maineq}, is governed by the
corresponding linearized operator.

Thus, considering \ef{maineq} in the radial geometry, with
$u=u(r)$ and $r = |x| \ge 0$, we then obtain
\be
 \label{inf1}
 \begin{split}
  \tex{ \D^2 u \equiv  u^{(4)} + \frac{2(N-1)}{r} u''' } & \tex{ 
    + \frac{2(N-1)(N-3)}{r^2} u'' - \frac{(N-1)(N-3)}{r^3} u' }\\ & \tex{=- u-p |u|^{p-3} u \left( u u''  + (p-1) \left( u' \right)^2
        + \frac{(N-1)}{r} u u' \right)\, ,
 }
 \end{split}
 \ee
Next, as usual, calculating the admissible decaying asymptotics
from \ef{inf1}, using a two scale WKBJ-type asymptotics, as a first
approximation (sufficient for our purposes), we use an
exponentially pattern of the form $ u(r) \approx r ^\d \,\eee^{a r}$ (as $r\to \infty$) in \eqref{inf1} leading easily to the
following characteristic equation:
 \be
 \label{inf111}
  \tex{
a^4=-1 \andA \d= - \frac{N-1}2.
 }
  \ee
  To be precise, note that the first equation in \eqref{inf111} comes from the homogeneity of the 
  leading terms in \eqref{inf1}. The second equality in \eqref{inf111} comes from a similar argument after 
  evaluating  the next leading terms on the left-hand side in \eqref{inf1}.  
 This
 yields a {\em two-dimensional} exponential bundle:
 \be
 \label{inf1N}
  \tex{
 u(r)\approx r^{-\frac{N-1}2} {\mathrm e}^{-r / \sqrt 2} \big[
C_1 \cos\big( \frac {r}{\sqrt 2} \big)+ C_2 \sin\big( \frac
{r}{\sqrt 2} \big)\big]\, ,}
 \ee
 where $C_{1,2} \in \re$ are two arbitrary parameters of this linearized
 bundle.
 
 Through this asymptotic analysis we shall be able to show some patterns after performing a shooting problem in Section\;\ref{S5}. 

Let us now perform a preliminary analysis. In this radial
geometry, any regular bounded solution of \ef{maineq} must satisfy
{\em two} boundary conditions at the origin (making the
bi-Laplacian non-singular)
\be
   \label{bc1}
   u'(0)=u'''(0)=0.
    \ee
   Hence, using a standard shooting strategy from $r = +\iy$, algebraically, at least two parameters are needed to
    satisfy both \ef{bc1}.

Looking again at \ef{inf1N}, where there exist {\em two}
parameters $C_{1,2} \in \re$, we observe that
 matching with two symmetry boundary conditions \ef{bc1} yields a
 well-posed and well-balanced algebraic ``2D--2D shooting problem".

To justify existence of such solutions (critical points), we then
need carefully apply different variational techniques. Of course,
for the elliptic problem in $\ren$, one needs a more technical and
delicate ``asymptotic separation" analysis. Namely, one needs to
resolve the following ``separation" procedure (as is well-known, for the
bi-Laplacian, a standard separation of variables is not
available):
 \be
 \label{sep.1}
  \tex{
  \D^2 u \equiv \big(\D_r+ \frac 1{r^2} \D_\s)^2 u=-u+...\,,
  }
  \ee
  where $\D_\s$ is the Laplace--Beltrami operator on $S^{N-1}$.
  Indeed, using the representation
   \be
   \label{sep.2}
   u(x)= R(r)Y(\s)+... \, ,
   \ee
   with $R(r)$ obtained above,
 it then leads to a complicated equations on $Y(\s)$.
In general, such an asymptotic separation procedure is expected to
determine a sufficiently wide infinite-dimensional asymptotic
bundle of solutions with an exponential decay at infinity.

However, we do not need such a full and rather technical analysis.
We must admit that we still do not know that whether a countable
family of L--S critical points are radially symmetric solutions or
not. If the former is true, then the above radial analysis is
sufficient. In general, using our previous experience, we expect
that min-max critical points are not all radially symmetric, but
cannot prove that.

Finally, we note that, obviously, this is not the case in 1D.
Indeed, in addition to the symmetric (even) profiles
satisfying \ef{bc1}, there exist others dipole-like/anti-symmetric (odd) solutions such that
 \be
 \label{bc111}
 u(0)=u''(0)=0.
  \ee
We can also check existence of such solutions numerically; see Section\;\ref{S5}.


\subsection{Spectral theory in $\ren$}

Now, we consider the linear spectral problem for the
corresponding linearized
non-local equation (\ref{nonloc}), i.e.,
\be
\label{linCH}
 \tex{
{\bf L} \psi_\b \equiv -\D \psi_\b + (-\D)^{-1} \psi_\b= \l_\b
\psi_\b \,\,\,\,\mbox{in}
 \,\,\,\, \re^N, \quad \lim_{|x|\to \infty}\psi_\b(x)=0,
 }
 \ee
  which was analyzed, however, in a bounded domain, in full detailed in \cite{FiguMit}. In problem
  \eqref{linCH} $\l_\b$ represents the $\b$th-eigenvalue associated with the eigenfunction $\psi_\b$.

 In the following we show and prove several properties of the spectrum of the linear eigenvalue
problem (\ref{linCH}). Subsequently, we will apply them, in particular, in order to get estimations of the category; see section\;\ref{S4}.

As usual, we begin checking that the linearized equation
(\ref{linCH}) admits solutions with a proper exponential decay at
infinity. Indeed, writing it down in the equivalent form
 \be
 \label{ppp1}
 \left\{ \begin{array}{ll} \D^2 \psi_\b+\psi_\b=-\l_\b \D \psi_\b &
\hbox{in}\quad \re^N,
 \ssk
\\ \quad \hbox{and} & \lim_{|x|\to +\infty}
\psi_\b(x)=0, \end{array}\right.
  \ee
 we have that a proper exponential decay at infinity holds (here, as before, $r=|x| \gg 1$, so we are again restricted to
 the radial setting of this linear spectral problem).

Moreover, due to the positivity of the operator ${\bf L}=\D^2+{\rm Id}$ on the left-hand side
of (\ref{ppp1}), $\l_\b=0$ is not an eigenvalue, so that
$\l_\b>0$, for any $\b$. Secondly, owing to \cite[Theorem\;$VI.8$]{Bre} and the compactness of the inverse integral
operator the spectrum might contain either infinitely many isolated real eigenvalues or a finite number of isolated eigenvalues,
formed by a monotone sequence of eigenvalues
  $$
  0 < \l_1\leq \l_2\leq \ldots \leq \l_\b\leq \ldots.
  $$
In other words, for a sufficiently large $\alpha$, the resolvent
of the operator ${\bf L}+\alpha {\rm I}$ is compact and, hence,
the spectrum is discrete and, of course, real, by symmetry
(self-adjoint). Note as well that when the operator is
self-adjoint the method shown in \cite{Hess} can be used to prove
that there are infinitely many eigenvalues. Thus, we have the
following result:

\begin{proposition}
\label{marce}
The operator ${\bf L}$ admits a discrete set of eigenvalues that tend to $+\infty$ and there exists at least a solution
$\psi \in W_{\rm rad}^{1,2}(\re^N)$.
\end{proposition}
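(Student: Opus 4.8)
The plan is to realise ${\bf L}$ as a positive, self-adjoint operator on the radial subspace of $L^2(\re^N)$ and to read off the discreteness of its spectrum from the compactness of its inverse; the spectral theorem for compact self-adjoint operators then supplies the monotone eigenvalue sequence tending to $+\iy$ together with at least one eigenfunction in $W^{1,2}_{\rm rad}(\re^N)$.

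First I would fix the energy space as the completion $H$ of radially symmetric, exponentially decaying test functions under the norm associated with the quadratic form
\be
\label{pp.form}
 \tex{ a(\psi,\psi):= \langle {\bf L}\psi,\psi\rangle = \int_{\re^N}|\n\psi|^2 + \int_{\re^N}|(-\D)^{-\frac12}\psi|^2, }
\ee
which is precisely the linear part of the functional \eqref{funn}. Passing to Fourier variables and using $|\xi|^2+|\xi|^{-2}\ge 2$ yields the coercivity estimate $a(\psi,\psi)\ge 2\,\|\psi\|_{L^2(\re^N)}^2$. This shows that $a(\cdot,\cdot)$ is an equivalent inner product on $H$, that $\l_\b=0$ is excluded, and in fact that $\l_\b\ge 2$ for every $\b$, reinforcing the positivity already recorded before the statement. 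Symmetry of $a(\cdot,\cdot)$ (inherited from the self-adjointness of $-\D$ and of $(-\D)^{-1}$) makes ${\bf L}$ self-adjoint.

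Second, I would invert ${\bf L}$: given $f\in L^2_{\rm rad}(\re^N)$, the Lax--Milgram theorem applied to $a(\cdot,\cdot)$ produces a unique $\psi={\bf L}^{-1}f\in H$ with $a(\psi,\cdot)=\langle f,\cdot\rangle$, and the coercivity bound gives $\|\psi\|_H\le C\,\|f\|_{L^2}$, so ${\bf L}^{-1}$ maps $L^2_{\rm rad}(\re^N)$ boundedly into $H$. The crucial point is compactness of the resulting operator on $L^2_{\rm rad}(\re^N)$: by Remark\;\ref{rem:compact} the relevant Sobolev embedding is compact on the radial class when $2\le N$, and, combined with the exponential decay at infinity enforced by the linearised bundle \eqref{inf1N}--\eqref{ppp1}, this renders the inclusion $H\hookrightarrow L^2_{\rm rad}(\re^N)$ compact. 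Hence ${\bf L}^{-1}$ is a compact, self-adjoint, positive operator on $L^2_{\rm rad}(\re^N)$, and it is injective since ${\bf L}^{-1}f=0$ forces $f={\bf L}\,0=0$.

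Finally, I would invoke the spectral theorem for compact self-adjoint operators, \cite[Theorem\;$VI.8$]{Bre}: ${\bf L}^{-1}$ admits a nonincreasing sequence of positive eigenvalues $\mu_\b\to 0^+$, with the top one $\mu_1=\max_{\psi\ne0}\langle {\bf L}^{-1}\psi,\psi\rangle/\|\psi\|_{L^2}^2$ attained by the compactness just established; injectivity on the infinite-dimensional space $L^2_{\rm rad}(\re^N)$ forces infinitely many such eigenvalues. Setting $\l_\b=1/\mu_\b$ then gives the monotone sequence $0<\l_1\le\l_2\le\cdots\to+\iy$ of eigenvalues of ${\bf L}$, i.e. a discrete set accumulating only at $+\iy$, with eigenfunctions $\psi_\b\in H\subset W^{1,2}_{\rm rad}(\re^N)$; in particular $\psi_1$ is at least one solution. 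I expect the compactness step to be the main obstacle: on the unbounded domain $\re^N$ the embedding $H\hookrightarrow L^2$ fails in general through loss of mass at infinity, and the remedy is exactly the restriction to the radial class (a Strauss--Lions type argument, Remark\;\ref{rem:compact}) reinforced by the exponential decay coming from \eqref{inf1N}, which prevents any escape of mass along a bounded sequence in $H$.
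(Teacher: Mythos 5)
Your skeleton is the same as the paper's: the text preceding Proposition~\ref{marce} argues exactly via positivity of ${\bf L}$ (so $\l_\b=0$ is excluded), ``the compactness of the inverse integral operator'', the spectral theorem for compact self-adjoint operators \cite[Theorem~VI.8]{Bre}, self-adjointness for realness/discreteness, and \cite{Hess} for the infinitude of eigenvalues. Your refinements are sound and in places sharper than the original: the Lax--Milgram inversion makes ``the inverse integral operator'' precise; the Fourier-side bound $|\xi|^2+|\xi|^{-2}\ge 2$ gives coercivity and in fact yields $\l_\b\ge 2$ directly, a cleaner proof of Proposition~\ref{Pr.NNN} than the H\"older argument the paper uses; and replacing the citation of \cite{Hess} by injectivity of the compact inverse (nonzero eigenvalues of a compact operator have finite multiplicity, and the eigenspaces of an injective compact self-adjoint operator span the space) is a legitimate, self-contained route to infinitely many eigenvalues.

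The genuine soft spot is your justification of the one step everything hinges on: compactness of $H\hookrightarrow L^2_{\rm rad}(\re^N)$. Remark~\ref{rem:compact} rests on the Strauss--Lions radial compactness \cite{Lions-JFA82}, which holds for $W^{1,2}_{\rm rad}(\re^N)\hookrightarrow L^p(\re^N)$ only in the strict range $2<p<p^*$; it is \emph{false} at the endpoint $p=2$, which is precisely the exponent you need. For instance, $u_n(x)=n^{-N/2}\phi(x/n)$ with $\phi$ radial is bounded in $W^{1,2}_{\rm rad}$, tends weakly to zero, yet $\|u_n\|_{L^2}\equiv\|\phi\|_{L^2}$. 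Likewise, the appeal to the exponential decay of \eqref{inf1N}--\eqref{ppp1} is a non sequitur here: that decay is a property of individual solutions of the linearized equation, obtained a posteriori, and cannot be imposed uniformly along an arbitrary bounded sequence in $H$, which is what compactness of the embedding requires. What can plausibly rescue the step is the low-frequency penalty in your own norm, which you never exploit: for the dilations above one computes $\|(-\D)^{-\frac 12}u_n\|_{L^2}^2=n^2\,\|(-\D)^{-\frac 12}\phi\|_{L^2}^2\to\iy$, and radial shells drifting to $r=\iy$ similarly have diverging nonlocal energy, so boundedness in $H$ does obstruct the standard loss-of-mass scenarios; but turning this into a proof of compactness is an independent argument that you would need to supply. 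To be fair, the paper itself merely asserts the compactness of the inverse (resp. of the resolvent of ${\bf L}+\a{\rm I}$) without proof, so your proposal reproduces the paper's reasoning at essentially the same level of rigour --- but as a blind proof the citation of Remark~\ref{rem:compact} at $p=2$ is a concrete misstep that should be repaired rather than papered over.
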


 \vspace{0.2cm}

\noi {\bf Remarks.} {\rm
\begin{itemize}
\item The strict positivity of eigenvalues, for eigenfunctions with an
  exponential decay at infinity follows from the equality
 \be
 \label{eq88}
  \tex{
  \int|\D \psi_\b|^2 + \int |\psi_\b|^2= \l_\b \int |\n \psi_\b|^2,
  }
   \ee
   obtained by multiplying \ref{ppp1} by $\psi_\b$ and integrating in $L^2$.
  \item Moreover, the corresponding associated family of eigenfunctions $\{\psi_\b\}$ is a complete orthogonal set in $L^2$.
  \item According to our analysis above, to get exponentially decaying solutions,
the real eigenvalues in (\ref{linCH}) must satisfy
  \be
  \label{sub12}
  \l_\b>0, \quad \hbox{for any $\b$}.
   \ee
   Note that problem \eqref{ppp1} admits a positive first eigenvalue $\l_1$ characterized by the Raileigh quotient
\be
\label{rai}
\tex{\l_1:=\inf_{u\in  W_{\rm rad}^{2,2}(\re^N)}  \frac{ \int_{\re^N} |\D u|^2+\int_{\re^N} u^2}{\int_{\re^N} |\nabla u|^2}.}
\ee
\end{itemize}
   }

  \vspace{0.2cm}

    Moreover,
   we also find the following simple observation:

 \begin{proposition}
 \label{Pr.NNN}
 If $\l_\b>0$ is an eigenvalue of (\ref{ppp1}) for any $\b$, then
  \be
  \label{lambda3}
   \l_\b \ge 2.
   \ee
   \end{proposition}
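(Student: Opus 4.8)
The plan is to read off the bound directly from the integral identity \eqref{eq88}, which already encodes the eigenvalue equation \eqref{ppp1} tested against $\psi_\b$. Writing $A=\int|\D\psi_\b|^2$, $B=\int|\psi_\b|^2$ and $C=\int|\n\psi_\b|^2$, the identity \eqref{eq88} reads $A+B=\l_\b C$, so it suffices to prove the purely functional inequality $A+B\ge 2C$; dividing by $C>0$ then yields $\l_\b\ge 2$ immediately. No information about the specific eigenfunction is needed beyond its membership in the natural energy class.

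To establish $A+B\ge 2C$, first I would integrate by parts to rewrite the Dirichlet integral as
\[
C=\int|\n\psi_\b|^2=-\int\psi_\b\,\D\psi_\b,
\]
the boundary contributions at infinity vanishing thanks to the exponential decay of $\psi_\b$ recorded in the linearized bundle \eqref{inf1N}. An application of the Cauchy--Schwarz inequality then gives
\[
C=\Big|\int\psi_\b\,\D\psi_\b\Big|\le\Big(\int|\psi_\b|^2\Big)^{1/2}\Big(\int|\D\psi_\b|^2\Big)^{1/2}=\sqrt{AB},
\]
and the arithmetic--geometric mean inequality finishes the argument via $A+B\ge 2\sqrt{AB}\ge 2C$.

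Substituting back into \eqref{eq88} gives $\l_\b C=A+B\ge 2C$, and since $\psi_\b\not\equiv 0$ decays at infinity its gradient cannot vanish identically, so $C>0$ and we may divide to conclude $\l_\b\ge 2$. The only genuinely delicate point -- and the main, though mild, obstacle -- is justifying the integration by parts over the unbounded domain $\ren$: one has to confirm that the decay rate in \eqref{inf1N} makes every boundary term at infinity disappear, so that the Green-type identity is legitimate in the weighted functional setting used throughout. I would also note in passing that equality $\l_\b=2$ would force both $A=B$ and equality in Cauchy--Schwarz, i.e.\ $\D\psi_\b$ proportional to $\psi_\b$; as the Laplacian possesses no $L^2$ eigenfunction on $\ren$, this is impossible, so the inequality is in fact strict, even though the bound $\l_\b\ge 2$ is all that is required here.
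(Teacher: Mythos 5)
Your proposal is correct and follows essentially the same route as the paper: both start from the identity \eqref{eq88}, integrate by parts to write $\int|\n\psi_\b|^2=-\int\psi_\b\,\D\psi_\b$, and combine Cauchy--Schwarz (the paper invokes H\"older) with the arithmetic--geometric mean bound to get $\int|\n\psi_\b|^2\le\frac12\int\big(|\D\psi_\b|^2+\psi_\b^2\big)$, whence $\l_\b\ge 2$. Your added care about the boundary terms at infinity (handled in the paper by Lemma~\ref{leexpin}) and the observation that the bound is in fact strict are sound refinements, but the core argument is identical.
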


   \noi{\em Proof.} Indeed, from \ref{eq88}, integrating by parts and applying the H\"{o}lder's inequality yields
    \be
    \label{eq191}
    \tex{
    \big| \int\n \psi_\b \cdot \n \psi_\b \big| =\big| \int \D \psi_\b \psi_\b\big| \le
    \big(\int |\D \psi_\b|^2 \big)^{\frac 12} \big( \int \psi_\b^2
    \big)^{\frac 12} \le \frac 12 \big[ \int (|\D \psi_\b|^2 +
    \psi_\b^2)\big],
     }
     \ee
     which proves the proposition.
     \qed

\vspace{0.2cm}

 Furthermore, the natural space for the eigenfunctions of problem \eqref{ppp1} is $W_{\rm rad}^{2,2}(\re^N)$,
i.e., the closure of
$C_0^{\infty}$-functions with respect to the norm
 $$
 \tex{
 \|u\|_{W^{2,2}(\re^N)}^2 = \int_{\re^N} |\D
u|^2+ \int_{\re^N} |\nabla u|^2+ \int_{\re^N} u^2,
 }
  $$
   and the associated inner product
    $$
     \tex{
    \left\langle u,
v \right\rangle =\int_{\re^N} \D u \D v +   \int_{\re^N} \nabla u \nabla v+ \int_{\re^N} u\,v.
 }
 $$
Indeed, the space $W_{\rm rad}^{2,2}(\re^N)$ is a
reflexive Banach space.

\vspace{0.2cm}

Under those assumptions we have the following variational
expression of the problem \eqref{ppp1}:
  $$\tex{  \int_{\re^N} \D
\psi_\b \D v +\int_{\re^N} \psi_\b v =\l_\b \int_{\re^N} \nabla \psi_\b \cdot \nabla v
\quad \hbox{for any}\quad v\in W_{\rm rad}^{2,2}(\re^N),\quad \hbox{for any $\b$}.}$$
Thus, $\psi_\b \in
W_{\rm rad}^{2,2}(\re^N)\setminus\{0\}$ is an eigenfunction of the problem
\eqref{ppp1} associated with the eigenvalue $\l_\b$. Moreover, a
weak formulation of the problem requires the following result:

\begin{lemma}
\label{leexpin} Suppose the $u\in W_{\rm rad}^{2,2}(\re^N)$. Then,
there is a sequence $\{R_n\}\subset \re_+$, with $R_n\to \infty$
as $n\to +\infty$, such that $$\tex{ \lim_{n\to +\infty}  \int_{\p
B_{R_n}}  u \frac{\p u}{\p {\bf n}} {\mathrm d}S=0,\; \lim_{n\to
+\infty}  \int_{\p B_{R_n}} \nabla u \frac{\p \nabla u}{\p {\bf
n}} {\mathrm d}S=0 \; \hbox{and}\; \lim_{n\to +\infty}  \int_{\p
B_{R_n}}  u \frac{\p \D u}{\p {\bf n}} {\mathrm d}S=0},
  $$
  where $B_{R_n}$ is
the ball of radius $R_n>0$, centered at the origin, and ${\bf n}$
denotes the unitary outward normal vector on $\partial B_{R_n}$.
\end{lemma}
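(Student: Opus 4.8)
The plan is to reduce all three surface integrals to a single scalar function of the radius and to exploit the finiteness of the relevant Sobolev norms through an elementary averaging argument. First I would introduce the nonnegative function
$$
g(r) := \int_{\partial B_r}\big(u^2 + |\nabla u|^2 + |D^2 u|^2 + |\nabla \D u|^2\big)\, {\mathrm d}S,
$$
where $D^2 u$ denotes the Hessian of $u$. Passing to polar coordinates gives $\int_0^\infty g(r)\,{\mathrm d}r = \int_{\re^N}\big(u^2 + |\nabla u|^2 + |D^2 u|^2 + |\nabla \D u|^2\big)\,{\mathrm d}x$, which is finite for the radial solutions/eigenfunctions under consideration: the first three terms are controlled by $\|u\|_{W_{\rm rad}^{2,2}(\re^N)}^2$, while the last term requires the extra regularity discussed below. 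Hence $g \in L^1(\re_+)$.

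The key observation is then purely one-dimensional: if $g \ge 0$ and $g \in L^1(\re_+)$, then necessarily $\liminf_{r\to\infty} g(r) = 0$, for otherwise $g(r) \ge c > 0$ for all large $r$ and $\int^\infty g\,{\mathrm d}r$ would diverge. Consequently there exists a sequence $R_n \to +\infty$ with $g(R_n) \to 0$, and this single sequence serves for all three limits simultaneously.

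It then remains to dominate each boundary integral by $g(R_n)$. Using the pointwise bounds $|\partial u/\partial {\bf n}| \le |\nabla u|$, $|\partial \nabla u/\partial {\bf n}| \le |D^2 u|$ and $|\partial \D u/\partial {\bf n}| \le |\nabla \D u|$, together with $2|ab| \le a^2 + b^2$ applied on $\partial B_{R_n}$, I would estimate
$$
\Big|\int_{\partial B_{R_n}} u\, \tfrac{\partial u}{\partial {\bf n}}\,{\mathrm d}S\Big| \le \tfrac12\!\int_{\partial B_{R_n}}\!\!\big(u^2 + |\nabla u|^2\big)\,{\mathrm d}S \le \tfrac12\, g(R_n),
$$
and analogously the second integral by $\tfrac12\int_{\partial B_{R_n}}(|\nabla u|^2 + |D^2 u|^2)\,{\mathrm d}S$ and the third by $\tfrac12\int_{\partial B_{R_n}}(u^2 + |\nabla \D u|^2)\,{\mathrm d}S$. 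Each right-hand side is at most $\tfrac12\, g(R_n) \to 0$, which yields the three claimed limits along $\{R_n\}$.

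The main obstacle is precisely the third integral, since it involves $\partial \D u/\partial {\bf n}$ and therefore third-order derivatives of $u$, which are not controlled by the $W^{2,2}$-norm alone. The clean way to close this is to note that the lemma is only invoked for the eigenfunctions of \eqref{ppp1} and the solutions of \eqref{nonloc}, which — by standard interior elliptic regularity together with the exponential decay recorded in \eqref{inf1N} — are smooth and satisfy $\nabla \D u \in L^2(\re^N)$; this is exactly what makes the last term of $g$ integrable. For the first two limits the hypothesis $u \in W_{\rm rad}^{2,2}(\re^N)$ suffices on its own. I would therefore state this additional regularity explicitly, so that the definition of $g$ is justified and the averaging argument above applies verbatim.
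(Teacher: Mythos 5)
Your proof is correct once the extra regularity you flag is added, but it takes a genuinely different route from the paper. The paper in fact offers no proof of this lemma beyond a one-line remark immediately following it: since the eigenvalues $\l_\b$ are positive, the functions to which the lemma is applied have exponential decay at infinity (the linearized bundle \eqref{inf1N}), so all three boundary terms vanish pointwise --- and indeed along the full limit $R\to\infty$, not merely along a subsequence. Your route instead sets $g(r)=\int_{\partial B_r}\big(u^2+|\nabla u|^2+|D^2u|^2+|\nabla\Delta u|^2\big)\,{\mathrm d}S$, uses Fubini and $g\in L^1(\re_+)$ to extract $R_n\to\infty$ with $g(R_n)\to 0$ (made rigorous, e.g., by a Chebyshev selection on $[n,n+1]$, since $g$ is only defined a.e.\ for Sobolev functions), and dominates each surface integral by $\tfrac12 g(R_n)$ via Cauchy--Schwarz; note also that $\int_{\re^N}|D^2u|^2=\int_{\re^N}|\Delta u|^2$ for $u\in H^2(\re^N)$, so your Hessian term is genuinely controlled by the paper's $W^{2,2}$ norm, which contains $\int|\Delta u|^2$ rather than the full Hessian. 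What your approach buys is a proof under much weaker hypotheses --- no decay rate whatsoever is needed for the first two limits, and the averaging argument delivers exactly the subsequence formulation in which the lemma is stated --- whereas the paper's appeal to exponential decay is shorter and gives a stronger conclusion, but only for the particular solutions whose asymptotics have been established. You are also right to single out the third integral: $\partial\Delta u/\partial{\bf n}$ involves third derivatives, which the hypothesis $u\in W^{2,2}_{\rm rad}(\re^N)$ alone does not control, so the lemma as stated is imprecise; the paper silently covers this via the decay remark, and your explicit supplementary hypothesis ($\nabla\Delta u\in L^2(\re^N)$, valid for the smooth, exponentially decaying eigenfunctions and solutions to which the lemma is actually applied) is the honest way to close that gap.
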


Note that, since, for $\l_\b >0$ for any $\b$,
 we always deal with
solutions with exponential decay at infinity (see above), the
previous assumptions are, hence, achievable.


\section{Mountain Pass Theorem and existence of at least two solutions}
\label{SM}


In this section, we apply the celebrated   Mountain Pass Theorem
(cf. \cite{AmRa,Ra2} for details of this highly cited theorem) to
ascertain the existence of a solution and, consequently, of at least a second solution,
for the problem \eqref{maineq}
in $\re^N$.

Recall that, in $\ren$, with any $N \ge 2$, we are restricted to a
class of radially symmetric solutions, for which we know their
exponential decay at infinity. For $N=1$, we can deal with both
even and odd patterns. However, in general, restrictions to both
symmetries or not, makes no difference in the variational
analysis. Nevertheless, in order to have the compact Sobolev's embedding \eqref{compact}
in the subcritical range \eqref{sub1} our results in this section are restricted to $N\geq 2$.


\subsection{Mountain Pass Theorem to ascertain the existence of a solution for the problem \eqref{maineq}}


To obtain the existence of solutions for the stationary
Cahn--Hilliard equation \eqref{maineq} we apply the Mountain Pass
Theorem to the equation \eqref{nonloc}. Hence, we look for
critical points of the functional $\mc{F}(u)$ \eqref{funn} which
correspond to weak solutions of the equation \eqref{nonloc}, i.e.,
\begin{equation}
 \label{defweak11}
    \tex{\int\limits_{\re^N} \nabla u \cdot \nabla \varphi + \int\limits_{\re^N} (-\D)^{-\frac 1 2} u \cdot
    (-\Delta)^{-\frac 1 2} \varphi  - \int\limits_{\re^N} |u|^{p}  \varphi=0,  }
\end{equation}
for any $\varphi \in W_{{\rm rad}}^{1,2}(\re^N)\equiv  H_{{\rm rad}}^1(\re^N)$ (or $C_0^\infty(\re^N)$) a radial function.
However, by elliptic regularity, those weak solutions $u$ are also strong
solutions of the equation \eqref{nonloc}; see \cite{Berger}. 

Then, we look for the existence of critical points for
the functional $\mc{F}(u)$ \eqref{funn}.
As previously proved in \cite{PV} this functional is weakly lower semicontinuous and its
Fr\'echet derivative has the expression
$$\tex{D_{u}\mc{F}(u) \varphi
    :=\int\limits_{\re^N} \nabla u \cdot \nabla \varphi + \int\limits_{\re^N} (-\D)^{-\frac 1 2}
    u\cdot
    (-\Delta)^{-\frac 1 2} \varphi  - \int\limits_{\re^N} |u|^{p} \varphi,}
     \quad \varphi \in H_{{\rm rad}}^1(\re^N),
     $$
such that the directional derivative (Gateaux's
derivative) of the functional \eqref{funn} is the following
 $$   \tex{\frac{\mathrm d}{{\mathrm d}t}\, \mc{F}(u+t\varphi)_{|t=0}= \left\langle D_u \mc{F}(u), \varphi\right\rangle
    =   D_{u}\mc{F}(u) \varphi.}
$$
Moreover, we know that the functional is $C^1$  and
the critical points of the
functional \eqref{funn} denoted by
\begin{equation*}
    \mc{C}:=\{u \in W_{{\rm rad}}^{1,2}(\re^N)\,:\,
    D_{u}\mc{F}(u) \varphi=0,\quad \hbox{for any}\quad \varphi \in W_{{\rm rad}}^{1,2}(\re^N)\}
\end{equation*}
are weak solutions in $H_{{\rm rad}}^1(\re^N)$ for the equation
\eqref{nonlocM}, i.e.,
  $$  \tex{D_{u}\mc{F}(u) \varphi=0.}
$$
Thus,
$u\in \mc{C}$ if and only if
 $$
  \tex{\mc{F}'(u):=\int\limits_{\re^N} |\nabla u|^2 + \int\limits_{\re^N} \big|(-\D)^{-\frac 1 2} u\big|^2
   - \int\limits_{\re^N}  |u|^{p+1}=0, }
      $$
      where $\mc{F}'$ is called the gradient of $\mc{F}$ at $u$.
Again, by classic elliptic regularity
(Schauder's theory; see \cite{Berger} for further details) we
will then always obtain classical solutions for such equations.

The main ingredient we are applying in
getting the existence of a solution is the celebrated
Mountain Pass Theorem due to Ambrosetti--Rabinowitz \cite{AmRa}.
Before stating the Mountain Pass Theorem, we show a very important
and necessary condition in order to satisfy the theorem.
This is the so-called \emph{Palais--Smale condition}.

\subsection{Palais--Smale condition (PS)}

 Let $E$ be a Banach
space and $\{u_n\} \subset E$ a sequence such that
\be
\label{conPS}
\mc{F}(u_n)
\quad\hbox{is bounded and}\quad  \mc{F}'(u_n) \to 0,\quad \hbox{as}\quad n\to \infty,
\ee
then
$\{u_n\}$ is pre-compact, i.e., $\{u_n\}$ has a convergence
subsequence. In particular, $$\mc{F}(u_n) \to c\quad
\hbox{and}\quad \mc{F}'(u_n) \to 0 \Rightarrow \hbox{$\{u_n\}$ has
a convergence subsequence.}$$ The (PS) condition is a convenient
way of imposing some kind of compactness into the functional
$\mc{F}$. Indeed, this (PS) condition implies that the set of
critical points at the level value $c$
$$\mc{C}_c=\{u\in E\,;\,
\mc{F}(u)=c,\;    \mc{F}'(u) =0\},$$ is compact for any $c\in\re$.

\begin{remark}
Since the functional $\mc{F}$ is $C^1$ it is
easily proved that if there exists a minimizing sequence $\{u_n\}$
of solutions of the equation \eqref{nonloc} weakly convergent in
$H_{{\rm rad}}^1(\re^N)$ to certain $u_0\in H_{{\rm rad}}^1(\re^N)$ and such that
$\mc{F}'(u_n) \to 0$ then we can assure that $u_0$ is a critical
point, i.e., $\mc{F}'(u_0)=0$.
\end{remark}

\begin{theorem}$(${\bf Mountain Pass Theorem}$)$
\label{MPT} Let $E$ (for example $H_{{\rm rad}}^1(\re^N)=W_{{\rm rad}}^{1,2}(\re^N)$) be a
Banach space.  Let the functional $\mc{F} \in C^1(E,\re)$ satisfy
the Palais--Smale condition. Moreover, suppose that $\mc{F}(0)=0$
and
\begin{enumerate}
\item[a)] There exist $\rho, \a >0$ such that
$$\mc{F}_{\p B(0,\rho)} \geq \a,$$
where $B(0,\rho)$ represents the ball centered at the origin and of radius $\rho>0$;
\item[b)] also, there exists $e\in E\setminus B(0,\rho)$ such that $\mc{F}(e)\leq 0.$
\end{enumerate}
Then, the functional $\mc{F}$ possesses a critical value $c\geq \a >0$ characterized as
\be
\label{minmaxc}
c=\inf_{g\in\G} \max_{\theta \in [0,1]} \mc{F}(g(\theta)), \quad \hbox{where}\quad \G:=\{g\in C([0,1],E)\,;\, g(0)=0,\; g(1)=e\}.
\ee
\end{theorem}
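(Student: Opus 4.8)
\noi{\em Proof proposal.} The plan is to realize the min--max value $c$ of \eqref{minmaxc} as a genuine critical value via the classical deformation argument. First I would check that $c$ is well defined and that $c\ge \a>0$. The geometric point is that every admissible path $g\in\G$ joins $g(0)=0\in B(0,\rho)$ to $g(1)=e\notin B(0,\rho)$ (by hypothesis (b)); since $\theta\mapsto \|g(\theta)\|$ is continuous, there is $\theta_0\in(0,1)$ with $g(\theta_0)\in\p B(0,\rho)$, and hypothesis (a) gives $\mc{F}(g(\theta_0))\ge\a$. Hence $\max_{\theta}\mc{F}(g(\theta))\ge\a$ for every $g\in\G$, and taking the infimum yields $c\ge\a>0$.

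The core of the argument is to show that $c$ is attained as a critical value, which I would do by contradiction. Assume the critical set at level $c$, namely $K_c=\{u\in E:\mc{F}(u)=c,\ \mc{F}'(u)=0\}$, is empty. Then the Palais--Smale condition forces $\|\mc{F}'(u)\|$ to stay bounded away from zero on a strip $\{u:|\mc{F}(u)-c|\le\bar\e\}$ for some $\bar\e>0$; otherwise a sequence with $\mc{F}(u_n)\to c$ and $\mc{F}'(u_n)\to 0$ would, by (PS), subconverge to a point of $K_c$. This quantitative gradient estimate is exactly what drives the Deformation Lemma: employing a pseudo-gradient vector field for $\mc{F}$ (which exists since $\mc{F}\in C^1(E,\re)$) and integrating the associated locally Lipschitz flow, one constructs, for some $0<\e<\min\{\bar\e,\a/2\}$, a continuous map $\eta:E\to E$ sending the sublevel set $\mc{F}^{c+\e}:=\{u:\mc{F}(u)\le c+\e\}$ into $\mc{F}^{c-\e}$, while fixing every point where $\mc{F}\le c-2\e$.

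With the deformation in hand the contradiction is immediate. By definition of $c$ as an infimum, pick a nearly optimal path $g\in\G$ with $\max_{\theta}\mc{F}(g(\theta))\le c+\e$, so that $g([0,1])\subset\mc{F}^{c+\e}$. Put $\tilde g:=\eta\circ g$, again continuous. Since $\e<\a/2\le c/2$ we have $c-2\e>0$, so $\mc{F}(0)=0<c-2\e$ and $\mc{F}(e)\le 0<c-2\e$; thus $\eta$ fixes the endpoints and $\tilde g(0)=0$, $\tilde g(1)=e$, i.e.\ $\tilde g\in\G$. But $\eta$ maps $\mc{F}^{c+\e}$ into $\mc{F}^{c-\e}$, hence $\max_{\theta}\mc{F}(\tilde g(\theta))\le c-\e<c$, contradicting $c=\inf_{g\in\G}\max_\theta\mc{F}(g(\theta))$. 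Therefore $K_c\neq\emptyset$, and $c\ge\a>0$ is a critical value.

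The main obstacle is the Deformation Lemma itself, and specifically the construction of the flow on a general Banach space: since $E$ need not be Hilbert one cannot simply follow $-\mc{F}'(u)$, and instead must build a pseudo-gradient field $V$, locally Lipschitz on the regular set, with $\|V(u)\|\le 2\|\mc{F}'(u)\|$ and $\langle\mc{F}'(u),V(u)\rangle\ge\|\mc{F}'(u)\|^2$, patched together by a partition of unity. The Palais--Smale condition enters precisely to supply the uniform lower bound on $\|\mc{F}'\|$ across the critical strip that makes the flow lower $\mc{F}$ by the definite amount $2\e$ in finite time. In the present application $E=W_{\rm rad}^{1,2}(\re^N)$ is a Hilbert space, so a genuine gradient flow is available and this technical step simplifies considerably.
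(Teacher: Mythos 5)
Your proposal is correct and is precisely the classical deformation-lemma proof of Ambrosetti--Rabinowitz: the paper itself offers no proof of Theorem~\ref{MPT}, deferring to \cite{AmRa,Ra2}, and your argument (the intermediate-value crossing of $\p B(0,\rho)$ giving $c\ge\a$, then the contradiction via a quantitative deformation of $\mc{F}^{c+\e}$ into $\mc{F}^{c-\e}$ fixing the endpoints) reproduces exactly the cited standard proof. One small caveat: your closing remark that a Hilbert space permits a genuine gradient flow is an overstatement, since a $C^1$ functional has only a continuous gradient and Peano existence fails in infinite dimensions, so the locally Lipschitz pseudo-gradient construction is still needed unless $\mc{F}'$ is locally Lipschitz.
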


\begin{remark}
As Ambrosetti--Rabinowitz mentioned, on a heuristic level, the
theorem says that, if a pair of points in the graph of $\mc{F}$
are separated by a mountain range there must be a mountain pass
containing a critical point between them. Also, although the
statement of the theorem does not imply it, normally in the
applications the origin $u=0$ is a local minimum for the
functional $\mc{F}$. As we will see below that is our case.

Most of the critical points will be maxima or minima. However, we cannot assure directly
that those critical points are global maxima or minima, we shall need to work a bit harder to obtain that.
\end{remark}

\begin{remark}
To ensure the Palais--Smale condition, we assume that condition
\eqref{compact} with \eqref{sub1} is satisfied and $N\geq 2$. Moreover, in view of the decay at
infinity of the solutions \eqref{infcon}, we can consider more
general nonlinearities $f(x,u)$ such as
  $$
  |f(x,s)|\leq
C(|s|^p+1),
 $$  because, in this case, by using \eqref{infcon},
 $$
 \tex{\frac{f(x,s)}{|s|^p} \longrightarrow 0\quad \hbox{as}\quad
s\to \infty;}
  $$
   However, we will focus on the nonlinearity
included in the equation \eqref{nonloc}.
\end{remark}

Thus, we first show that for our problem the trivial solution $u=0$ is a local minimum.
\begin{lemma}
\label{lezero}
The functional $\mc{F}(u)$ defined by \eqref{funn} possesses a local minimum at $u=0$.
\end{lemma}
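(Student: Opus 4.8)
The plan is to show that the two quadratic terms in $\mc{F}$ dominate the nonlinear term in a small ball around the origin. First I would observe that the quadratic part of the functional,
$$
\tex{Q(u):=\tfrac12\int_{\re^N}|\nabla u|^2+\tfrac12\int_{\re^N}\big|(-\D)^{-\frac12}u\big|^2,}
$$
is nonnegative and in fact coercive on $W^{1,2}_{\rm rad}(\re^N)$: the first integral already controls $\|\nabla u\|_{L^2}^2$, while the second is the manifestly nonnegative $\|(-\D)^{-\frac12}u\|_{L^2}^2$. Since $\mc{F}(0)=0$ trivially, the task reduces to estimating the subtracted term $\frac{1}{p+1}\int|u|^{p+1}$ from above by something that is $o(Q(u))$ as $\|u\|\to0$.

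The key step is the Sobolev embedding recorded in Remark~\ref{rem:compact}. For radial functions in the subcritical range we have the continuous embedding $W^{2,2}_{\rm rad}(\re^N)\hookrightarrow L^{p+1}(\re^N)$; on the energy space of the present functional it suffices to use that $H^1_{\rm rad}(\re^N)\hookrightarrow L^{p+1}(\re^N)$ in the subcritical range \eqref{sub1}, giving a constant $S_N>0$ with
$$
\tex{\|u\|_{L^{p+1}(\re^N)}\le S_N\,\|u\|_{W^{1,2}_{\rm rad}(\re^N)}.}
$$
Raising this to the power $p+1$ and using that $Q(u)\ge c\,\|u\|_{W^{1,2}}^2$ for some $c>0$ (the $L^2$-norm being controlled through the positive non-local term together with $\|\nabla u\|_{L^2}^2$), I would obtain
$$
\tex{\mc{F}(u)\ge c\,\|u\|^2-\tfrac{1}{p+1}S_N^{p+1}\,\|u\|^{p+1}
= \|u\|^2\Big(c-\tfrac{1}{p+1}S_N^{p+1}\,\|u\|^{p-1}\Big),}
$$
writing $\|u\|=\|u\|_{W^{1,2}_{\rm rad}}$. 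Since $p>1$, the factor $\|u\|^{p-1}\to0$ as $\|u\|\to0$, so there is a radius $\rho>0$ small enough that the bracket stays positive for $0<\|u\|\le\rho$. Hence $\mc{F}(u)>0=\mc{F}(0)$ on the punctured ball, which is exactly the assertion that $u=0$ is a strict local minimum.

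The main obstacle, and the only point requiring genuine care, is justifying that $Q(u)$ controls the full $W^{1,2}$-norm and not merely the gradient seminorm: the functional contains $\|(-\D)^{-\frac12}u\|_{L^2}^2$ rather than $\|u\|_{L^2}^2$, and the non-local term weights low frequencies rather than bounding them below by the $L^2$-mass directly. I would handle this by working in the exponentially-decaying radial class fixed throughout the paper, where $(-\D)^{-1}$ is the genuine positive operator defined in the introduction, so that $\|(-\D)^{-\frac12}u\|_{L^2}^2$ is a positive quadratic form; combined with the Poincaré-type control available on this class (solutions vanish at infinity, so $\|\nabla u\|_{L^2}$ already dominates $\|u\|_{L^2}$ up to a constant), one recovers a lower bound of the form $Q(u)\ge c\,\|u\|_{W^{1,2}}^2$. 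With that inequality in hand the remaining estimates are the routine ones sketched above.
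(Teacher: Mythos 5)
Your overall strategy---absorb the $L^{p+1}$ term into the quadratic part on a small ball, via the radial Sobolev embedding---is the same as the paper's, which argues along rays $tg$ with $\int_{\re^N}|\nabla g|^2=1$ and obtains $\mc{F}(tg)\ge \frac{t^2}{2}-\frac{t^{p+1}K}{p+1}>0$ for $t^{p-1}<\frac{p+1}{2K}$; your ball-uniform formulation is, if anything, the cleaner version of that computation. However, the one step you yourself identify as the crux is justified incorrectly. There is no Poincar\'e inequality on all of $\re^N$: for the dilations $u_\l(x):=u(x/\l)$ one has $\|\nabla u_\l\|_{L^2}^2=\l^{N-2}\|\nabla u\|_{L^2}^2$ while $\|u_\l\|_{L^2}^2=\l^{N}\|u\|_{L^2}^2$, so the Rayleigh quotient $\|\nabla u_\l\|_{L^2}^2/\|u_\l\|_{L^2}^2\to0$ as $\l\to\infty$, even within compactly supported (hence exponentially decaying) radial functions. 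Decay at infinity therefore cannot yield $\|u\|_{L^2}\le C\|\nabla u\|_{L^2}$, and your proposed derivation of the coercivity bound $Q(u)\ge c\,\|u\|_{W^{1,2}}^2$ fails as stated.

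The coercivity itself is nevertheless true, and the correct mechanism is exactly the non-local term you set aside as merely ``nonnegative'': by Cauchy--Schwarz (equivalently, Plancherel together with $|\xi|^2+|\xi|^{-2}\ge2$),
\[
\|u\|_{L^2(\re^N)}^2=\int_{\re^N}(-\D)^{\frac12}u\,(-\D)^{-\frac12}u
\le \frac12\int_{\re^N}|\nabla u|^2+\frac12\int_{\re^N}\big|(-\D)^{-\frac12}u\big|^2=Q(u),
\]
which is the same computation that underlies Proposition~\ref{Pr.NNN} (the eigenvalue bound $\l_\b\ge2$) and the Rayleigh quotient \eqref{rai}. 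Combining this with $Q(u)\ge\frac12\|\nabla u\|_{L^2}^2$ gives $Q(u)\ge\frac14\|u\|_{W^{1,2}(\re^N)}^2$, after which your Sobolev step and the factorization $\mc{F}(u)\ge\|u\|^2\big(c-C\|u\|^{p-1}\big)$ close the argument as intended. So the gap is local and repairable with the paper's own spectral observation, but as written the justification of the key inequality is wrong, and it is the low-frequency control supplied by $(-\D)^{-\frac12}$, not any Poincar\'e-type inequality, that makes the lemma work.
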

\begin{proof}
Take a function $g\in H_{\rm rad}^1(\re^N)$ normalized in the following way
$$\tex{ \int\limits_{\re^N} |\nabla g|^2=1.}$$ 
Then, taking a real
number  $t$ sufficiently close to $0$, using the expression of the functional $\mc{F}$ and  applying the compact Sobolev's embedding \eqref{compact}, \eqref{sub1} 
we find that
$$
\tex{\mc{F}(tg)= \frac{t^2}{2}+ \frac{t^2}{2} \int\limits_{\re^N}
|(-\D)^{-\frac 1 2} g|^2
     -  \frac{t^{p+1}}{p+1} \int\limits_{\re^N}  |g|^{p+1} \geq \left(\frac{t^2}{2} -  \frac{t^{p+1} K}{p+1} \right),
  }
$$
for a constant $K>0$ that depends on the Sobolev's constant $S_N$. 
Then,
we arrive at $$  \tex{\mc{F}(tg) \geq  \left(\frac{t^2}{2} -  \frac{t^{p+1}K}{p+1} \right)>0=\mc{F}(0),
  }
$$
with
$t$ sufficiently close to zero, i.e.,
   $$\tex{t^{p-1}< \frac{p+1}{2K}.}$$
\end{proof}

As a first step we prove that the (PS) condition is satisfied by the functional $\mc{F}$ \eqref{funn}.

\begin{lemma}
\label{leMPT12} The functional $\mc{F}$ denoted by  \eqref{funn}
satisfies the {Palais--Smale condition}.
\end{lemma}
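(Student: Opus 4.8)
The plan is to run the classical Ambrosetti--Rabinowitz verification of (PS), the only new features being the nonlocal quadratic term and the need to recover compactness on all of $\ren$. It is convenient to regard $E=W_{\rm rad}^{1,2}(\ren)$ as equipped with the inner product $B(u,v):=\int_{\ren}\n u\cdot\n v+\int_{\ren}(-\D)^{-\frac12}u\,(-\D)^{-\frac12}v$ associated with the quadratic part $Q(u):=B(u,u)$ of $\mc{F}$. Let $\{u_n\}\subset E$ satisfy $\mc{F}(u_n)$ bounded and $\mc{F}'(u_n)\to0$ in $E^*$. I would proceed in three steps: first show $\{u_n\}$ is bounded; then extract a subsequence converging weakly in $E$ and strongly in $L^{p+1}(\ren)$; and finally upgrade weak to strong convergence.

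First, to establish boundedness I would test with $u_n$ itself. Since $\langle\mc{F}'(u_n),u_n\rangle=Q(u_n)-\int_{\ren}|u_n|^{p+1}$, the combination $\mc{F}(u_n)-\frac1{p+1}\langle\mc{F}'(u_n),u_n\rangle$ cancels the nonlinear terms and equals $(\tfrac12-\tfrac1{p+1})\,Q(u_n)$. Its left-hand side is at most $C+\tfrac1{p+1}\|\mc{F}'(u_n)\|_{E^*}\|u_n\|_E=C+o(1)\|u_n\|_E$, and because $p>1$ the prefactor $\tfrac12-\tfrac1{p+1}$ is strictly positive; an inequality of the form $aX^2\le C+b_nX$ with $a>0$ and $b_n\to0$ forces $X=\|u_n\|_E$ to stay bounded. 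At this point I would use that $Q$ dominates the full $W^{1,2}$ norm, which is the analytic content of Proposition~\ref{Pr.NNN}: in Fourier variables the elementary bound $|\widehat u|^2\le\tfrac12\big(|\xi|^2|\widehat u|^2+|\xi|^{-2}|\widehat u|^2\big)$ gives $\|u\|_{L^2(\ren)}^2\le\tfrac12\,Q(u)$, hence $Q(u)\ge\tfrac23\|u\|_{W^{1,2}(\ren)}^2$.

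Next, as $E$ is reflexive I would pass to a subsequence with $u_n\rightharpoonup u$ in $E$. Because we are confined to radial functions and $1<p<p_*$ is subcritical with $N\ge2$, the embedding \eqref{compact} is compact, so $u_n\to u$ strongly in $L^{p+1}(\ren)$; consequently $|u_n|^{p-1}u_n\to|u|^{p-1}u$ in $L^{(p+1)/p}(\ren)$ and $\int_{\ren}|u_n|^{p-1}u_n\,(u_n-u)\to0$ by H\"older. To conclude, I would write $\langle\mc{F}'(u_n),u_n-u\rangle=B(u_n,u_n-u)-\int_{\ren}|u_n|^{p-1}u_n(u_n-u)$; the left side tends to $0$ because $\mc{F}'(u_n)\to0$ and $\{u_n-u\}$ is bounded, so $B(u_n,u_n-u)\to0$. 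Since also $B(u,u_n-u)\to0$ (as $u_n-u\rightharpoonup0$ in $E$ and $B(u,\cdot)$ is a bounded linear functional), subtracting yields $\|u_n-u\|_E^2=B(u_n-u,u_n-u)\to0$, i.e. $u_n\to u$ strongly in $E$, and a fortiori in $W_{\rm rad}^{1,2}(\ren)$. This is exactly the (PS) condition.

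The hard part is the compactness invoked in the second step. On all of $\ren$ the embedding $W^{1,2}\hookrightarrow L^{p+1}$ fails to be compact, since mass can escape to infinity under translations, and a generic (PS) sequence need not converge; this is why the theorem is stated only for $N\ge2$ and in the subcritical range. What rescues the argument is precisely the restriction to the radial subspace through the Strauss-type compact embedding recorded in Remark~\ref{rem:compact}. The remaining points requiring care are that the (PS) sequence genuinely lies in the radial class and that the nonlocal factor $(-\D)^{-\frac12}u_n$ behaves well under weak convergence within the exponentially decaying functional setting emphasized throughout the paper, so that $B(u,\cdot)$ is indeed a bounded functional and $Q$ is comparable to the $W^{1,2}$ norm on the relevant class.
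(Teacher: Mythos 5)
Your proof is correct and, while it follows the same broad strategy as the paper (an Ambrosetti--Rabinowitz multiplier argument for boundedness, then the compact radial embedding of Remark~\ref{rem:compact} for convergence), it differs in two substantive ways, and in both places your version is actually tighter than the paper's. For boundedness, the paper forms $\mu\mc{F}(u_n)-\langle D_u\mc{F}(u_n),u_n\rangle$ with $2<\mu<p+1$ and then must absorb a leftover $-\frac{\mu-2}{2}\int u_n^2$ term; it does so by claiming the auxiliary function $G(s)=\bigl(1-\frac{\mu}{p+1}\bigr)s^{p+1}-\frac{\mu-2}{2}s^2$ is nonnegative for all $s\ge0$, which is false near $s=0$ (the negative quadratic dominates the $(p+1)$-power there), so the paper's step has a gap. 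Your choice of multiplier $\frac{1}{p+1}$ cancels the nonlinear term exactly, leaving $\bigl(\frac12-\frac1{p+1}\bigr)Q(u_n)$ with no leftover, and your Fourier/AM--GM inequality $Q(u)\ge 2\|u\|_{L^2(\ren)}^2$ --- precisely the second-order, nonlocal analogue of Proposition~\ref{Pr.NNN} --- then supplies the $L^2$ control that the paper's $G$-trick was meant to produce. For the compactness step, the paper essentially stops after estimating $|\mc{F}(u)|\le K\|u\|_{W_{\rm rad}^{1,2}(\ren)}^2$ and asserting strong convergence, which is a non sequitur; your third step (testing $\mc{F}'(u_n)$ against $u_n-u$, using strong $L^{p+1}$ convergence from the Strauss-type embedding to kill the nonlinear pairing, and $B(u,u_n-u)\to0$ from weak convergence, whence $B(u_n-u,u_n-u)\to0$) is the standard complete argument and fills exactly what the paper leaves implicit. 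Both your proof and the paper's share the same unresolved delicacy, which you flag honestly: on all of $\ren$ the term $\|(-\D)^{-\frac12}u\|_{L^2}$ is not controlled by the $W^{1,2}$ norm in general, so the comparability of $Q$ with the $W^{1,2}$ norm (and hence the meaning of $E$ with the $B$-inner product) rests on the exponentially decaying functional class the paper works in throughout; within that setting your argument goes through as written.
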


\begin{proof}
Let $\{u_n\}$ be a sequence such that $\mc{F}(u_n) \to c$
and $\mc{F}'(u_n) \to 0$. Then, since $ \mc{F}'(u_n) \to 0$, for any $\e>0$, there exists a
subsequence of
 $\{u_n\}$ (denoted again by $\{u_n\}$) such that
 $$\tex{ \big| \int\limits_{\re^N} \nabla u_n \cdot \nabla \varphi + \int\limits_{\re^N} (-\D)^{-\frac 1 2}
    u_n \cdot
    (-\Delta)^{-\frac 1 2} \varphi  - \int\limits_{\re^N} |u_n|^{p} \varphi\big| \leq \e \|\varphi\|_{W_{{\rm rad}}^{1,2}(\re^N)}.}
    $$
Indeed,  $\varphi=u_n$ yields
  \be
  \label{pls22}
  \tex{ \big| \int\limits_{\re^N}
|\nabla u_n|^2 + \int\limits_{\re^N} |(-\D)^{-\frac 1 2} u_n|^2 -
\int\limits_{\re^N}  |u_n|^{p+1}\big| \leq \e \|u_n\|_{W_{{\rm
rad}}^{1,2}(\re^N)}.}
   \ee
 Furthermore, due to \eqref{conPS} we have that
$\mc{F}(u_n)$ is bounded (or $\mc{F}(u_n)\to c$), i.e.,
 $$\tex{|\mc{F}(u_n)|\leq c,}$$
 \be
 \label{pls33}
  \tex{\hbox{or},\quad \frac{1}{2} \int\limits_{\re^N} |\nabla
u_n|^2 + \frac{1}{2} \int\limits_{\re^N} |(-\D)^{-\frac 1 2}
u_n|^2 -
     \frac{1}{p+1} \int\limits_{\re^N}     |u_n|^{p+1}=c+o(1),}
   \ee
 Hence, for a positive constant $\mu \in \re$ (to be chosen below) and using \eqref{pls22}, \eqref{pls33} 
 $$\tex{ \mu \mc{F}(u_n) - \left\langle D_u\mc{F}(u_n),u_n\right\rangle =c+ o(1)  \|u_n\|_{W_{{\rm
rad}}^{1,2}(\re^N)}= c+ \e  \|u_n\|_{W_{{\rm
rad}}^{1,2}(\re^N)}.}$$
Indeed, we actually have that
$$\tex{ \frac{\mu-2}{2} \int\limits_{\re^N} |\nabla u_n|^2 +  \frac{\mu-2}{2}
\int\limits_{\re^N} |(-\D)^{-\frac 1 2} u_n|^2 + \left(1- \frac{\mu}{p+1}\right) \int\limits_{\re^N} |u_n|^{p+1}= c+ \e  \|u_n\|_{W_{{\rm
rad}}^{1,2}(\re^N)}.}$$
Subsequently, for $\mu >2$ we arrive at the inequality
$$\tex{ \frac{\mu-2}{2}  \|u_n\|_{W_{{\rm rad}}^{1,2}(\re^N)}^2 +  \left(1- \frac{\mu}{p+1}\right) \int\limits_{\re^N} |u_n|^{p+1} -
\frac{\mu-2}{2} \int\limits_{\re^N} |u|^2 \leq c+ \e  \|u_n\|_{W_{{\rm
rad}}^{1,2}(\re^N)}.
}$$
Now, analysing the function
$$G(s)=  \left(1- \frac{\mu}{p+1}\right)  s^{p+1} - \frac{\mu-2}{2} s^2, \quad \hbox{for $s\geq 0$,}$$
and since $G(0)=0$ one can easily realize that
$$G(s)\geq 0,\quad \hbox{for any $s\geq 0$\; if \; $\mu <p+1$}.$$
Consequently, for any appropriate
$\e>0$ sufficiently small, and $2<\mu <p+1$
we get the boundedness of the norms in
$W_{{\rm rad}}^{1,2}(\re^N)$ for the elements of the subsequence
$\{u_n\}$. Note that $p>1$.

 Subsequently, due to the Hardy--Littlewood--Sobolev's inequality
 \be
 \label{hls}
    \tex{\big\|(-\D)^{-\frac 1 2} u_n\big\|_{L^2(\re^N)}\leq C \big\|u_n\big\|_{L^{p+1}(\re^N)}, \quad \hbox{with}\quad 1<p \le p_*},
\ee
we find that
$$|\mc{F}(u)|\leq \frac{1}{2} \|u\|_{W_{{\rm rad}}^{1,2}(\re^N)}^2 + K  \big\|u_n\big\|_{L^{p+1}(\re^N)}^2\quad \hbox{with $K$ a positive constant}.$$
Therefore, if $N\geq 2$ and $1<p < p_*$ for \eqref{sub1}, the Sobolev's embedding \eqref{compact}  is compact and, hence,
$$|\mc{F}(u)|\leq K  \|u\|_{W_{{\rm rad}}^{1,2}(\re^N)}^2,\quad \hbox{with $K$ a positive constant,}$$
obtaining the strong convergence of the subsequence $\{u_n\}$ and proving the (PS) condition.
\end{proof}

Finally, we apply the Mountain Pass
Theorem \eqref{MPT} in order to obtain the existence of solutions for the equation \eqref{nonloc}. Thus, we state the following result.

\begin{lemma}
\label{leMPT22}
Let the functional $\mc{F} \in C^1(W_{{\rm rad}}^{1,2}(\re^N),\re)$ be the functional denoted by \eqref{funn}. Then,
\begin{enumerate}
\item[a)] There exist $\rho, \a >0$ such that
$$\mc{F}_{\p B(0,\rho)} \geq \a,$$
where $B(0,\rho)$ represents the ball centered at the origin and of radius $\rho>0$;
\item[b)] also, there exists $e\in W^{1,2}(\re^N) \setminus B(0,\rho)$ such that $\mc{F}(e)\leq 0.$
\end{enumerate}
\end{lemma}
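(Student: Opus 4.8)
The plan is to read off both geometric conditions directly from the algebraic structure of $\mc{F}$, namely $\mc{F}(u)=\frac12 Q(u)-\frac1{p+1}\int_{\ren}|u|^{p+1}$, where $Q(u):=\int_{\ren}|\n u|^2+\int_{\ren}|(-\D)^{-\frac12}u|^2\ge 0$ is the non-negative quadratic part. Since the two pieces have different homogeneities ($2$ against $p+1>2$), the quadratic term dominates near $u=0$, which yields (a), while the super-quadratic term dominates along any ray, which yields (b). Before treating either, I would record that $Q$ controls the full $H^1$-norm: by Plancherel, $\int u^2=\int|\hat u|^2\le(\int|\xi|^2|\hat u|^2)^{\frac12}(\int|\xi|^{-2}|\hat u|^2)^{\frac12}=\|\n u\|_{L^2}\,\|(-\D)^{-\frac12}u\|_{L^2}\le\frac12 Q(u)$, whence $\|u\|_{W^{1,2}_{\rm rad}}^2=\|\n u\|_{L^2}^2+\|u\|_{L^2}^2\le\frac32 Q(u)$, i.e.\ $Q(u)\ge\frac23\|u\|_{W^{1,2}_{\rm rad}}^2$.

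For part (a), I would combine this lower bound with the compact Sobolev embedding of Remark~\ref{rem:compact}, which in the subcritical range \eqref{sub1} and for $N\ge2$ gives $\|u\|_{L^{p+1}}\le S_N\|u\|_{W^{1,2}_{\rm rad}}$. Substituting both estimates into $\mc{F}$ yields, for every $u$ with $\|u\|_{W^{1,2}_{\rm rad}}=\rho$,
$$\mc{F}(u)\ge\tfrac13\rho^2-\tfrac{S_N^{p+1}}{p+1}\rho^{p+1}=\rho^2\Big(\tfrac13-\tfrac{S_N^{p+1}}{p+1}\rho^{p-1}\Big).$$
Because $p-1>0$, the bracket is positive for all sufficiently small $\rho>0$; fixing such a $\rho$ and setting $\alpha:=\tfrac13\rho^2-\tfrac{S_N^{p+1}}{p+1}\rho^{p+1}>0$ establishes $\mc{F}_{\p B(0,\rho)}\ge\alpha$, which is just a quantitative sharpening of Lemma~\ref{lezero}.

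For part (b), I would fix one non-trivial radial test function $u_0\in W^{1,2}_{\rm rad}(\ren)$, $u_0\not\equiv0$; the non-local term $\|(-\D)^{-\frac12}u_0\|_{L^2}$ is finite by the Hardy--Littlewood--Sobolev inequality \eqref{hls}, so $Q(u_0)<\infty$. Evaluating along the ray $t\mapsto tu_0$ gives $\mc{F}(tu_0)=\frac{t^2}2 Q(u_0)-\frac{t^{p+1}}{p+1}\int_{\ren}|u_0|^{p+1}$, and since $p+1>2$ the negative term dominates as $t\to+\infty$, forcing $\mc{F}(tu_0)\to-\infty$. Choosing $t^*$ large enough that simultaneously $\mc{F}(t^*u_0)\le0$ and $t^*\|u_0\|_{W^{1,2}_{\rm rad}}>\rho$, the function $e:=t^*u_0$ lies in $W^{1,2}(\ren)\setminus B(0,\rho)$ and satisfies $\mc{F}(e)\le0$, which is (b).

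The computations are routine; the only points that genuinely need care are the two appearances of the non-local quadratic term. For (a) it is the comparison of $Q$ with the local $H^1$-norm (the Plancherel interpolation above, which also shows that $Q$ is an admissible equivalent energy norm), and for (b) it is merely the finiteness of $Q(u_0)$, guaranteed by \eqref{hls}. Both steps hinge on staying inside the subcritical range $1<p<p_*$ of \eqref{sub1} with $N\ge2$, where the embedding \eqref{compact} is compact; outside this range the Sobolev constant $S_N$ used in (a) is unavailable and the quadratic-versus-super-quadratic balance that drives the whole argument would fail.
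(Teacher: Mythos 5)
Your proof is correct and follows essentially the same route as the paper's: part (a) by playing the quadratic part of $\mc{F}$ against the superquadratic $L^{p+1}$ term via the embedding \eqref{compact} and the Hardy--Littlewood--Sobolev inequality \eqref{hls}, and part (b) by evaluating $\mc{F}(tu_0)\to-\infty$ along a ray, exactly as in \eqref{minwfu}. Your one addition is a welcome sharpening: the Plancherel interpolation $\|u\|_{L^2}^2\le\|\n u\|_{L^2}\,\|(-\D)^{-\frac12}u\|_{L^2}$ makes explicit why the quadratic form controls the full $W_{\rm rad}^{1,2}$-norm, a point the paper's expansion $\mc{F}(u)=K\|u\|_{W_{{\rm rad}}^{1,2}(\re^N)}^2+o\big(\|u\|_{W_{{\rm rad}}^{1,2}(\re^N)}^2\big)$ in \eqref{mosob} uses without justification.
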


\begin{proof}
First, taking $u\in W_{{\rm rad}}^{1,2}(\re^N)\setminus \{0\}$, we
have that
     \be
     \label{mosob}
      \tex{
     \tex{\mc{F}(u)= K \|u\|_{W_{{\rm rad}}^{1,2}(\re^N)}^2 +
     o\big(\|u\|_{W_{{\rm rad}}^{1,2}(\re^N)}^2\big)\quad \hbox{as}\quad u\to 0.}
 }
 \ee
Indeed, applying the Hardy--Littlewood--Sobolev inequality \eqref{hls}
and the Sobolev's embedding \eqref{compact}, in the range
\eqref{sub1}, yields
\be
\label{pmas}
\tex{ \big\|u_n\big\|_{L^{p+1}(\re^N)}^2 \leq K \|u\|_{W_{\rm rad}^{1,2}(\re^N)}^2\quad \hbox{with $K$ a positive constant},}
\ee
 so that
 $$\tex{\int\limits_{\re^N} |u|^{p+1}= o\big(\|u\|_{W_{\rm rad}^{1,2}(\re^N)}^2\big) \quad \hbox{as}\quad u\to 0.}$$
Thus, we can assure that there exists $\rho, \a >0$ such that
$$\mc{F}_{\p B(0,\rho)} \geq \a.$$
Additionally,
assuming the compact Sobolev's embedding \eqref{compact}
and satisfying the decay condition at infinity \eqref{infcon},
  we find that
  \be
\label{minwfu}
    \begin{split} & \tex{\mc{F}(tu) =
    \frac{t^2}{2} \int\limits_{\re^N} |\nabla u|^2 + \frac{t^2}{2} \int\limits_{\re^N} |(-\D)^{-\frac 1 2} u|^2 -
     \frac{t^{p+1}}{p+1} \int\limits_{\re^N}
    |u|^{p+1}    } \\ & \tex{ \leq \frac{t^2K}{2} \|u\|^2_{W_{{\rm rad}}^{1,2}(\re^N)}  -\frac{t^{p+1}}{p+1}      \|u\|_{L^{p+1}(\re^N)}^{p+1}
   \to -\infty,
    }
    \end{split}
    \ee
as $t\to \infty$ and for a positive constant $K$ proving the
second condition of the lemma. Note that the positive constant $K$ comes after
  using the Hardy--Littlewood--Sobolev inequality \eqref{hls}
 with $p_*$ corresponding to the Sobolev range \eqref{sub1}.
\end{proof}

These results provide us with the existence of at least a solution for the non-local equation
\eqref{nonloc} and, hence, the existence of stationary solutions for the Cahn--Hilliard equation \eqref{maineq}.


\subsection{Existence of at least a second solution}


Now, we follow a similar argument line as performed above to get the existence of another solution for
the non-local equation \eqref{nonloc}.
Therefore, to ascertain the existence of at least  a second
solution, we will work on seeking for solutions of the equation
\be
 \label{nonlocM}
    -\D u+(-\D)^{-1} u - |u+u^*|^{p-1} |u+u^*| +|u^*|^{p-1} u^*=0.
\ee such that $u^*$ is solution of the equation \eqref{nonloc}
and, hence, to  the stationary Cahn--Hilliard equation
\eqref{maineq}. 

Note that, since $u^*$ is a solution of \eqref{nonloc} if 
 $u\neq 0$ is a solution of \eqref{nonlocM} then, $u+u^*$ will be also a solution of \eqref{nonloc} 
 different from $u^*$

As seen above the critical points of the functional $\mc{F}(u)$ \eqref{funn} correspond to weak solutions
of the equation \eqref{nonloc}.
However, by the elliptic regularity, $u^*$ is also a strong
solution of the equation \eqref{nonloc}. Now, for
\eqref{nonlocM}, we look for the existence of critical points for
the functional denoted by
\begin{equation}
\label{funnM}
\begin{matrix}
    \tex{ \mc{J}(u):=\frac{1}{2} \int\limits_{\re^N} |\nabla u|^2 +
    \frac{1}{2} \int\limits_{\re^N} |(-\D)^{-\frac 1 2} u|^2
    } \ssk \\     \tex{
    -
     \frac{1}{p+1} \int\limits_{\re^N}     |u+u^*|^{p+1}+  \frac{1}{p+1} \int\limits_{\re^N}
      |u^*|^{p+1} +  \int\limits_{\re^N}  u
      |u^*|^{p}.}
    \end{matrix}
\end{equation}
Furthermore, note that this functional is weakly lower
semicontinuous and its Fr\'echet derivative has the expression
$$\tex{D_{u}\mc{J}(u) \varphi = \int\limits_{\re^N} \nabla u \cdot
\nabla \varphi + \int\limits_{\re^N} (-\D)^{-\frac 1 2}
    u\cdot
    (-\Delta)^{-\frac 1 2} \varphi  - \int\limits_{\re^N}  \big(|u+u^*|^{p} -|u^*|^{p}\big)  \varphi,}
    $$
    such that the directional derivative of the functional \eqref{funnM} is
    \begin{equation}
\label{derv}
    \tex{\frac{\mathrm d}{{\mathrm d}t}\, \mc{J}(u+t\varphi)_{|t=0}= \left\langle D_u \mc{J}(u),\varphi\right\rangle
    =   D_{u}\mc{J}(u) \varphi.}
\end{equation}
 \begin{remark}
  To arrive at the expression for those derivatives and equalities above for the functional
 $\mc{J}(u)$ \eqref{funnM} follow
 previous arguments shown for the functional $\mc{F}(u)$ \eqref{funn} in \cite{PV}.
For several additional properties we stress
the work made in Sato--Watanabe \cite{SatWat}.  In particular,
Lemmas $5.1$ and $5.2$, where this argument to ascertain a second solution was used for a problem of the type \eqref{sinpro}.
\end{remark}

 Furthermore, due to \eqref{derv}, the critical points of the
functional \eqref{funnM} denoted by
\begin{equation*}
    \mc{C}_*:=\{u \in W_{{\rm rad}}^{1,2}(\re^N)\,:\,
    D_{u}\mc{J}(u) \varphi=0,\quad \hbox{for any}\quad \varphi \in W_{{\rm rad}}^{1,2}(\re^N)\}
\end{equation*}
are weak solutions in $H_{{\rm rad}}^1(\re^N)$ for the equation
\eqref{nonlocM}, i.e.,
   $$ \tex{\mc{J}'(u):=D_{u}\mc{J}(u) \varphi=0.}$$
Thus,
$u\in \mc{C}_*$ if and only if
\begin{equation}
\label{critp}
    \tex{\mc{J}'(u):=\int\limits_{\re^N} |\nabla u|^2 + \int\limits_{\re^N} \big|(-\D)^{-\frac 1 2} u\big|^2 - \int\limits_{\re^N}  \big(|u+u^*|^{p} -|u^*|^{p}\big) u=0.  }
\end{equation}
Again, by classic elliptic regularity the solutions of the equation \eqref{nonlocM} are also
classical solutions.

Then, as previously mentioned, we apply again Mountain Pass Theorem
to ascertain the existence of a second solution for the functional $\mc{J}$ denoted by \eqref{funnM}.

Furthermore, note that, again, since the functional $\mc{J}$ is $C^1$, if there exists a minimizing sequence $\{u_n\}$
of solutions of the equation \eqref{nonlocM} weakly convergent in
$H_{{\rm rad}}^1(\re^N)$ to certain $u_0\in H_{{\rm rad}}^1(\re^N)$, such that
$\mc{J}'(u_n) \to 0$ then we can assure that $u_0$ is a critical
point $\mc{J}'(u_0)=0$.

Also, since the nonlinearity
of the equation \eqref{nonloc} satisfies $$\tex{ |\xi
+u^*|^{p-1}|\xi+u^*|- |u^*|^{p-1} u^*=o(|\xi|)\quad \hbox{as}\quad
\xi\to 0,}$$ this implies that \eqref{nonlocM} possesses again the
trivial solution $u=0$. Indeed, it is not difficult to see that
$u=0$ is a local minimum just applying similarly Lemma\;\ref{lezero} since $\mc{J}(0)=0$.

Before proving the Mountain Pass Theorem for the perturbed problem \eqref{nonlocM}
with the associated functional \eqref{funnM} we prove some properties
for several auxiliary functions that will be used later.
\begin{lemma}
\label{leauxz}
Assume that $s\geq 0$ and $a>0$. Then the following holds:
\begin{enumerate}
\item[(i)] The function defined by
\be
\label{auxfun1}
\tex{T(s)=(a+s)^ps-a^p s-\frac{2+\mu}{p+1}(a+s)^{p+1} + \frac{2+\mu}{p+1}a^{p+1} + (2+\mu) a^ps +\frac{\mu}{2}pa^{p-1} s^2,}
\ee
satisfies that
$$\tex{T(s)\geq 0,\quad  \hbox{for any}\quad  s\geq 0 \quad \hbox{and}\quad \mu=\min\{1,p-1\}.}$$
\item[(ii)] Let
\be
\label{auxfun2}
\tex{H(a,s)= \frac{1}{p+1}  |a+s|^{p+1}-      \frac{1}{p+1}  |a|^{p+1} -   s |a|^{p}.}
\ee
Then, there exists $\e>0$
such that
\be
\label{auxsol}
 \tex{H(a,s) - \frac{p}{2}|a|^{p-1} s^2 \leq \e |a|^{p-1} s^2+ C_\e s^{p+1},}
     \ee
     with $C_\e$ a positive constant that depends on $\e>0$.
     \item[(iii)]
     \be
 \label{ineab}
 \tex{ |a+s|^{p} -|a|^{p}\leq C (|s|^p+|a|^{p-1} |s|),\quad \hbox{for a positive constant $C>0$,}}
 \ee
\end{enumerate}
\end{lemma}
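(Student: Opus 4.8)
The plan is to verify the three inequalities by elementary calculus, treating each as a single-variable statement in $s\ge 0$ (with $a>0$ a parameter). In each case I would set up an auxiliary function, check that it vanishes at $s=0$, and then control its derivative to conclude non-negativity (for (i)) or the desired bound (for (ii) and (iii)).

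For part (i), I would define $T(s)$ as in \eqref{auxfun1} and observe that $T(0)=0$. The idea is to show $T'(s)\ge 0$ for all $s\ge0$, which together with $T(0)=0$ yields $T(s)\ge0$. Differentiating gives
\[
T'(s)=p(a+s)^{p-1}s+(a+s)^p-a^p-(2+\mu)(a+s)^p+(2+\mu)a^p+\mu p a^{p-1}s,
\]
and I would check that $T'(0)=0$ as well, then differentiate once more and use the convexity/monotonicity of the map $s\mapsto(a+s)^p$ to pin down the sign of $T''$. The key algebraic input is the choice $\mu=\min\{1,p-1\}$, which is exactly what makes the quadratic-in-$s$ correction terms dominate the leading $(a+s)^{p+1}$ contribution near $s=0$; I expect the two regimes $p\le 2$ and $p>2$ to require slightly different handling, and this case split is where the main care is needed.

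For part (ii), with $H(a,s)$ as in \eqref{auxfun2}, I would Taylor-expand $H(a,s)$ in $s$ about $s=0$. Since $\partial_s H(a,s)=|a+s|^{p-1}(a+s)-|a|^p$ vanishes at $s=0$ and $\partial_s^2 H(a,0)=p|a|^{p-1}$, the quadratic term of the expansion is exactly $\tfrac p2|a|^{p-1}s^2$, so $H(a,s)-\tfrac p2|a|^{p-1}s^2$ is of higher order. The remaining task is to split the estimate of this remainder into a region where $s$ is small relative to $a$ (where the difference behaves like $|a|^{p-1}s^2$, absorbed into the $\e|a|^{p-1}s^2$ term) and a region where $s$ is comparable to or larger than $a$ (where the growth is controlled by $s^{p+1}$, giving the $C_\e s^{p+1}$ term). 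The constant $C_\e$ blows up as $\e\to0$, which is consistent with the statement.

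Part (iii) is the simplest: the inequality $|a+s|^p-|a|^p\le C(|s|^p+|a|^{p-1}|s|)$ follows from the standard elementary estimate $|x+y|^p-|x|^p\le C(|y|^p+|x|^{p-1}|y|)$ valid for $p>1$, which itself is obtained by distinguishing $|s|\le|a|$ (mean value theorem, giving the $|a|^{p-1}|s|$ term) from $|s|\ge|a|$ (giving the $|s|^p$ term). The main obstacle in the whole lemma is the sign analysis in part (i): unlike (ii) and (iii), which only require one-sided bounds with room to spare, part (i) demands a sharp non-negativity that hinges on the precise value of $\mu$, so the derivative computation must be carried out carefully rather than absorbed into a generous constant.
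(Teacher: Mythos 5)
Your proposal is correct and, for parts (i) and (ii), takes essentially the paper's route; part (iii) is where you genuinely diverge, so a brief comparison is worthwhile. For (i) the paper pushes the derivative ladder one step further than you plan to: it checks $T(0)=T'(0)=T''(0)=0$ and computes $T'''(s)=p(p-1)(p-2)(a+s)^{p-3}s+p(p-1)(1-\mu)(a+s)^{p-2}$, which is pointwise nonnegative exactly for $\mu=\min\{1,p-1\}$ (for $1<p<2$ the bracket collapses to $(2-p)\,a\,(a+s)^{p-3}\ge 0$, so both cases become trivial); integrating three times then gives $T\ge 0$. Stopping at $T''$ as you suggest is possible, but note that for $1<p<2$ the sign of $T''=p(p-1)\big[(a+s)^{p-2}s-\mu\int_0^s(a+t)^{p-2}\,dt\big]$ is \emph{not} a direct convexity/monotonicity statement --- the map $t\mapsto(a+t)^{p-2}$ is decreasing there, so the naive comparison goes the wrong way --- and making it rigorous amounts to differentiating once more, i.e.\ re-deriving the paper's $T'''$ computation; so the extra differentiation is the cleaner execution of your own plan. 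For (ii) the paper writes the discrepancy exactly as the Taylor integral remainder $\frac{p(p-1)}{2}\int_0^s(a+\tau)^{p-2}(s-\tau)^2\,d\tau$ (the paper omits the factor $\frac12$, a harmless slip) and then applies Young's inequality in the form $a^{p-2}s^3\le \e\, a^{p-1}s^2+C_\e\, s^{p+1}$; your two-regime splitting, $s$ small relative to $a$ versus $s$ comparable to $a$, is just the hands-on version of that same Young step and yields the identical estimate, so the two arguments are interchangeable. For (iii) the paper integrates $|a+s|^p-|a|^p=p\int_0^s|a+t|^{p-1}\,dt$ and then invokes Newton's binomial expansion, which is literally valid only for integer $p$; your mean-value/case-split argument ($|s|\le|a|$ giving the $|a|^{p-1}|s|$ term, $|s|\ge|a|$ giving the $|s|^p$ term) is the standard elementary estimate, works for every real $p>1$, and is in fact the more rigorous of the two proofs --- this is a point where your blind attempt improves on the paper's sketch rather than merely matching it.
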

\begin{proof}
\begin{enumerate}
\item[(i)] First we observe that
$$\tex{T(s)=T'(s)=T''(s)=0,}$$
and
$$\tex{T'''(s)=p(p-1) (p-2) (a+s)^{p-3}s+p(p-1)(a+s)^{p-2} (1-\mu).}$$
Therefore,
$$\tex{T'''(s)\geq 0, \quad \hbox{for any}\quad s\geq 0,\quad \hbox{if}\quad \mu=\min\{1,p-1\}.}$$
Hence, integrating three times we actually have that
$$\tex{T(s)\geq 0 \quad \hbox{for any}\quad s\geq 0,\quad \mu=\min\{1,p-1\}.}$$

\item[(ii)] This inequality is easily proved just using the next expression (integrating by parts)
    $$\tex{H(a,s)- \frac{p}{2}a^{p-1} s^2 =p(p-1)\int_0^s (a+\tau)^{p-2} (s-\tau)^2d\tau.}$$
    Indeed, applying Young's inequality
    yields
    $$
    \tex{p(p-1)\int_0^s (a+\tau)^{p-2} (s-\tau)^2d\tau
        \leq  C a^{p-2} s^3 +C s^{p+1}
    \leq\e a^{p-1} s^2 +C_\e s^{p+1},}
    $$
       if $s$ is very close to zero.
    \item[(iii)]      To prove inequality \eqref{ineab} we write
     \be
     \label{ineuse}
     \tex{ |a+s|^{p} -|a|^{p} = p\int_0^{s} |a+t|^{p-1}{\mathrm d}t.}
     \ee
Integrating and applying Newton's binomial yields
  $$
   \tex{
  p\int_0^{s}
|a+t|^{p-1}{\mathrm d}t =|a+s|^{p}- |a|^{p}=\sum_{k=0}^p
\binom{p}{k} |s|^k |a|^{p-k}\leq  C (|s|^p+|a|^{p-1} |s|),
 }
$$
 and, hence, the inequality holds for an appropriate constant $C$.
\end{enumerate}
\end{proof}

Subsequently, we prove the (PS) condition for functional $\mc{J}$ \eqref{funnM}.
\begin{lemma}
\label{leMPT1} The functional $\mc{J}$ denoted by  \eqref{funnM}
satisfies the {Palais--Smale condition} if $N\geq 2$ and $1<p<p^*$ for \eqref{sub1}.
\end{lemma}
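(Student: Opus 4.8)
The plan is to follow, almost verbatim, the two-step scheme already used for the unperturbed functional in Lemma~\ref{leMPT12}, now replacing $\mc{F}$ by $\mc{J}$ and calling on the pointwise inequalities of Lemma~\ref{leauxz} to dispose of the extra terms generated by the fixed profile $u^*$. Thus, let $\{u_n\}\subset W_{\rm rad}^{1,2}(\re^N)$ be a Palais--Smale sequence, $\mc{J}(u_n)\to c$ and $\mc{J}'(u_n)\to 0$. First I would prove that $\{u_n\}$ is bounded in $W_{\rm rad}^{1,2}(\re^N)$; then, using the compact radial embedding \eqref{compact} in the subcritical range \eqref{sub1} (valid for $N\geq 2$), I would promote the weak limit to a strong one in $L^{p+1}(\re^N)$ and deduce that $\{u_n\}$ is pre-compact.

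For the boundedness, the crucial device is to test $D_u\mc{J}(u_n)$ against $u_n$ and to form the combination $(2+\mu)\mc{J}(u_n)-\langle D_u\mc{J}(u_n),u_n\rangle$ with exactly the exponent $\mu=\min\{1,p-1\}$ of Lemma~\ref{leauxz}(i). A direct computation shows that the purely nonlinear part of this combination equals, pointwise with $a=u^*$ and $s=u_n$, the quantity $T(u_n)-\frac{\mu}{2}p\,|u^*|^{p-1}u_n^2$, where $T$ is the function \eqref{auxfun1}; since $T\geq 0$ by Lemma~\ref{leauxz}(i), the nonlinear contribution is bounded below by $-\frac{\mu}{2}p\int|u^*|^{p-1}u_n^2$. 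The quadratic part collapses to $\frac{\mu}{2}\big(\int|\nabla u_n|^2+\int|(-\D)^{-\frac 1 2}u_n|^2\big)$, which is coercive for the full $W^{1,2}$ norm: by the spectral bound $\l_\b\geq 2$ of Proposition~\ref{Pr.NNN}, equivalently $|\xi|^2+|\xi|^{-2}\geq 2$ on the Fourier side, one has $\int|\nabla u|^2+\int|(-\D)^{-\frac 1 2}u|^2\geq 2\int u^2$, so this energy dominates $\|u\|_{W_{\rm rad}^{1,2}(\re^N)}^2$ up to a fixed constant. Since the left-hand side is $O(1)+o(1)\,\|u_n\|_{W_{\rm rad}^{1,2}(\re^N)}$, boundedness follows once the perturbation term $\int|u^*|^{p-1}u_n^2$ is absorbed.

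The remaining compactness step is standard. Having $\|u_n\|_{W_{\rm rad}^{1,2}(\re^N)}\leq C$, I extract $u_n\rightharpoonup u_0$ weakly and, by the compact radial embedding, $u_n\to u_0$ strongly in $L^{p+1}(\re^N)$. The growth estimate \eqref{ineab} of Lemma~\ref{leauxz}(iii) controls the nonlinearity $|u+u^*|^{p-1}(u+u^*)-|u^*|^{p-1}u^*$, so that $\int\big(|u_n+u^*|^{p-1}(u_n+u^*)-|u_0+u^*|^{p-1}(u_0+u^*)\big)(u_n-u_0)\to 0$. Testing the difference $D_u\mc{J}(u_n)-D_u\mc{J}(u_0)$ against $u_n-u_0$, the quadratic part equals $\int|\nabla(u_n-u_0)|^2+\int|(-\D)^{-\frac 1 2}(u_n-u_0)|^2$ while the nonlinear part tends to $0$; since the whole pairing also tends to $0$ (because $D_u\mc{J}(u_n)\to 0$ in the dual and $u_n-u_0\rightharpoonup 0$ against the fixed functional $D_u\mc{J}(u_0)$), the energy norm, hence the $W_{\rm rad}^{1,2}$ norm, of $u_n-u_0$ tends to $0$. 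This gives the strong convergence and establishes the (PS) condition.

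The hard part will be the boundedness estimate, and within it the absorption of $\int|u^*|^{p-1}u_n^2$ into the coercive term. Exploiting that $u^*$ is a fixed solution with the exponential decay established in Section~\ref{S2.Exp}, I would split $\re^N$ into a large ball $B_R$ and its complement: on $\re^N\setminus B_R$ the weight $|u^*|^{p-1}$ is uniformly small, so that portion is a small multiple of $\int u_n^2\leq \frac{1}{2}\big(\int|\nabla u_n|^2+\int|(-\D)^{-\frac 1 2}u_n|^2\big)$ and is absorbed outright; on $B_R$ one bounds $\int_{B_R}|u^*|^{p-1}u_n^2$ by $\|u^*\|_\infty^{p-1}\|u_n\|_{L^{p+1}(B_R)}^2$ through H\"older together with \eqref{hls}, again a lower-order term in the subcritical range \eqref{sub1}. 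A secondary subtlety is that Lemma~\ref{leauxz}(i) is phrased for $a>0$, $s\geq 0$, whereas $u^*$ and $u_n$ change sign; this is handled by carrying out the pointwise computation with $a=|u^*|$ and $s=|u_n|$ and verifying that the convexity inequality $T\geq 0$ persists through the sign bookkeeping.
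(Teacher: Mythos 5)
Your overall scheme does mirror the paper's proof up to the decisive point: the same combination $(2+\mu)\mc{J}(u_n)-\left\langle D_u\mc{J}(u_n),u_n\right\rangle$ with $\mu=\min\{1,p-1\}$, the same use of Lemma~\ref{leauxz}(i) to reduce the nonlinear contribution to the lower bound $\frac{\mu}{2}\int p|u^*|^{p-1}u_n^2$, and a compactness step via \eqref{compact}, \eqref{sub1} and \eqref{ineab} (your final convergence argument is in fact spelled out more carefully than the paper's). But there is a genuine gap precisely where you yourself locate ``the hard part'': the absorption of $\int p|u^*|^{p-1}u_n^2$. Your ball-splitting only works on $\re^N\setminus B_R$, where exponential decay of $u^*$ makes the weight small. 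On $B_R$ the bound $\int_{B_R}p|u^*|^{p-1}u_n^2\leq p\|u^*\|_\infty^{p-1}\|u_n\|_{L^{p+1}(B_R)}^2\leq C\|u_n\|_{W_{\rm rad}^{1,2}(\re^N)}^2$ is \emph{not} lower order: it is quadratic in the norm, exactly the same order as the coercive term $\frac{\mu}{2}\big(\int|\nabla u_n|^2+\int|(-\D)^{-\frac 1 2}u_n|^2\big)$, and its constant $p\|u^*\|_\infty^{p-1}$ (times Sobolev constants) has no reason to be small --- subcriticality of $p$ buys continuity of the embedding, not smallness of the coefficient. A quadratic term with an $O(1)$ constant cannot be absorbed this way, and the quantity $\frac{\mu}{2}\big(E(u_n)-\int p|u^*|^{p-1}u_n^2\big)$ could a priori be unbounded below, so no bound on $\|u_n\|$ follows. (Trying to rescue it by strong $L^2(B_R)$ convergence on the ball is circular, since that presupposes the boundedness being proved.)

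The paper closes exactly this gap with an ingredient absent from your proposal: the weighted non-local eigenvalue problem \eqref{nonlspe} with weight $a(x)=p|u^*|^{p-1}$, whose first eigenvalue $\l_1^*$ yields the sharp quadratic-form inequality
\begin{equation*}
\tex{\int\limits_{\re^N}|\nabla \psi|^2+\int\limits_{\re^N}|(-\D)^{-\frac 1 2}\psi|^2\;\geq\; \l_1^*\int\limits_{\re^N}p|u^*|^{p-1}\psi^2,}
\end{equation*}
so that the weighted term is subtracted from the energy with the factor $\frac{\mu}{2}\big(1-\frac{1}{\l_1^*}\big)$, which is positive since (invoking Proposition~\ref{Pr.NNN}) $\l_1^*>2$. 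In other words, what is needed is a spectral-gap (positivity of the linearized quadratic form) statement, not a smallness-of-the-weight statement; your decomposition can control the tail of the weight but cannot substitute for the Rayleigh-quotient bound on the bulk. Your secondary remark on the sign bookkeeping in Lemma~\ref{leauxz}(i) (applying it with $a=|u^*|$, $s=|u_n|$) is reasonable and, if anything, more scrupulous than the paper, which applies the lemma without comment; but as written your boundedness step fails, and it is the one step on which the whole (PS) verification rests.
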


\begin{proof}
Let $\{u_n\}$ be a sequence that satisfies the PS condition \eqref{conPS}.
Since $ \mc{J}'(u_n) \to 0$, for any $\e>0$, there exists a
subsequence of
 $\{u_n\}$ (denoted again by $\{u_n\}$) such that
 $$\tex{ \big| \int\limits_{\re^N} \nabla u_n \cdot \nabla \varphi + \int\limits_{\re^N} (-\D)^{-\frac 1 2}
    u_n \cdot
    (-\Delta)^{-\frac 1 2} \varphi  - \int\limits_{\re^N}  \big(|u_n+u^*|^{p} -|u^*|^{p}\big)
     \varphi\big| \leq \e \|\varphi\|_{W_{{\rm rad}}^{1,2}(\re^N)}.}
    $$
Indeed, if $\varphi=u_n$ yields $$\tex{ \big| \int\limits_{\re^N}
|\nabla u_n|^2 + \int\limits_{\re^N} |(-\D)^{-\frac 1 2} u_n|^2 -
\int\limits_{\re^N}  \big(|u_n+u^*|^{p} -|u^*|^{p}\big) u_n\big|
\leq \e \|u_n\|_{W_{{\rm rad}}^{1,2}(\re^N)}.}
    $$
    Additionally, due to the
boundedness of the functional $\mc{J}(u_n)$ we have that
$$\tex{|\mc{J}(u_n)|\leq c,}$$
\begin{align*}
\tex{\hbox{or},\quad \frac{1}{2} \int\limits_{\re^N} |\nabla u_n|^2 + \frac{1}{2}
\int\limits_{\re^N} |(-\D)^{-\frac 1 2} u_n|^2 } & \tex{-
     \frac{1}{p+1} \int\limits_{\re^N}     |u_n+u^*|^{p+1}+  \frac{1}{p+1}
     \int\limits_{\re^N}     |u^*|^{p+1} } \\ & \tex{+  \int\limits_{\re^N}      u_n |u^*|^{p}=c+o(1),}
     \end{align*}
Subsequently, for an appropriate positive constant $\mu\in \re$
$$
\tex{(2+\mu) \mc{J}(u_n) - \left\langle D_u \mc{J}(u_n),u_n\right\rangle
= c+ \e  \|u_n\|_{W_{{\rm rad}}^{1,2}(\re^N)},}$$
or equivalently
\be
\label{splon}
\begin{split}
\tex{\frac{\mu}{2}} & \tex{ \left(  \int\limits_{\re^N} |\nabla u_n|^2 + \int\limits_{\re^N} |(-\D)^{-\frac 1 2} u_n|^2 \right)
-  \frac{2+\mu}{p+1} \int\limits_{\re^N}     |u_n+u^*|^{p+1}+  \frac{2+\mu}{p+1}
     \int\limits_{\re^N}     |u^*|^{p+1} } \\ & \tex{ +  (2+\mu)\int\limits_{\re^N} u_n |u^*|^{p}
     +\int\limits_{\re^N}  \big(|u_n+u^*|^{p} -|u^*|^{p}\big) u_n\big|= c+ \e  \|u_n\|_{W_{{\rm rad}}^{1,2}(\re^N)}}
     \end{split}
     \ee
Now, we analyse the terms
$$\tex{\int\limits_{\re^N}  \big(|u_n+u^*|^{p} -|u^*|^{p}\big) u_n\big| - \frac{2+\mu}{p+1} \int\limits_{\re^N}     |u_n+u^*|^{p+1}+  \frac{2+\mu}{p+1}
     \int\limits_{\re^N}     |u^*|^{p+1} +  (2+\mu)\int\limits_{\re^N} u_n |u^*|^{p}.}
     $$
     Indeed, thanks to Lemma\;\ref{leauxz} (i) we find that
  \be
  \label{inlong}
  \begin{split}
  \tex{\int\limits_{\re^N}  \big(|u_n+u^*|^{p} -|u^*|^{p}\big) u_n\big| - \frac{2+\mu}{p+1} \int\limits_{\re^N}     |u_n+u^*|^{p+1}} & \tex{
  +  \frac{2+\mu}{p+1}
     \int\limits_{\re^N}     |u^*|^{p+1} +  (2+\mu)\int\limits_{\re^N} u_n |u^*|^{p}} \\ & \tex{\geq  \frac{\mu}{2} \int\limits_{\re^N} p |u^*|^{p-1} u_n^2,}
    \end{split}
    \ee
  and, hence, substituting \eqref{inlong} into \eqref{splon} yields to
$$
\tex{\frac{\mu}{2} \left(  \int\limits_{\re^N} |\nabla u_n|^2 + \int\limits_{\re^N} |(-\D)^{-\frac 1 2} u_n|^2-  \int\limits_{\re^N} p |u^*|^{p-1} u_n^2 \right)
= c+ \e  \|u_n\|_{W_{{\rm rad}}^{1,2}(\re^N)}}
    $$
  Next, we apply the spectral theory of the non-local weighted eigenvalue problem
\be
\label{nonlspe} \left\{ \begin{array}{ll} (-\D+(-\D)^{-1})
\psi_1 =\l_1^* a(x) \psi_1 & \hbox{in}\quad \re^N,\\ \quad
\hbox{and} & \lim_{|x|\to +\infty}
\psi(x)=0,\end{array}\right. \ee with a weight of the form
$a(x)=p|u^*|^{p-1}$, and where $\l_1^*$ is the first eigenvalue of the problem
\eqref{nonlspe} associated with the eigenfunction $\psi_1$.
Then, it is easy to prove (see \cite{FiguMit} for any further
details about the spectral theory of certain classes of non-local
operators of this form in bounded domains) that $$
\tex{\int\limits_{\re^N} |\nabla \psi|^2  + \int\limits_{\re^N} |(-\D)^{-\frac 1 2} \psi|^2 \geq \l_1^*
\int\limits_{\re^N} p |u^*|^{p-1} \psi^2,\quad \hbox{for any}
\quad \psi\in W^{1,2}(\re^N)}$$
Note that, to extend the spectral theory developed in \cite{FiguMit} for bounded domains, one needs to follow the analysis carried out above in Section\;\ref{S2.Exp}
   for these non-local problems, assuming exponential decay for the solutions.

Then, using the weighted eigenvalue problem \eqref{nonlspe}
we find that
$$
\tex{\frac{\mu}{2} \left(1-\frac{1}{\l_1^*}\right) \left(  \int\limits_{\re^N} |\nabla u_n|^2 + \int\limits_{\re^N} |(-\D)^{-\frac 1 2} u_n|^2 \right)
\leq c+ \e  \|u_n\|_{W_{{\rm rad}}^{1,2}(\re^N)}}
    $$
Note that $\mu >0$ and due to Proposition\;\ref{Pr.NNN} we can show that $\l_1^*>2$ so that
$$\tex{\frac{\mu}{2} \left(1-\frac{1}{\l_1^*}\right)>0,}$$
then,
$$
\tex{\frac{\mu}{2} \left(1-\frac{1}{\l_1^*}\right) \int\limits_{\re^N} |\nabla u_n|^2
\leq c+ \e  \|u_n\|_{W_{{\rm rad}}^{1,2}(\re^N)}}.
    $$
     Moreover,
   $$
\tex{\frac{\mu}{2} \left(1-\frac{1}{\l_1^*}\right)\|u_n\|_{W_{{\rm rad}}^{1,2}(\re^N)}^2 - \frac{\mu}{2} \left(1-\frac{1}{\l_1^*}\right) \int\limits_{\re^N} u_n^2
\leq c+ \e  \|u_n\|_{W_{{\rm rad}}^{1,2}(\re^N)}},
    $$
    and, hence, thanks to the Sobolev's embedding \eqref{compact} (see Talenti \cite{Tal} for best possible constants) and
    rearranging terms we arrive at
    $$
\tex{\frac{\mu}{2} \left(1-\frac{1}{\l_1^*}\right)K\|u_n\|_{W_{{\rm rad}}^{1,2}(\re^N)}^2
\leq c+ \e  \|u_n\|_{W_{{\rm rad}}^{1,2}(\re^N)}},
    $$
    for a positive constant $K$.
 Therefore, we finally get the boundedness of the norms in
$W_{{\rm rad}}^{1,2}(\re^N)$ for the elements of the subsequence
$\{u_n\}$.

To conclude the prove we show the strong convergence of the subsequence $\{u_n\}$. Indeed, applying the
Hardy-Littlewood-Sobolev's inequality \eqref{hls}, inequality \eqref{ineab} and the compact Sobolev's embedding \eqref{compact},
for $N\geq 2$ and $1<p<p^*$ for \eqref{sub1},
we find that
\begin{align*}
\tex{ |\mc{J}(u)| } & \tex{\leq \left|\frac{1}{2} \left(\int\limits_{\re^N} |\nabla u|^2 + \int\limits_{\re^N} |(-\D)^{-\frac 1 2} u|^2\right)
  + \frac{1}{p+1} \int\limits_{\re^N}     |u+u^*|^{p+1}-  \frac{1}{p+1} \int\limits_{\re^N}
      |u^*|^{p+1} +  \int\limits_{\re^N}  u
      |u^*|^{p}\right|}\\ & \tex{
      \leq K \left( \|u_n\|_{W_{\rm rad}^{1,2}(\re^N)}^2  +\|u^*\|^{p-1}_{L^p} \|u_n\|_{L^{2p}}+ \|u_n\|^{p+1}_{L^{p+1}}\right)\leq K\|u_n\|_{W_{\rm rad}^{1,2}(\re^N)}^2,}
\end{align*}
for a positive constant $K$ and, hence, proving the (PS) condition.
\end{proof}

To conclude the proof of the conditions of the Mountain Pass
Theorem \ref{MPT}, we prove the following result.

\begin{lemma}
\label{leMPT2}
Let the functional $\mc{J} \in C^1(W_{{\rm rad}}^{1,2}(\re^N),\re)$ be the functional denoted by \eqref{funnM}. Then,
\begin{enumerate}
\item[a)] There exist $\rho, \a >0$ such that
$$\mc{J}_{\p B(0,\rho)} \geq \a,$$
where $B(0,\rho)$ represents the ball centered at the origin and of radius $\rho>0$;
\item[b)] also, there exists $e\in W^{1,2}(\re^N) \setminus B(0,\rho)$ such that $\mc{J}(e)\leq 0.$
\end{enumerate}
\end{lemma}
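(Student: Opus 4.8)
The plan is to reproduce, for $\mc{J}$, the two-step scheme already carried out for $\mc{F}$ in Lemma \ref{leMPT22}, the only genuinely new feature being the quadratic cross term generated by the shift $u^*$. This term will be controlled by combining the pointwise estimate Lemma \ref{leauxz}(ii) with the weighted spectral inequality attached to \eqref{nonlspe}.

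For part a) I first observe that the nonlinear part of $\mc{J}$ is exactly $-\int_{\re^N} H(u^*,u)$, with $H$ the function in \eqref{auxfun2}. Lemma \ref{leauxz}(ii), applied with $a=u^*$ and $s=u$, yields for every small $\e>0$
\be
\tex{H(u^*,u)\le \tfrac{p}{2}\,|u^*|^{p-1}u^2+\e\,|u^*|^{p-1}u^2+C_\e\,|u|^{p+1},}
\ee
and hence
$$\tex{\mc{J}(u)\ge \tfrac12\Big(\int_{\re^N}|\nabla u|^2+\int_{\re^N}|(-\D)^{-\frac12}u|^2\Big)-\big(\tfrac{p}{2}+\e\big)\int_{\re^N}|u^*|^{p-1}u^2-C_\e\int_{\re^N}|u|^{p+1}.}$$
Next I would invoke the weighted eigenvalue problem \eqref{nonlspe}, whose weight is $a(x)=p|u^*|^{p-1}$ and whose first eigenvalue obeys $\l_1^*>1$ (indeed $\l_1^*>2$, exactly as used in the proof of Lemma \ref{leMPT1}). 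Its Rayleigh inequality bounds $\int_{\re^N}p|u^*|^{p-1}u^2$ by $\frac{1}{\l_1^*}\big(\int_{\re^N}|\nabla u|^2+\int_{\re^N}|(-\D)^{-\frac12}u|^2\big)$, which absorbs the $\tfrac p2$-term and, for $\e$ small, the $\e$-term into the quadratic form while leaving a strictly positive multiple $\tfrac12\big(1-\tfrac1{\l_1^*}(1+\tfrac{2\e}{p})\big)$ of it. The residual term $C_\e\int_{\re^N}|u|^{p+1}$ is estimated through the compact Sobolev embedding \eqref{compact} in the range \eqref{sub1}, and, using the equivalence of the quadratic form with the $W^{1,2}_{\rm rad}$-norm already exploited in Lemma \ref{leMPT22}, one gets $\mc{J}(u)\ge c_0\|u\|_{W_{\rm rad}^{1,2}(\re^N)}^2-C\|u\|_{W_{\rm rad}^{1,2}(\re^N)}^{p+1}$ with $c_0>0$. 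On the sphere $\|u\|=\rho$ this equals $\rho^2(c_0-C\rho^{p-1})\ge\a>0$ for $\rho$ small, since $p>1$, which is part a).

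For part b) I fix $u\neq0$ and follow $\mc{J}(tu)$ as $t\to+\infty$. The two quadratic terms grow like $t^2$ and the linear term $t\int_{\re^N}u|u^*|^p$ like $t$, whereas the leading nonlinear contribution $-\frac{1}{p+1}\int_{\re^N}|tu+u^*|^{p+1}$ behaves like $-\frac{t^{p+1}}{p+1}\int_{\re^N}|u|^{p+1}$. Since $p>1$ this term dominates, so $\mc{J}(tu)\to-\infty$, and $e:=t_0u$ with $t_0$ large satisfies $e\notin B(0,\rho)$ and $\mc{J}(e)\le0$; this is the same mechanism as in \eqref{minwfu}.

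The main obstacle is part a): the cross term $\tfrac p2\int_{\re^N}|u^*|^{p-1}u^2$ is of the same order as the coercive part of $\mc{J}$, so positivity near the origin is not automatic. It survives precisely because the weighted first eigenvalue satisfies $\l_1^*>1$, i.e. $1-\tfrac1{\l_1^*}>0$, a fact guaranteed here by the strict spectral bound for \eqref{nonlspe} obtained via Proposition \ref{Pr.NNN}. Everything else is routine and parallel to Lemma \ref{leMPT22}.
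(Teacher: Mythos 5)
Your proposal is correct and takes essentially the same route as the paper: part a) combines Lemma \ref{leauxz}(ii) with the weighted spectral inequality attached to \eqref{nonlspe}, your smallness condition $1-\frac{1}{\l_1^*}\big(1+\frac{2\e}{p}\big)>0$ being exactly the paper's choice $\e<\frac{p}{2}(\l_1^*-1)$, and part b) is the same $t^{p+1}$-domination mechanism as in \eqref{minw}. Your explicit remark that the quadratic form controls the full $W_{\rm rad}^{1,2}$-norm (via Proposition \ref{Pr.NNN}) only makes precise a step the paper uses implicitly.
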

\begin{proof}
First, taking $u\in W_{{\rm rad}}^{1,2}(\re^N)\setminus \{0\}$ and
assuming the Sobolev embedding \eqref{compact},
 we arrive at
     \be
     \label{mpcon1}
     \tex{\mc{J}(u)= K \|u\|_{W_{{\rm rad}}^{1,2}(\re^N)}^2 + o(\|u\|_{W_{{\rm rad}}^{1,2}(\re^N)}^2)
      \quad \hbox{as}\quad u\to 0.}
    \ee
     Then, to obtain such a result we use the inequality \eqref{auxsol} in Lemma\;\ref{leauxz} so that
     \be
     \label{satin}
     \tex{\int\limits_{\re^N} H(u^*,u) - \frac{p}{2}|u^*|^{p-1} u^2 \leq \e \int\limits_{\re^N} |u^*|^{p-1} u^2+ C_\e \|u\|_{L^{p+1}(\re^N)}^{p+1},}
     \ee
     with $C_\e$ a positive constant that depends on $\e>0$.

Moreover, thanks to the non-local weighted eigenvalue problem \eqref{nonlspe}
and the inequality \eqref{satin}
 $$\tex{ \mc{J}(u) \geq \frac{1}{2}\big(1-\frac{1}{\l_1^*}\big)   \|u\|_{W_{{\rm rad}}^{1,2}(\re^N)}^2 -\frac{\e}{p\l^*}
   \|u\|_{W_{{\rm rad}}^{1,2}(\re^N)}^2 - C_\e  \|u\|_{L^{p+1}(\re^N)}^{p+1}.}$$
Consequently, choosing $\e< \frac{p}{2} (\l_1^* -1)$, we finally
have \eqref{mpcon1}. Thus, we can assure that there exists $\rho,
\a >0$ such that $$\mc{J}_{\p B(0,\rho)} \geq \a.$$ Additionally,
assuming the compact Sobolev's embedding \eqref{compact}, as well as the Hardy--Littlewood--Sobolev's inequality \eqref{hls}
  and satisfying the decay condition at infinity \eqref{infcon}, we can see that, applying H\"{o}lder's
  inequality,
\be
\label{minw}
    \begin{split} & \tex{\mc{J}(tu) =
    \frac{t^2}{2} \int\limits_{\re^N} |\nabla u|^2 + \frac{t^2}{2} \int\limits_{\re^N} |(-\D)^{-\frac 1 2} u|^2 -\frac{1}{p+1} \int\limits_{\re^N}
    |tu+u^*|^{p+1}} \\ & \tex{ +  \frac{1}{p+1} \int\limits_{\re^N}     |u^*|^{p+1} +  \frac{t}{p+1} \int\limits_{\re^N}
     u |u^*|^{p+1}  } \\ & \tex{\leq \frac{t^2K}{2} \|u\|^2_{W_{{\rm rad}}^{1,2}(\re^N)}
    -\frac{1}{p+1} \int\limits_{\re^N}     |tu+u^*|^{p+1}+  \frac{1}{p+1} \int\limits_{\re^N}     |u^*|^{p+1} +  \frac{t}{p+1} \int\limits_{\re^N}      u |u^*|^{p+1}
    } \\ & \tex{ \leq \frac{t^2K}{2} \|u\|^2_{W_{{\rm rad}}^{1,2}(\re^N)}
     -\frac{1}{p+1} \int\limits_{\re^N}     |tu+u^*|^{p+1}+ K  \|u^*\|^{p+1}_{W_{{\rm rad}}^{1,2}(\re^N)}}
     \\ & \tex{+
    \frac{t}{p+1}
    \|u^*\|_{L^{p}(\re^N)}^{p-1} \|u\|_{L^{2}(\re^N)}
   \to -\infty,
    }
    \end{split}
    \ee
as $t\to \infty$ and for a positive constant $K$, proving the
second condition of the lemma.

\end{proof}



 \subsection{Expression for the second solution for the equation \eqref{nonloc}}


\noindent Finally, combining  the results obtained by
Lemmas\;\ref{leMPT1} and \ref{leMPT2} with the proof of the (PS)
condition for the functional $\mc{J}$ denoted by \eqref{funnM}, we
ascertain the existence of a non-trivial solution for the equation
\eqref{nonlocM} denoted by $u_0^*$ such that
\begin{align*}
\tex{ \mc{J}(u_0^*) }�& \tex{=\frac{1}{2} \int\limits_{\re^N}
|\nabla u_0^*|^2 + \frac{1}{2} \int\limits_{\re^N} |(-\D)^{-\frac
1 2} u_0^*|^2 -
     \frac{1}{p+1} \int\limits_{\re^N}     |u_0^*+u^*|^{p+1} }\\ & \tex{+  \frac{1}{p+1} \int\limits_{\re^N}
        |u^*|^{p+1} +  \frac{1}{p+1} \int\limits_{\re^N}u_0^*
        |u^*|^{p+1}=c_0,\quad \mbox{with}
    }
\end{align*}
 $$ c_0=\inf_{\g\in\G} \max_{\theta \in [0,1]}
\mc{J}(\g(\theta)), \quad \hbox{where}\quad \G:=\{\g\in
C([0,1],W_{{\rm rad}}^{1,2}(\re^N))\,;\, \g(0)=0,\; \g(1)=tu\},$$
for any $u\in
W_{{\rm rad}}^{1,2}(\re^N)\setminus \{0\}$.  Consequently, by construction of
the equation \eqref{nonlocM}, we obtain a second solution of the
equation \eqref{nonloc} of the form $$u_* = u_0^*+u^*.$$

\begin{remark}
Here we only obtain the existence of two solutions. Moreover, since those solutions could be oscillatory of changing sign
 we cannot actually compare them, nor knowing which ones they are in the subsequent sequence of solutions obtained via
 Lusternik--Schnirel'man's theory.
\end{remark}


\section{Towards to a first countable family of L--S critical
points}
 \label{S4}

\noindent In order to estimate the
number of critical points of a functional, we shall need to apply
 Lusternik--Schnirel'man's (L--S) classic theory of calculus of
variations. Thus, the number of critical points of the functional
\eqref{funn} will also depend on the category of a functional
subset (see below some details).

 There are very important applications of minimax methods to establish the existence of multiple critical points
 of functionals, which are invariant under a group of symmetries $\mc{G}$, in the sense that
 $$\mc{F}(h u)=\mc{F}(u),\quad \forall h\in\mc{G},$$
 and $u\in W^{1,2}(\re^N)$ (a Banach space for our particular case). Note that, normally, the group of symmetries is
 $$\mc{G}:=\{{\rm id}, -{\rm id}\}.$$
 As a simple case, suppose the functional $\mc{F}$ to be even, then $\mc{F}(u)=\mc{F}(-u)$ for any $u$ in the appropriate Banach space.
 One of the most famous methods to get these multiplicity results is due to Lusternik--Schnirel'man for symmetric
 functionals (see \cite[Chapters 8, 9, 10]{Ra2} for further details and \cite{PV} for a discussion of this methodology for a similar problem to the one under consideration here).

 Basically, this topological theory for potential compact operators is a
natural extension of the standard minimax principles which
characterize the eigenvalues of linear compact self-adjoint
operators. Applying the Calculus of Variations to the eigenvalue
problem in an appropriate functional setting, one can see that the
critical values of the functional involved are precisely the
eigenvalues of the problem. Indeed, performing this characterization in the unit sphere $S^{N-1}$ one
has that the eigenvalues are the critical points of the functional associated to a linear operator $L$ in the unit ball
\be
\label{unitb}
\tex{\p \Sigma:=\{v\,:\, \|v\|=1\}.}
\ee
To extend these ideas to nonlinear
potential operators, Lusternik--Schnirel'man introduced the
concept of {\em category} getting a lower estimate  of the number
of critical points on the projective spaces. Indeed, this is estimated by the topological concept of the {\em genus} of a set
introduced by Krasnosel'skii in the 1951 \cite{Kras51}, avoiding the transition to the projective spaces obtained by
identifying points of the sphere which are symmetric with respect to the centre, needed to estimate the category of Lusternik--Schnirel'man. Thus, the
genus of a set provides us with a lower bound of the category.

Moreover, an estimate of the number of critical
points of a functional is at the same time an estimate of the
number of eigenvectors of the gradient functional (in
Krasnosel'skii's terms) and, hence, of the number of solutions of
the associated nonlinear equation.

In our particular case, this functional subset is the following:
\begin{equation}
 \label{R0}
    \tex{ \mc{R}_{0}=\Big\{v\in W^{1,2}(\re^N)\,:\,{\bf H}(v) \equiv \int\limits_{\re^N} |\nabla v|^2 +
     \int\limits_{\re^N} |(-\D)^{-\frac 1 2}v|^2 =1\Big\},}
\end{equation}
in the spirit of the eigenvalues of linear operators in the unit ball \eqref{unitb}.
According to the L--S approach (see \cite{Berger, KrasZ}, etc.),
in order to obtain the critical points of a functional on the
corresponding functional subset, $\mc{R}_{0}$, one needs to
estimate the category $\rho$ of that functional subset. Thus, the
category will provide us with the (minimal) number of critical
points that belong to the subset $\mc{R}_{0}$. Namely, similar to
\cite{PV, GMPSob}, we, formally, may use a standard result:
\begin{lemma}
\label{le37}
The category of the manifold $\mc{R}_0$, denoted by $\rho(\mc{R}_{0})$, is given by the number of eigenvalues
(with multiplicities) of the corresponding linear eigenvalue
problem satisfying:
 \be
 \label{rho1}
  \tex{
\rho(\mc{R}_{0}) = \sharp \{\l_\b >0\}, \quad \mbox{where} } \ee
\be
\label{rho2}
 \tex{
{\bf L} \psi_\b \equiv -\D \psi_\b + (-\D)^{-1} \psi_\b= \l_\b
\psi_\b \,\,\,\,\mbox{in}
 \,\,\,\, \re^N, \quad \lim_{|x|\to \infty}\psi_\b(x)=0.
 }
 \ee
 \end{lemma}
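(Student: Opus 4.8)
The plan is to recognise $\mc{R}_0$ as the unit sphere of the quadratic form generated by the operator ${\bf L}=-\D+(-\D)^{-1}$, and then to reduce the computation of its category to the classical Lusternik--Schnirel'man count for such spheres in a separable Hilbert space. First I would note that, integrating by parts and discarding the boundary contributions along the radii $R_n$ furnished by Lemma\;\ref{leexpin}, the constraint functional in \eqref{R0} is exactly the Dirichlet form of ${\bf L}$,
\be
\tex{{\bf H}(v)=\int_{\ren}|\n v|^2+\int_{\ren}\big|(-\D)^{-\frac12}v\big|^2=\int_{\ren}\big((-\D)+(-\D)^{-1}\big)v\cdot v=\langle {\bf L} v,v\rangle .}
\ee
Consequently $\langle u,v\rangle_{\bf H}:=\int_{\ren}\n u\cdot\n v+\int_{\ren}(-\D)^{-\frac12}u\,(-\D)^{-\frac12}v$ is an inner product on $W^{1,2}_{\rm rad}(\ren)$ equivalent to the usual one, and $\mc{R}_0=\{v:{\bf H}(v)=1\}$ is precisely the unit sphere of the Hilbert space $\big(W^{1,2}_{\rm rad}(\ren),\langle\cdot,\cdot\rangle_{\bf H}\big)$.

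Next I would feed in the spectral facts established in Section\;\ref{S2.Exp}. By Proposition\;\ref{marce} the operator ${\bf L}$ has compact resolvent and a discrete spectrum $0<\l_1\le\l_2\le\cdots\to+\iy$, while Proposition\;\ref{Pr.NNN} gives $\l_\b\ge2>0$ for every $\b$; hence \emph{all} eigenvalues are positive, so $\sharp\{\l_\b>0\}$ equals the total number of eigenvalues counted with multiplicity, i.e. $\dim W^{1,2}_{\rm rad}(\ren)$, and the eigenfunctions $\{\psi_\b\}$ form a complete orthogonal system. To bound $\rho(\mc{R}_0)$ from below, for each finite $k\le\sharp\{\l_\b>0\}$ I would restrict to $E_k:=\mathrm{span}\{\psi_1,\dots,\psi_k\}$: the section $\mc{R}_0\cap E_k$ is carried by an odd homeomorphism onto the Euclidean sphere $S^{k-1}$, whose Krasnosel'skii genus equals $k$ by the Borsuk--Ulam theorem \cite{Kras51}, and since the genus is a lower bound for the category this yields $\rho(\mc{R}_0)\ge k$ for every admissible $k$.

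The matching upper bound is the standard result for positive potential operators alluded to in \cite{PV,GMPSob} (see also \cite{Berger,KrasZ,Ra2}): for a positive self-adjoint ${\bf L}$ with compact inverse, the category of the constraint manifold $\{{\bf H}(v)=1\}$ is controlled by, and in fact coincides with, the eigenvalue count, so completeness of $\{\psi_\b\}$ forces $\rho(\mc{R}_0)=\sharp\{\l_\b>0\}$; when there are infinitely many eigenvalues both sides equal $+\iy$, which is exactly what the subsequent L--S min--max needs to produce the countable family. The main obstacle is precisely this infinite-dimensional, category-versus-genus step: a bare sphere of an infinite-dimensional Hilbert space is contractible, so its ordinary category is useless and the finite genus values live only on the compact symmetric sections $\mc{R}_0\cap E_k$ entering the min--max; one must therefore invoke the compact Sobolev embedding of Remark\;\ref{rem:compact} (hence a Palais--Smale/deformation property) to legitimise identifying this genus filtration with the spectrum of ${\bf L}$. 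It is this reduction that the authors, with the qualification ``formally'', import from \cite{PV,GMPSob} rather than redo from scratch.
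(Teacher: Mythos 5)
Your proposal is correct and follows essentially the same route as the paper: the paper likewise rewrites the constraint as ${\bf H}(u)=\int {\bf L}u\,u$, expands $u$ in the eigenfunctions of ${\bf L}$ to identify $\mc{R}_0$ with the ellipsoid $\sum_k a_k^2\l_k=1$ (using $\l_\b>0$ from Propositions\;\ref{marce} and \ref{Pr.NNN}), and concludes that $\mc{R}_0$ contains symmetric spheres of arbitrary finite dimension, hence has infinite category matching the infinite count of positive eigenvalues. Your additions --- the explicit Borsuk--Ulam/genus lower bound on the sections $\mc{R}_0\cap E_k$, and the caveat that the full infinite-dimensional sphere is contractible so only the genus filtration is meaningful --- are sound refinements of steps the paper treats implicitly under its ``formally'' qualification, not a different argument.
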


\begin{proof}
 Let $\l_{\b}$ be the $\b$-eigenvalue of the linear bi-harmonic problem \eqref{rho2} such that
 \be
 \label{psi11}
  \psi_\b:=\sum_{k\geq 1} a_k
\hat{\psi}_k,
  \ee
taking into consideration the multiplicity of the $\b$-eigenvalue, under the natural ``normalizing"
constraint $$\sum_{k\geq 1} a_k=1.
  $$
   Here, \ef{psi11} represents
the associated eigenfunctions to the eigenvalue $\l_{\b}$ and
$$\{\hat{\psi}_1,\cdots,\hat{\psi}_{M_\b}\},$$
is a basis of the
eigenspace of dimension $M_\b$.
Moreover, assume a critical point
\be
\label{u88}
u=\sum_{k\geq 1} a_k
\hat{\psi}_k,
\ee
belonging to the functional subset \eqref{R0}  and the eigenspace of dimension $M_\b$, i.e, any real linear combination of orthonormal
 eigenfunctions $\{\hat\psi_\b\}$.

 Thus, substituting \eqref{u88} (since we are looking for solutions of that form) into the equation \eqref{nonloc} and using the expression of the spectral problem \eqref{rho2} yields
 $$\tex{\sum_{k\geq 1} a_k \l_k \hat{\psi}_k - \left(\sum_{k\geq 1} a_k \hat{\psi}_k\right)^{p}=0,}
 $$
 which provides us with an implicit condition for the coefficients $a_k$ corresponding to the critical point \eqref{u88}. Indeed, assuming normalized
 eigenfunctions $\psi_\b$, i.e.,
 $$\int\psi_\b^2=1,$$
  and multiplying by $u$ in \eqref{nonloc} and
 integrating we have that
 $$\tex{\sum_{k\geq 1} a_k^2 \l_k- \int_{\re^N} \left(\sum_{k\geq 1} a_k \hat{\psi}_k\right)^{p+1}=0.}$$
   Furthermore, taking into account that $u\in \mc{R}_{0,\l}$  re-writing down (\ref{R0})
 \be
 \label{RR12}
  \tex{
{\bf H}(u)= \int {\bf L}u\, u, }
 \ee
 and substituting into it any real linear combination of orthonormal
 eigenfunctions $\{\psi_\b\}$ yields 
 \[\sum_{k} a_k^2 \l_k=1.\]
  Therefore, ${\mc R}_0$ contains a sphere of an arbitrary bounded
   dimension. Hence, its category is then infinite.
    \end{proof}

 One can say that the functional
\eqref{funn} is between at least two values, a maximum and a minimum one,
$$c \leq  \mc{F} (u)\leq c^*.$$
 Therefore, having at least two positive critical points for such a functional
 and since the L--S characterisation provides us with a lower bound for solutions, but not exactly how many,
we should not ruled out the situation in which there are infinitely many critical points.

Note that $\rho(\mc{R}_0)$ measures, at least, a lower bound of the total of number of L--S critical points.
 Moreover, thanks to the spectral theory shown above in Section\;\ref{S2.Exp}
  we have the sufficient spectral information about the eigenvalue problem
 \ef{rho2} to obtain a sharp estimate of the category \ef{rho1}.

 \subsection{L--S sequence of critical points}

 Thus, we look for critical values $c_\b$ denoted by
 \begin{equation}
\label{cat} \tex{c_\b := \inf\limits_{A\in \mc{A}_\b}
\sup\limits_{u\in A} \mc{F} (u)} \quad  (\b=1,2,3,...),
\end{equation}
corresponding to the critical points  $\{u_\b\}$
 of the functional $\mc{F}(u)$ \eqref{funn} on the set $\mc{R}_{0}$,
 where
 $$
  \mc{A}_\b :=\{A\,:\, A\subset
\mc{R}_{0},\,\hbox{compact subsets},\quad A=-A\quad
\hbox{and}\quad \rho(A) \geq \b\},
  $$
   is the class of closed
sets in $\mc{R}_{0}$ such that, each member of $\mc{A}_\b$ is of
genus (or category) at least $\b$ in $\mc{R}_{0}$.
The fact that
$\mc{A}=-\mc{A}$ comes from the definition of genus
(Krasnosel'skii \cite[p.~358]{Kras}) such that, if we denote by
$\mc{A}^*$ the set disposed symmetrically to the set $\mc{A}$, $$
  \mc{A}^*=\{ v\,:\, v^*=-v\in \mc{A}\},
  $$
   then, $\rho(\mc{A})=1$
when each simply connected component of the set $\mc{A} \cup
\mc{A}^*$ contains neither of the pair of symmetric points $v$ and
$-v$. Furthermore, $\rho(\mc{A})=\b$ if each subset of $\mc{A}$ can
be covered by, a minimum, $\b$ sets of genus one, and without the
possibility of being covered by $\b-1$ sets of genus one.

Note that just applying the definition of those critical points \eqref{cat} we have the next result.
\begin{lemma}
\label{monoc}(Monotonicity property of the genus)
Let $c_j$ be the critical points defined by \eqref{cat}. Then,
\be
\label{mon37} c_1\leq c_2 \leq \cdots \leq c_\b,
\ee
with $\b$ standing for the category of $\mc{R}_0$.
\end{lemma}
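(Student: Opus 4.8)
The plan is to deduce the chain of inequalities \eqref{mon37} purely from the variational characterization \eqref{cat}, exploiting the fact that the admissible classes $\mc{A}_\b$ form a \emph{decreasing} (nested) family as $\b$ increases. The whole argument rests on a single structural observation about genus, so I expect no serious obstacle; the only point requiring care is to pin down the correct direction of the inclusion and of the resulting infimum comparison.

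First I would record the nesting property of the classes: I claim that $\mc{A}_{\b+1} \subseteq \mc{A}_{\b}$ for every $\b \geq 1$. Indeed, by definition every $A \in \mc{A}_{\b+1}$ is a compact, symmetric subset of $\mc{R}_0$ with $\rho(A) \geq \b+1$; since $\b+1 \geq \b$, such an $A$ automatically satisfies $\rho(A) \geq \b$, and therefore also belongs to $\mc{A}_{\b}$. This is where the monotonicity of the genus in its threshold parameter is used, and it is the only genuinely ``topological'' input needed; everything else is elementary.

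Next I would invoke the standard fact that taking an infimum over a smaller index set can only increase (or leave unchanged) its value. Applying this to the functional $A \mapsto \sup_{u\in A}\mc{F}(u)$ over the nested classes, the inclusion $\mc{A}_{\b+1}\subseteq \mc{A}_{\b}$ yields
\[
c_\b = \inf_{A\in \mc{A}_\b}\sup_{u\in A}\mc{F}(u) \ \leq\ \inf_{A\in \mc{A}_{\b+1}}\sup_{u\in A}\mc{F}(u) = c_{\b+1}.
\]
Iterating this elementary step for $\b = 1, 2, \ldots$ up to the category of $\mc{R}_0$ produces the full chain $c_1 \leq c_2 \leq \cdots \leq c_\b$, which is exactly \eqref{mon37}. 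I would close by noting that the values $c_\b$ are finite and well defined precisely because $\rho(\mc{R}_0)$ is large enough (infinite, by Lemma~\ref{le37}) to guarantee that each class $\mc{A}_\b$ is nonempty, and because $\mc{F}$ is bounded below on $\mc{R}_0$ by the coercivity estimates established earlier.
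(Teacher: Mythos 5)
Your proof is correct and follows essentially the same route as the paper: the key observation in both is that any admissible set for $c_{\b+1}$ (having genus at least $\b+1$) is automatically admissible for $c_\b$, so the infimum defining $c_\b$ is taken over a larger class. The paper merely unpacks your one-line inclusion $\mc{A}_{\b+1}\subseteq\mc{A}_\b$ via an $\e$-argument, choosing a near-optimal $A_1\in\mc{A}_{\b+1}$ with $\sup_{u\in A_1}\mc{F}(u)<c_{\b+1}+\e$ and comparing, which is the same mechanism stated less directly.
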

\begin{proof}
Taking $\e>0$, due to
definition of the critical values $c_{\b+1}$, we have that a set
$A_1 \in \mc{A}_{\b+1}$ exists, such that
  $$
  \tex{\sup_{v\in A_1} \mc{F}
(u) < c_{\b+1} + \e.}
$$
 Hence, if $A_1$ contains a subset $A_0 \in
\mc{A}_\b$ such that
  $$
  \tex{\sup_{u\in A_0} \mc{F} (u) \leq
\sup_{u\in A_1} \mc{F} (v)< c_{\b+1} + \e,} \quad \mbox{and}
  $$
 $$
 \tex{c_\b
=\inf_{\mc{A}\in \mc{A}_\b} \sup_{u\in \mc{A}} \mc{F} (u) \leq
\sup_{u\in \mc{A}_1} \mc{F} (u)< c_{\b+1} + \e,}
   $$
then,
$$c_\b < c_{\b+1},$$
which completes the proof.
\end{proof}
   Roughly speaking, since the dimension of the sets
$\mc{A}$ belonging to the classes of sets $\mc{A}_\b$ increases
with $\b$ such that
$$\mc{A}_1\supset \mc{A}_2\supset \ldots \supset \mc{A}_\b,$$
 this guarantees that the critical points delivering
critical values \eqref{cat} are all different.
Hence, to get those
critical values we need to estimate the category $\rho$ of that
set $\mc{R}_{0}$.

Additionally, for this functional
we show the
following particular result (see \cite[Chap.~9]{Ra2} for any further details) which provides us with a countable family of
critical points for the functional $\mc{F}$ \eqref{funn} following
the spirit of the Mountain Pass Theorem.

\begin{theorem}
\label{confam} Let $\mc{F} \in C^1(W_{{\rm
rad}}^{1,2}(\re^N),\re)$ be the functional defined by \eqref{funn}
with $p$ an odd number, $C^1$ and $\mc{F}(0)=0$, such that the
conditions of the Mountain Pass Theorem  proved in
Lemma\;$\ref{leMPT22}$ are satisfied. Then, the functional
$\mc{F}$ possesses
a countable number of critical values.
\end{theorem}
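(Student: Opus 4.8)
The plan is to exploit the $\mathbb{Z}_2$-symmetry of $\mc{F}$ and to apply the symmetric (even) version of the Mountain Pass Theorem together with the Lusternik--Schnirel'man genus machinery already set up in \eqref{cat}. Since $p$ is an odd number, the nonlinear term reduces to the genuinely odd nonlinearity $|u|^{p-1}u=u^{p}$, so that $\mc{F}(-u)=\mc{F}(u)$; that is, $\mc{F}$ is invariant under the group $\mc{G}=\{{\rm id},-{\rm id}\}$ and its gradient is odd. This is precisely the structural hypothesis required by the symmetric critical point theory of Ambrosetti--Rabinowitz \cite{AmRa,Ra2}.

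First I would collect the hypotheses already verified: $\mc{F}\in C^{1}(W_{{\rm rad}}^{1,2}(\re^N),\re)$ with $\mc{F}(0)=0$; the Palais--Smale condition (Lemma \ref{leMPT12}); and the geometric conditions a) and b) of Lemma \ref{leMPT22}, which give $\rho,\a>0$ with $\mc{F}_{\p B(0,\rho)}\ge\a$ and the fact that $\mc{F}(tu)\to-\infty$ as $t\to\infty$ along any ray, see \eqref{minwfu}. The latter must be upgraded to the statement that, on every finite-dimensional subspace $E'\subset W_{{\rm rad}}^{1,2}(\re^N)$, the set $\{u\in E':\mc{F}(u)\ge 0\}$ is bounded. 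This is immediate because on $E'$ all norms are equivalent, so the coercive quadratic part is controlled by $\|u\|^2$ while the negative term $-\frac{1}{p+1}\int|u|^{p+1}$ grows like $-\|u\|^{p+1}$ with $p+1>2$, forcing $\mc{F}\to-\infty$ as $\|u\|\to\infty$ in $E'$.

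With these ingredients I would introduce the min-max values $c_\b$ of \eqref{cat} over the genus classes $\mc{A}_\b$ of symmetric compact subsets of the manifold $\mc{R}_0$ in \eqref{R0}. Lemma \ref{le37} shows $\rho(\mc{R}_0)=\infty$, so each class $\mc{A}_\b$ is nonempty for every $\b\in\N$, and each $c_\b$ is a well-defined finite positive number, bounded below by $\a$ thanks to Lemma \ref{leMPT22}a). Invoking the equivariant deformation lemma --- whose validity rests on the (PS) condition of Lemma \ref{leMPT12} --- together with the monotonicity and subadditivity properties of the Krasnosel'skii genus, one shows in the standard way that each level $c_\b$ is a critical value of $\mc{F}$, and that whenever several consecutive levels coincide the genus of the corresponding critical set exceeds one, producing infinitely many critical points. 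Finally, Lemma \ref{monoc} gives $c_1\le c_2\le\cdots\le c_\b\le\cdots$, and since $\rho(\mc{R}_0)=\infty$ the sequence $\{c_\b\}$ is genuinely infinite, yielding the asserted countable family of critical values.

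The main obstacle will be the deformation/compactness step: in the unbounded domain $\re^N$ the usual deformation argument is available only because we have restricted to the radial subspace, where the embedding \eqref{compact} into $L^{p+1}$ is compact in the subcritical range \eqref{sub1}, and because the (PS) condition of Lemma \ref{leMPT12} is a genuinely global statement. One must therefore be careful that the min-max levels $c_\b$ do not escape to $+\infty$ prematurely and that the symmetric deformation can be realized inside $\mc{R}_0$; both points reduce to the already established (PS) property and to the infiniteness of $\rho(\mc{R}_0)$.
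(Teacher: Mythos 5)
Your proposal is correct and takes essentially the same route as the paper: the paper likewise obtains the countable family from the $\mathbb{Z}_2$-symmetry of $\mc{F}$ for odd $p$, the (PS) condition of Lemma~\ref{leMPT12}, the mountain pass geometry of Lemma~\ref{leMPT22}, the infinite genus of $\mc{R}_0$ from Lemma~\ref{le37}, and the monotone minimax levels \eqref{cat} of Lemma~\ref{monoc}, delegating the equivariant deformation machinery to the symmetric critical point theory in \cite[Chap.~9]{Ra2}. If anything, your write-up is more explicit than the paper's, since you spell out the finite-dimensional coercivity condition (boundedness of $\{u\in E':\mc{F}(u)\ge 0\}$) and the deformation/compactness step that the paper handles by citation.
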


\begin{remark}
Thus, we find a countable family of critical points of the
functional $\mc{F}$ defined by \eqref{cat}
such that $c_\beta=\mc{F}(u_\beta)$ with $u_\beta$ a weak solution
of the problem \eqref{nonloc}. However, we cannot assure how many exactly since so far we only acknowledge the
existence of two solutions obtained through the Mountain Pass arguments performed in the previous section.
Hence, suppose $u_\b$ (in fact a critical point of \eqref{funn}) is the function on which
the subsequent infimum is achieved
\be
\label{infimum} \tex{\inf \mc{F} (u) \equiv \inf \Big(\frac{1}{2}
- \frac{1}{p+1}\int\limits_{\re^N}  |u|^{p+1}\Big) }, \quad
\hbox{with}\quad u_\b \in \mc{R}_{0},\ee having at least $\b$
critical points, in other words $\g(\mc{R}_0)\geq \b$. Now, let us
take a two hump structure (as done in \cite{GMPSob}) $$
 \hat{u}(x)=C[u_\b(x) +u_\b(x+a)],\quad C\in \re, $$
 with sufficiently large $|a|$,
If necessary, we also perform a slight modification of $\hat u(x)$
to have exponentially decay solutions at infinity.
 Thus, since $\hat{u}$ belongs to $\mc{R}_{0}$
 we have
   $$ c_{\hat{v}}=\mc{F} (\hat{v}) = 2
\mc{F} (u_\b)> \mc{F} (u_\b)=c_{u_\b}\equiv c_\b.$$
Observe that
 $$
 \tex{\int\limits_{\re^N} |\nabla u|^2 +
     \int\limits_{\re^N} |(-\D)^{-1/2}u|^2 >0.
 }
    $$
 Thus, for any $\hat{u}=M u_\b$ with $u_\b\in \mc{R}_{0}$, such
that we  have that
\be
\label{beta1}
 \tex{
\big(\int\limits_{\re^N} |\nabla \hat{u}|^2 +
     \int\limits_{\re^N} |(-\D)^{-1/2}\hat{u}|^2\big) =M^2>1,
 }
    \ee
and, hence,
$$
 \mc{F} (\hat{u}) > \mc{F} (u_\b),
$$
 meaning that, in the present case, a two-hump structure cannot be a
 L--S $\b$-solution. In particular for $\b=1$ or $\b=2$ we will have just one or two solutions.

 Actually the ones obtained in Section\;\ref{SM}, which are the only ones we know there existence explicitly.
 \end{remark}

 \section{Numerical analysis in 1D and in the radial geometry}
 \label{S5}

Considering radial geometry as discussed in section 2.1, (\ref{maineq}) takes the form
\begin{eqnarray}
 && u^{(4)} + \frac{2(N-1)}{r} u'''
    + \frac{2(N-1)(N-3)}{r^2} u'' - \frac{(N-1)(N-3)}{r^3} u'
     + u  \nonumber \\
 &&  \hskip 2cm   + p |u|^{p-3} u \left( u u''  + (p-1) \left( u' \right)^2
        + \frac{(N-1)}{r} u u' \right) = 0 .
\label{maineqrad}
\end{eqnarray}
This is considered together with the far-field asymptotic behaviour (\ref{inf1N}), written in the form
\begin{equation}
 \mbox{as $r \to \infty$} \hskip 1cm u \sim k_1 r^{-\frac{(N-1)}{2}} e^{-r/\sqrt{2}} \cos \left( \frac{r-k_2}{\sqrt{2}} \right),
\label{ffrad}
\end{equation}
where we conveniently use the constants $k_{1,2}$ in place of $C_{1,2}$, together with symmetry (\ref{bc1}) or anti-symmetry (\ref{bc111}) conditions at the origin. 

\begin{center}
\begin{figure}[htp]
 \hspace{-1.5cm}
\includegraphics[scale=0.3]{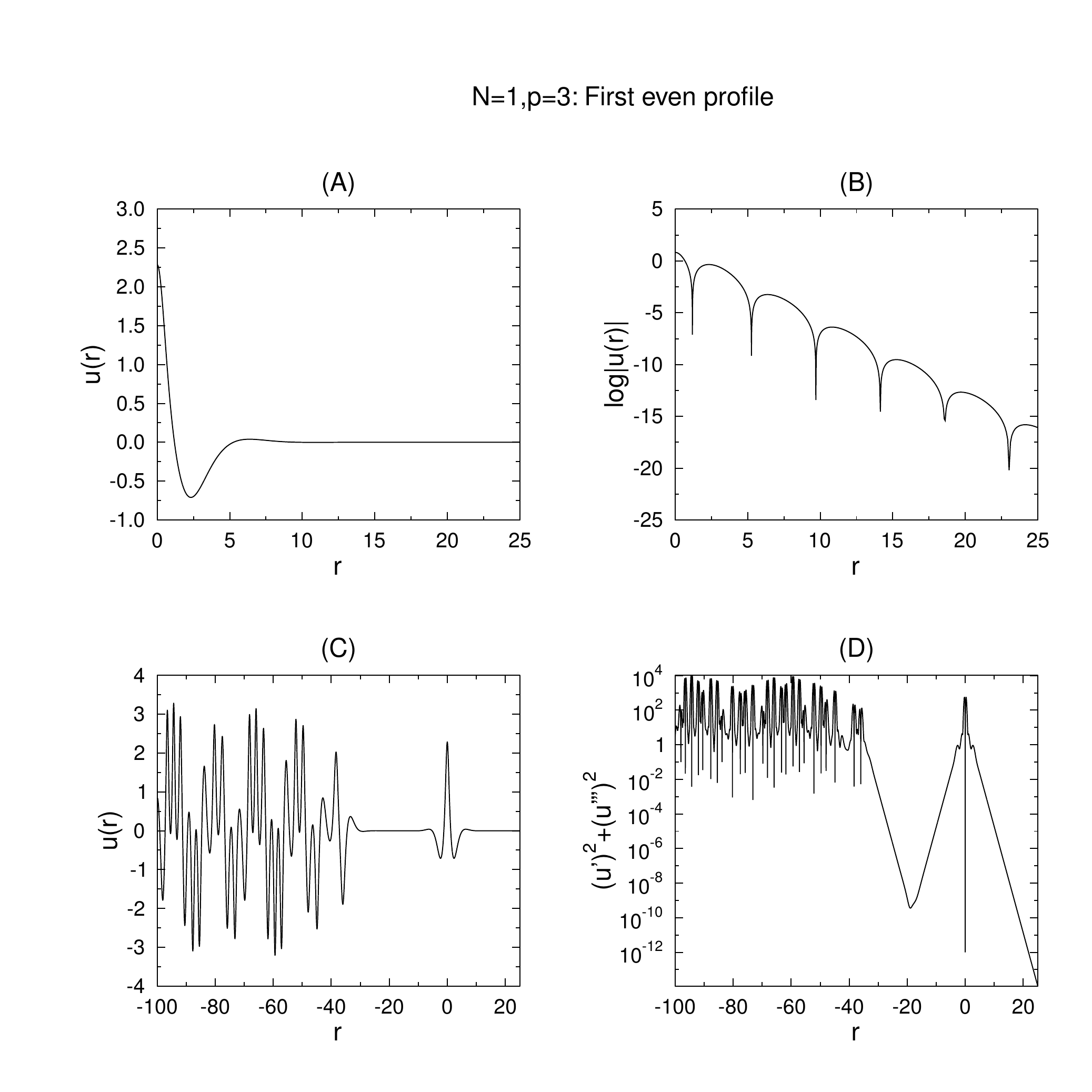}
\vskip 0cm \caption{ \small Numerical illustration of the first even profile for $N=1,p=3$. The profile is
shown in (A), with (B) showing the oscillatory exponentially decaying tail. (C) shows the profile over the extended domain for $r<0$ 
emphasizing the symmetry of the solution. (D) shows the quantity $u'^2+u'''^2$, which
quantifies the accuracy to which the symmetry conditions are satisfied at the origin.
} \label{Fig1}
\end{figure}
\end{center}

Using the far-field behaviour (\ref{ffrad}), a 2-D shooting problem may be formulated where the parameters $k_{1,2}$ are 
determined by satisfying the symmetry condition (\ref{bc1}) at the origin for the even profiles and anti-symmetry condition (\ref{bc111}) 
for the odd profiles. Matlab's ode15s solver is used with tight error tolerances (RelTol=AbsTol=$10^{-13}$).

\begin{center}
\begin{figure}[htp]
 \hspace{-1.5cm}
\includegraphics[scale=0.3]{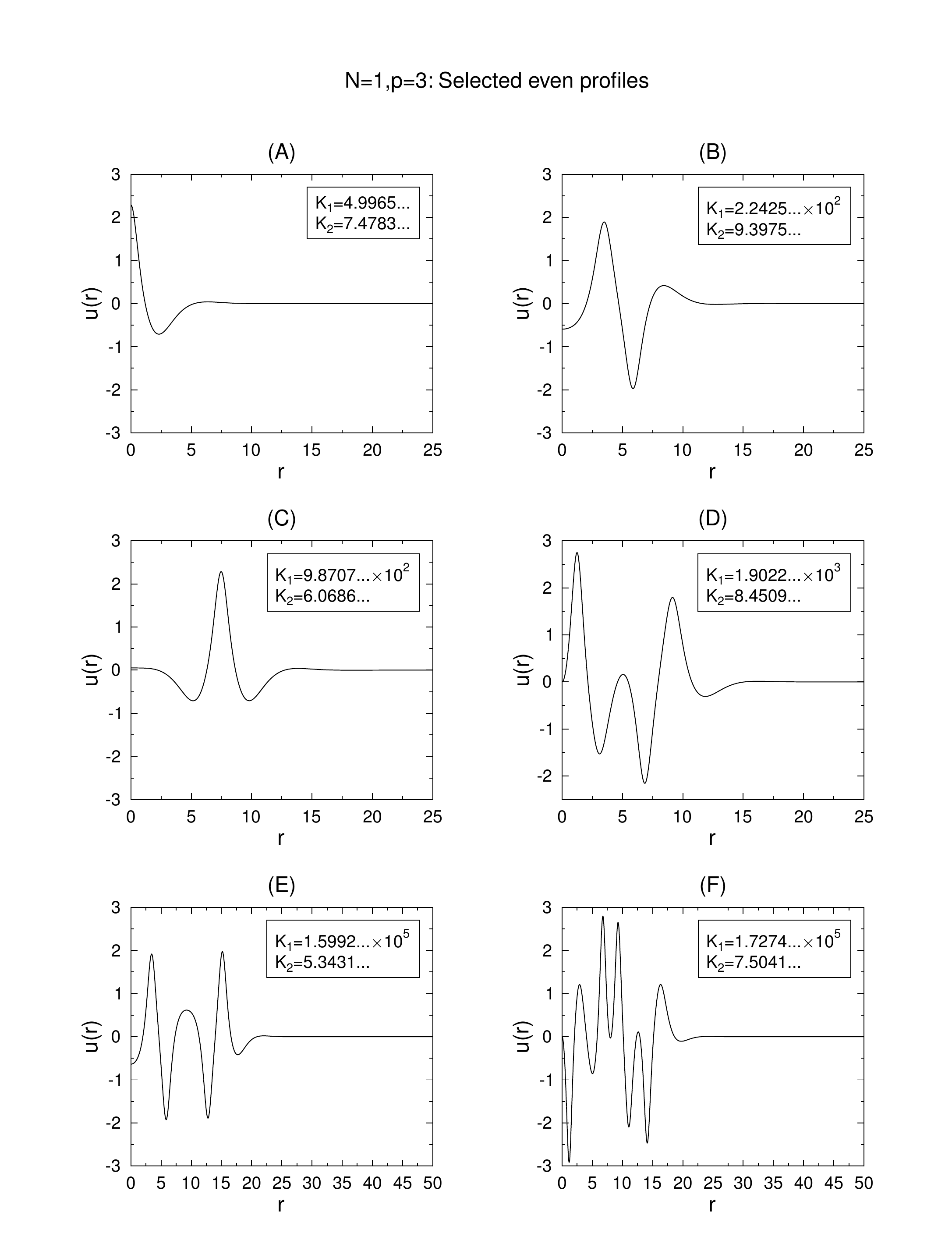}
\vskip 0cm \caption{ \small Illustration of even profiles for $N=1,p=3$.
} \label{Fig2}
\end{figure}
\end{center}

\begin{center}
\begin{figure}[htp]
 \hspace{-1.5cm}
\includegraphics[scale=0.4]{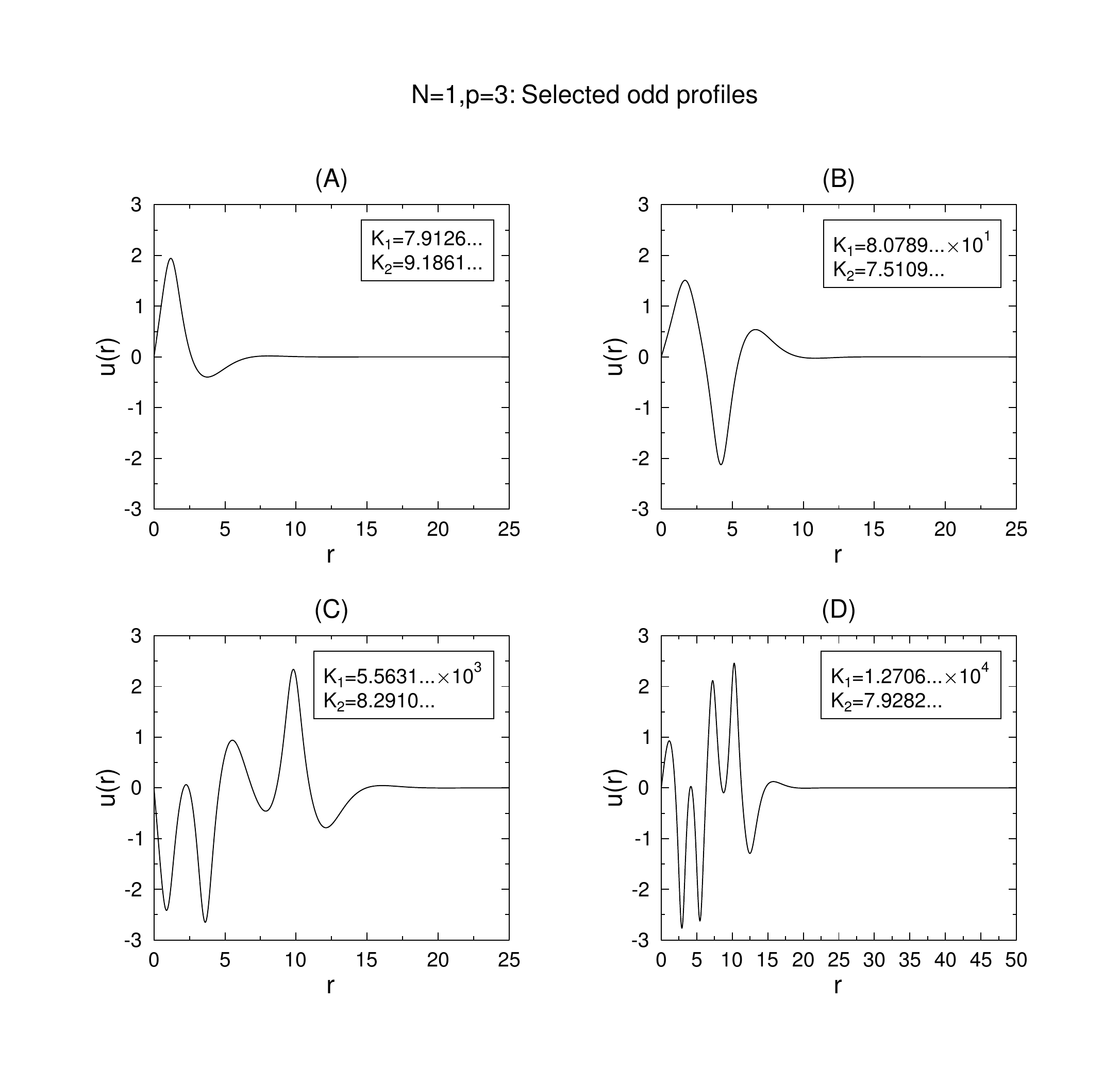}
\vskip 0cm \caption{ \small Numerical illustration odd profiles
for $N=1,p=3$.
} \label{Fig3}
\end{figure}
\end{center}

Figures \ref{Fig1}--\ref{Fig5} 
show illustrative even and odd profiles for $N=1$ and $p=2$ or $p=3$. The condition
(\ref{ffrad}) is used as initial data at sufficiently large $r$ (typically 25 or 50 as given by the domain in the plots).

\begin{center}
\begin{figure}[ht]
 \hspace{-1.5cm}
\includegraphics[scale=0.4]{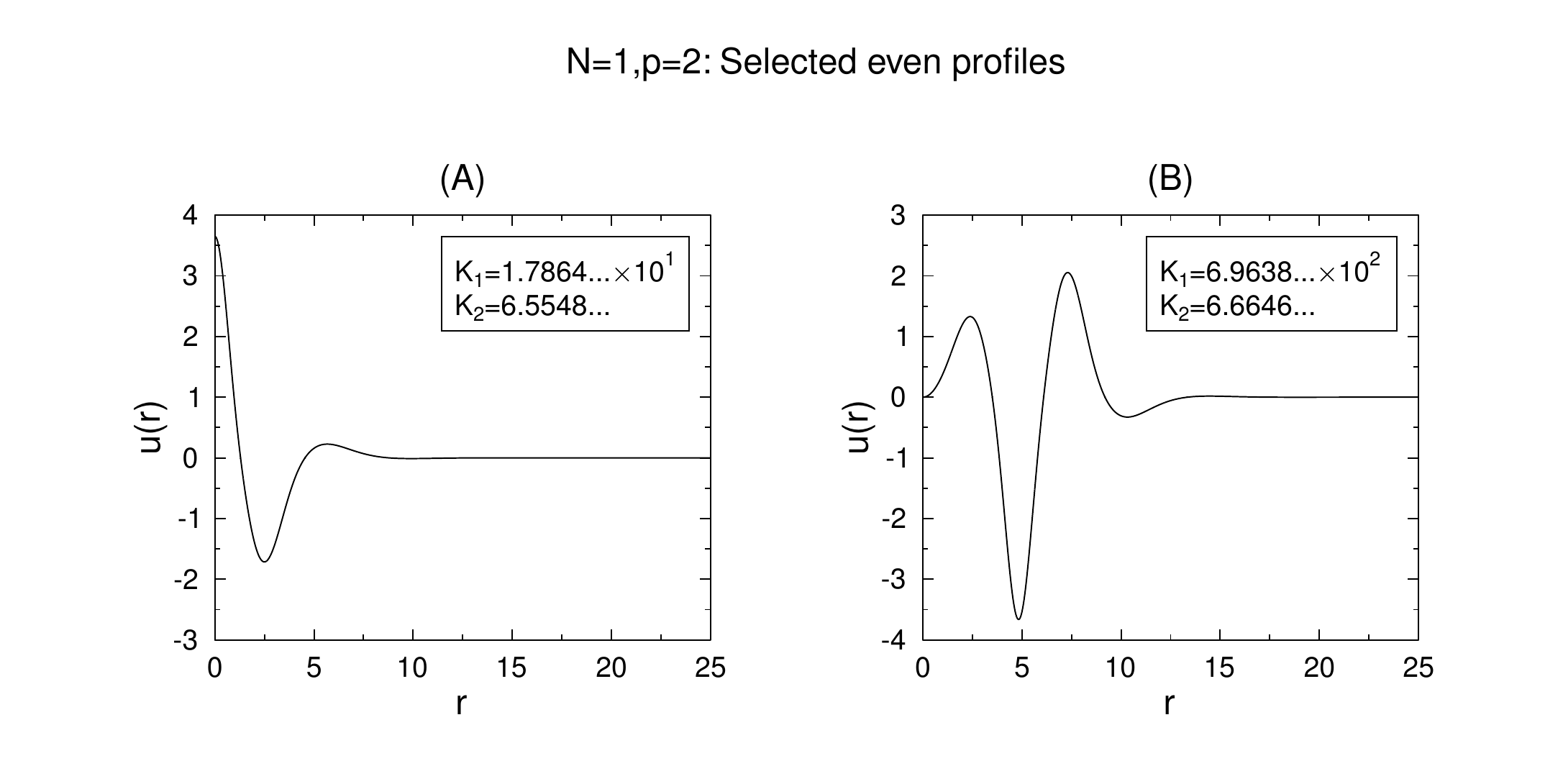}
\vskip 0cm \caption{ \small Numerical illustration of even profiles for $N=1,p=2$.
} \label{Fig4}
\end{figure}
\end{center}

\begin{center}
\begin{figure}[ht]
 \hspace{-1.5cm}
\includegraphics[scale=0.4]{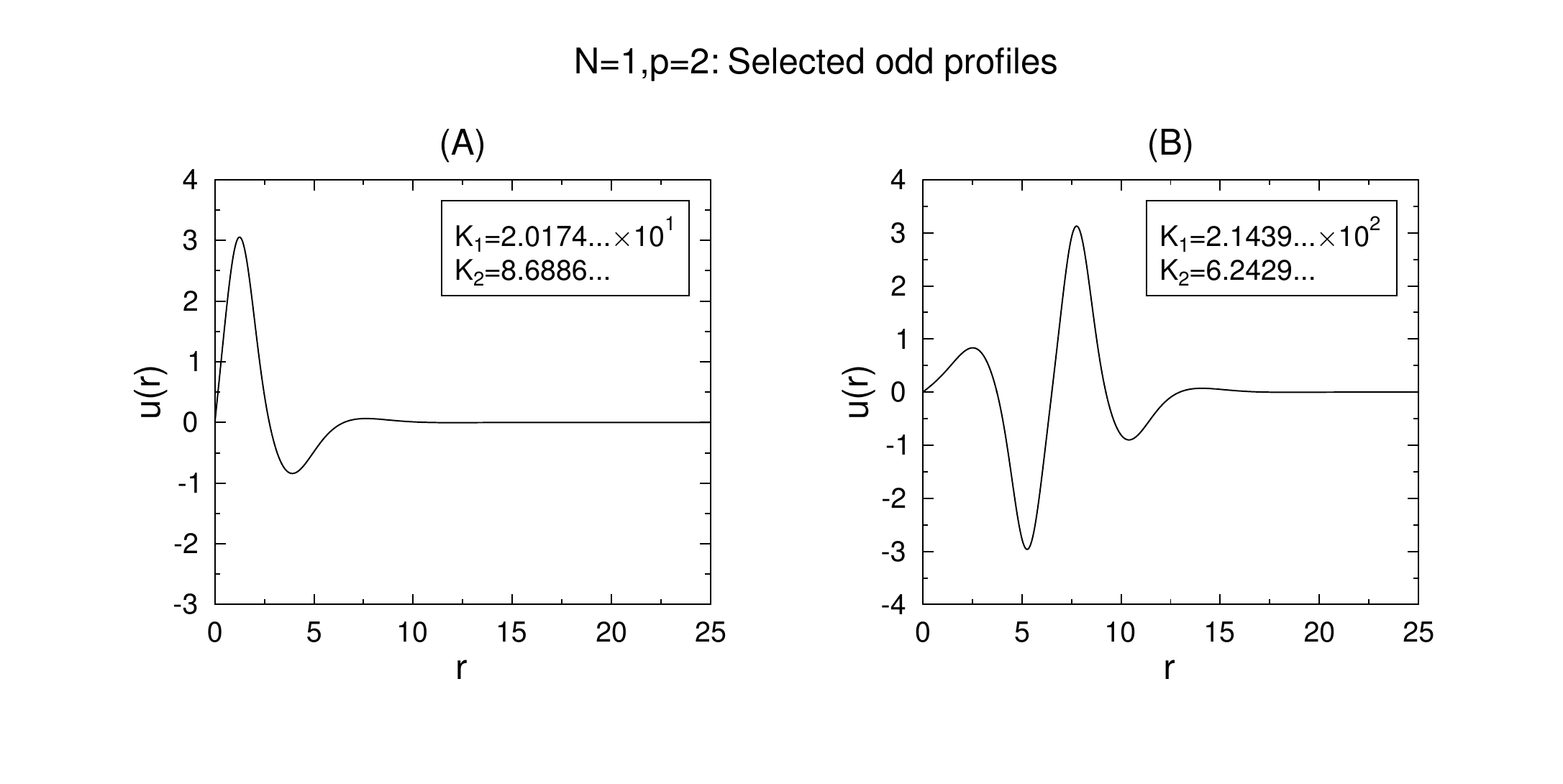}
\vskip 0cm \caption{ \small Numerical illustration of odd profiles for
$N=1,p=2$.
} \label{Fig5}
\end{figure}
\end{center}

We complete this numerical introduction with illustration of a
different class of solutions to (\ref{maineqrad}). 
We may perform
numerical experiments, shooting smoothly from $x=0$ with $u'(0)=u''(0)=u'''(0)=0$ and
varying $u(0)$. 

\begin{center}
\begin{figure}[ht]
 \hspace{-1.5cm}
\includegraphics[scale=0.4]{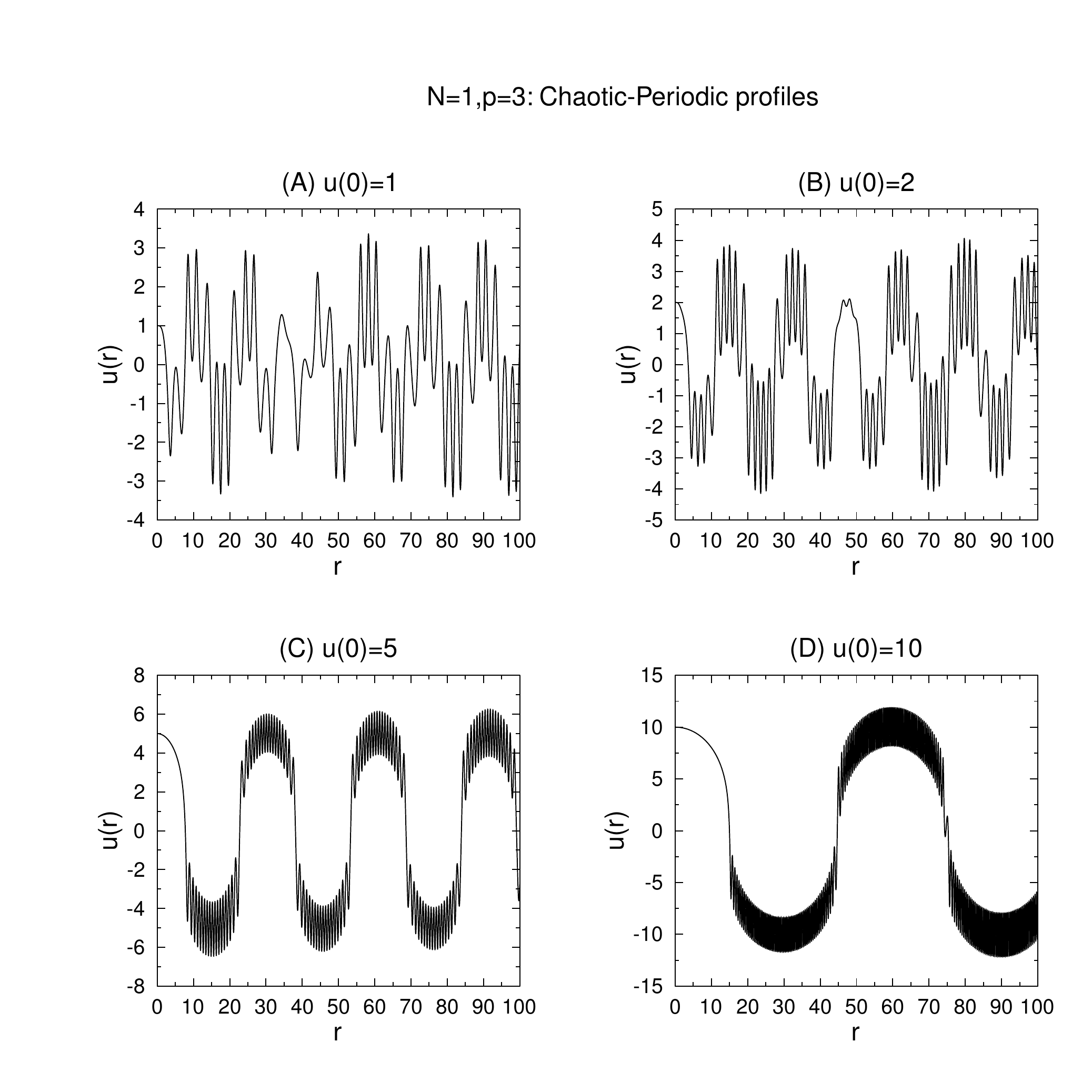}
\vskip 0cm \caption{ \small Numerical illustration of odd profiles for
$N=1,p=2$.
} \label{FigCP3}
\end{figure}
\end{center}

Shown in Figures \ref{FigCP3} and \ref{FigCP2} are selected profiles in one-dimension
$(N=1)$ in the illustrative parameter cases $p=3$ and $p=2$ respectively. For sufficiently small $u(0)$
seemingly chaotic patterns are obtained, which emerge into a more periodic structure as $u(0)$ increases.

\begin{center}
\begin{figure}[ht]
 \hspace{-1.5cm}
\includegraphics[scale=0.4]{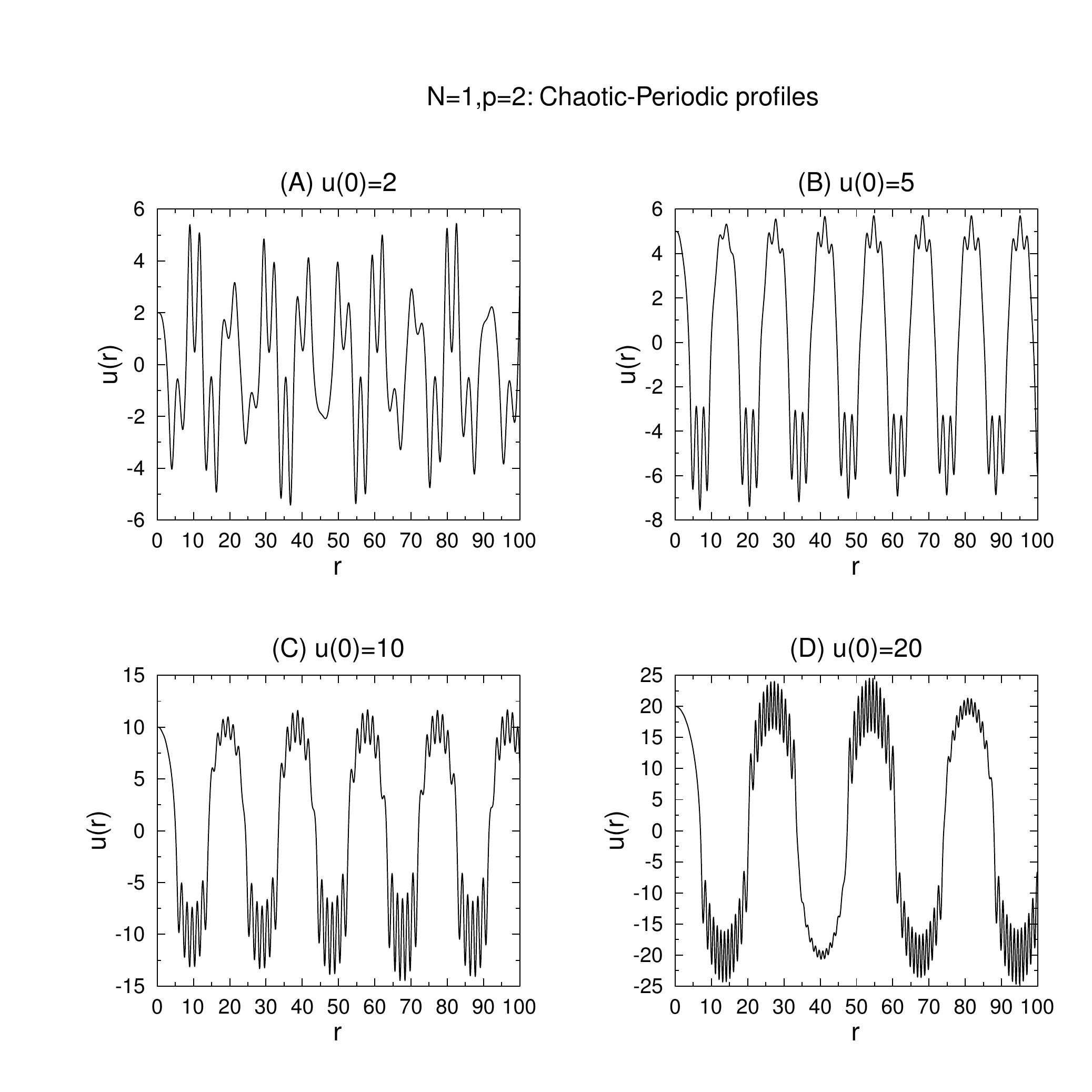}
\vskip 0cm \caption{ \small Numerical illustration of odd profiles for
$N=1,p=2$.
} \label{FigCP2}
\end{figure}
\end{center}

This 
emergence appears sooner for $p=3$ than $p=2$ when increasing the size of $u(0)$. Such transition behaviour is 
seen in similar phase solidification fourth-order equations, such as the Kuromoto-Sivashinsky and 
Swift-Hohenberg equations \cite{PE}, \cite{CG}, as a critical order parameter increases.



\end{document}